\newcommand{\R}{\ensuremath{\mathbb{R}}}
\newcommand{\D}{\ensuremath{\mathbb{D}}}
\newcommand{\C}{\ensuremath{\mathbb{C}}}
\newcommand{\bea}{\begin{eqnarray*}}
\newcommand{\eea}{\end{eqnarray*}}
\newcommand{\sumprime}{\if@display\sideset{}{'}\sum%
            \else\sum'\fi}
\begin{document}

\numberwithin{equation}{section}

\newtheorem{theorem}{Theorem}[section]
\newtheorem{proposition}[theorem]{Proposition}
\newtheorem{conjecture}[theorem]{Conjecture}
\def\theconjecture{\unskip}
\newtheorem{corollary}[theorem]{Corollary}
\newtheorem{lemma}[theorem]{Lemma}
\newtheorem{observation}[theorem]{Observation}
\newtheorem{definition}{Definition}
\numberwithin{definition}{section} 
\newtheorem{remark}{Remark}
\def\theremark{\unskip}
\newtheorem{kl}{Key Lemma}
\def\thekl{\unskip}
\newtheorem{question}{Question}
\def\thequestion{\unskip}
\newtheorem{example}{Example}
\def\theexample{\unskip}
\newtheorem{problem}{Problem}

\thanks{The first author was supported by National Natural Science Foundation of China,  No. 12271101. The  third author was supported by Natural Science Foundation of Guangdong Province,  No.  2023A1515030017  and  the Fundamental Research Funds for the Central Universities,  Sun Yat-sen University}

\address [Bo-Yong Chen] {Department of Mathematical Sciences, Fudan University, Shanghai, 200433, China}
\email{boychen@fudan.edu.cn}

\address [John Erik Forn\ae ss]  { Department of Mathematical Sciences, Norwegian University of Science and Technology(NTNU), Sentralbygg 2, Alfred Getz vei 1, 7034 Trondheim, Norway} 
\email{john.fornass@ntnu.no}

\address  [Jujie Wu ]{School of Mathematics (Zhuhai), Sun Yat-Sen University, Zhuhai, Guangdong 519082, P. R.
 	China}
\email{wujj86@mail.sysu.edu.cn}

\title{Density in weighted Bergman spaces and Bergman completeness of Hartogs domains}
\author{Bo-Yong Chen, John Erik Forn\ae ss and Jujie Wu}

\date{}

\begin{abstract}
   We study the density of functions which are holomorphic in a neighbourhood of the closure $\overline{\Omega}$ of a bounded non-smooth pseudoconvex domain $\Omega$,  in the Bergman space $ H^2(\Omega ,\varphi)$ with a plurisubharmonic weight $\varphi$.  As an application,  we show that the Hartogs domain 
 $$
\Omega _\alpha : = \{(z,w) \in D\times \C:  |w|< \delta^\alpha_D(z)   \}, \ \ \  \alpha>0,
$$
where $D\subset \subset \C$ and $\delta_D$ denotes the boundary distance, is Bergman complete if and only if  every boundary point of $D$
 is non-isolated.

  \bigskip

  \noindent{{\sc Mathematics Subject Classification} (2020):  32A10,  32A36,  32E30,  32U05.}
  
  \smallskip
  
  \noindent{{\sc Keywords}: Weighted Bergman space,  pseudoconvex domain, plurisubharmonic function, Bergman completeness. } 
\end{abstract}

\maketitle

\tableofcontents

\section{Introduction}

Let $\Omega\subset\mathbb C^n$ be a bounded domain and $\varphi$ a measurable function in $\Omega$. We denote by $L^2(\Omega ,\varphi)$ the space of measurable functions $f$ in $\Omega $ such that   
$$
\|f\|_{L^2(\Omega, \varphi)} ^2 : = \int_{\Omega } |f|^2 e^{-\varphi} < + \infty.
$$
Here and in what follows,  we omit the Lebesgue measure $d\lambda$ in the integrals.
Let $$H^2(\Omega,\varphi) = \mathcal{O}(\Omega) \cap L^2(\Omega ,\varphi),$$   where $ \mathcal{O}(\Omega)$ is the set of holomorphic functions in $\Omega$.  It is of classical interest to consider the problem of finding conditions which ensure that $\mathcal O(\overline{\Omega}) \cap H^2(\Omega, \varphi)$ is dense in $H^2(\Omega, \varphi)$,  where $\mathcal O(\overline{\Omega})$ denotes the set of holomorphic functions in a neighbourhood of  $\overline{\Omega}$.  In case $n=1$ and $ \varphi =0$,  this problem has been investigated extensively (see \cite{Carleman}, \cite{Havin}, \cite{Hedberg});  for $\varphi \neq 0$,  one may refer to \cite{Taylor1971},  \cite{Hedberg1965},  \cite{BFW2020},  \cite{BFW2021}, etc.  However, for $n>1$, only a few results are known.  
 It is   known that the $L^2$-approximation does not hold when $\Omega$ is the Hartogs triangle (see e.g., \cite{Shaw2015}).  Note that the closure of the Hartogs triangle does not admit a Stein neighbourhood basis. It is then natural to ask the following.

\begin{problem} \label{problem1}
	Suppose $\overline{\Omega}$ admits a Stein neighbourhood basis and 
	$\varphi$ is a plurisubharmonic (psh) function in  $ \Omega $.  Under what conditions is it possible to conclude that  $\mathcal O(\overline{\Omega})\cap H^2(\Omega, \varphi)$ is dense in  $H^2(\Omega, \varphi)$? 
\end{problem}

The answer of Problem $1$ is affirmative when $\varphi=0$ if one imposes some regularity hypothesis on the boundary $\partial \Omega$, e.g., when $\partial \Omega$ can be described locally as the graph of a continuous function  (see \cite{Chen2000-1}).

In this paper, we first  show the following. 
\begin{theorem}\label{th:moregeneral}
 Let  $\Omega$ be a bounded pseudoconvex domain in $\C^n$ and $\varphi$  a psh function in a neighbourhood of $\overline{\Omega}$.   Suppose there exists a Stein neighbourhood basis  $\{\Omega^t\}_{0<t\le t_0}$ of\/  $\overline{\Omega}$\/ such that  
 $$
 \eta(t)\le  \delta_{\Omega^t}(z) : = d (z, \partial \Omega^t) \le  t,  \  \forall\,z\in \partial \Omega, 
$$  
where $\eta (t)$ is a non-negative continuous  increasing  function about $t$ satisfying
 \begin{eqnarray} \label{eq:condition}
\int_0^{r_0} \frac{dt}{t\log [t/\eta(t)]}=\infty,
\end{eqnarray}
for some $r_0\ll 1$.
Then $\mathcal O(\overline{\Omega}) \cap H^2(\Omega, \varphi) $ is dense in $H^2(\Omega, \varphi)$.
\end{theorem}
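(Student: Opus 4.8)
The plan is to combine the standard cut-off-and-correct scheme with a weighted $L^{2}$ estimate for $\dbar$ on the neighbourhoods $\Omega^{t}$; the Stein neighbourhood basis produces the functions holomorphic past $\overline\Omega$, and the Dini condition \eqref{eq:condition} enters in the construction of the weight.

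\emph{Step 1 (reduction).} Since $\{\Omega^{t}\}$ is a neighbourhood basis of $\overline\Omega$, every $g\in\mathcal O(\overline\Omega)$ lies in some $\mathcal O(\Omega^{t})$, so $\mathcal O(\overline\Omega)\cap H^{2}(\Omega,\varphi)=\bigcup_{0<t\le t_{0}}\bigl(\mathcal O(\Omega^{t})\cap H^{2}(\Omega,\varphi)\bigr)$, and it suffices to approximate a given $f\in H^{2}(\Omega,\varphi)$, within a prescribed $\varepsilon$ in $\|\cdot\|_{L^{2}(\Omega,\varphi)}$, by some $g\in\mathcal O(\Omega^{t})$. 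I would choose a cut-off $\chi=\chi_{t}\in C^{\infty}$ with $\overline{\operatorname{supp}\chi}$ a compact subset of $\Omega$ (so that $\chi f$, extended by $0$, lies in $C_{c}^{\infty}(\Omega^{t})$) and $\chi\equiv1$ off a collar of $\partial\Omega$ that shrinks as $t\to0$; because $\delta_{\Omega}\le\delta_{\Omega^{t}}\le\delta_{\Omega}+t$ on $\Omega$ and $\delta_{\Omega^{t}}\le t$ on $\partial\Omega$, both features can be arranged by letting $\chi$ be built from $\delta_{\Omega^{t}}$ (which already kills a neighbourhood of $\partial\Omega$) together with an auxiliary cut-off in $\delta_\Omega$ to confine the support to $\Omega$. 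Solving $\dbar u=\dbar(\chi f)=f\,\dbar\chi$ on the pseudoconvex domain $\Omega^{t}$ and putting $g:=\chi f-u\in\mathcal O(\Omega^{t})$ gives $\|f-g\|_{L^{2}(\Omega,\varphi)}\le\|(1-\chi)f\|_{L^{2}(\Omega,\varphi)}+\|u\|_{L^{2}(\Omega,\varphi)}$; the first term $\to0$ by dominated convergence, using that $e^{-\varphi}$ is bounded below on the compact set $\overline\Omega$ since $\varphi$ is psh (hence upper semicontinuous) near $\overline\Omega$. So everything reduces to solving $\dbar u=f\,\dbar\chi$ on $\Omega^{t}$, for a suitable $\chi$, with $\|u\|_{L^{2}(\Omega,\varphi)}$ small.

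\emph{Step 2 (the weighted $\dbar$-estimate).} I would solve $\dbar u=f\,\dbar\chi$ by a weighted $L^{2}$ estimate on $\Omega^{t}$ — Hörmander's estimate, or a Donnelly--Fefferman/Berndtsson-twisted version chosen so the output norm is again $\int|\cdot|^{2}e^{-\varphi}$ and nothing is lost in the conversion back to the $\varphi$-norm — fed an auxiliary psh weight on $\Omega^{t}$ whose complex Hessian is large precisely on the collar $\{\dbar\chi\ne0\}$ and in the direction of $\dbar\chi$. To match the geometry I would take $\chi$ a function of $-\log\delta_{\Omega^{t}}$ (so $\dbar\chi$ is a scalar multiple of $\dbar(-\log\delta_{\Omega^{t}})$), spread \emph{logarithmically} over its transition, and the auxiliary weight of the form $\varphi+2\gamma_{t}(-\log\delta_{\Omega^{t}})$ with $\gamma_{t}$ convex increasing (so $-\log\delta_{\Omega^{t}}$ being psh on $\Omega^{t}$ makes the weight psh there), choosing the profiles of $\chi$ and of $\gamma_{t}$ compatibly so that $i\partial\dbar(\text{weight})\gtrsim\gamma_{t}''\,i\partial(-\log\delta_{\Omega^{t}})\wedge\dbar(-\log\delta_{\Omega^{t}})$ dominates the coefficient of $\dbar\chi$. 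A co-area reduction along the level sets of $\delta_{\Omega^{t}}$ then bounds $\int_{\Omega}|u|^{2}e^{-\varphi}$ by an integral $\int K_{t}(s)\,\mu(s)\,ds$ with $\mu\ge0$, $\int\mu\le\|f\|^{2}_{L^{2}(\Omega,\varphi)}$, where $K_{t}$ is an explicit function of the two profiles and of the available curvature.

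\emph{Step 3 (where \eqref{eq:condition} enters; the obstacle).} A single collar cannot be made to work — when $\log(t/\eta(t))$ stays bounded the curvature one can afford is too weak, yet \eqref{eq:condition} still holds — so one must spread the cut-off and the weight over all scales approaching $\partial\Omega$ at once: using that $\int_{0}^{r_{0}}\frac{dt}{t\log(t/\eta(t))}=\infty$, pick $r_{0}\ge t_{1}>t_{2}>\cdots\to0$ with $\int_{t_{k+1}}^{t_{k}}\frac{dt}{t\log(t/\eta(t))}=1$, let $\chi=\chi_{N}$ drop from $1$ to $0$ over the range determined by $t_{1},\dots,t_{N}$, and distribute its unit decrement as $\sum_{k\le N}a_{k}=1$, so that the $k$-th block contributes roughly $a_{k}^{2}\,\log(t_{k}/\eta(t_{k}))\,\|f\|^{2}$ to $\int_{\Omega}|u|^{2}e^{-\varphi}$; minimising under $\sum a_{k}=1$ yields a total error $\asymp\|f\|^{2}\big/\sum_{k\le N}\bigl(\log(t_{k}/\eta(t_{k}))\bigr)^{-1}$, which tends to $0$ as $N\to\infty$ exactly because $\sum_{k}\bigl(\log(t_{k}/\eta(t_{k}))\bigr)^{-1}$ diverges, and this divergence is the arithmetic content of \eqref{eq:condition}. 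The main difficulty, where the real work lies, is to carry out Step 2 honestly: to exhibit a single weight on $\Omega^{t_{N}}$ (equivalently a negative psh function, for the twisted estimate) that is simultaneously large enough on every block of the collar, in the right direction, to absorb $|\dbar\chi|^{2}$, and mild enough away from the collar — in particular in the shrinking collar of $\partial\Omega$, where $\chi$ and $u$ are supported but the weight is forced to be large — not to destroy the passage back to the $\varphi$-norm; reconciling these competing demands is precisely what forces \eqref{eq:condition}, and along the way one must regularise the Lipschitz functions $\delta_{\Omega},\delta_{\Omega^{t}}$ and control their discrepancy on the collar via $\delta_{\Omega}\le\delta_{\Omega^{t}}\le\delta_{\Omega}+t$. (An alternative packaging of the same mechanism: an $h\in H^{2}(\Omega,\varphi)$ annihilating $\mathcal O(\overline\Omega)\cap H^{2}(\Omega,\varphi)$ satisfies $P_{\Omega^{t},\varphi}(h\,\mathbf 1_{\Omega})=0$ for every $t$, and one concludes $h=0$ from the convergence $K_{\Omega^{t},\varphi}\to K_{\Omega,\varphi}$ of Bergman kernels, whose validity under \eqref{eq:condition} is proved by the same $\dbar$-input.)
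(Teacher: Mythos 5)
Your overall architecture (cut off, solve $\dbar$ on $\Omega^t$ with a Donnelly--Fefferman-type twist, read \eqref{eq:condition} as divergence of $\sum_k 1/\log(t_k/\eta(t_k))$ over dyadic-type blocks) points in the right direction, and that harmonic sum is indeed exactly the arithmetic the paper extracts from \eqref{eq:condition}. But the step you explicitly defer --- producing one psh weight on $\Omega^{t}$ whose curvature absorbs $|\dbar\chi|^2$ on every block of the collar while the support of $\dbar\chi$ is kept inside $\Omega$ and the passage back to the $\varphi$-norm is not destroyed --- is not a technicality to be filled in later; it is the substance of the theorem, and as set up it does not close. Two concrete problems. (i) Since $f$ is defined only on $\Omega$, your cut-off must vanish outside a compact subset of $\Omega$, which forces the auxiliary factor in $\delta_\Omega$ you mention; but the complex Hessian of any weight built from $\delta_{\Omega^t}$ gives no control of $\dbar$ of that factor, so the twisted estimate does not apply to the full $\dbar\chi$. (ii) The per-block cost ``$a_k^2\log(t_k/\eta(t_k))$'' is asserted rather than derived; with the natural bounded twist $-\log\log(1/\delta_{\Omega^t})$ the cost of the block $\{\eta(t_k)\le\delta_{\Omega^{t_k}}\le t_k\}$ is governed by $\log\bigl[\log(1/\eta(t_k))/\log(1/t_k)\bigr]$, which is a genuinely different quantity, and the resulting optimization does not reproduce \eqref{eq:condition}.

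The paper resolves both difficulties at once by abandoning distance-based weights in favour of the relative extremal function $\varrho^*_{t'}$ of a fixed closed ball $\overline B\subset\Omega$ relative to $\Omega^{t'}$: this single negative continuous psh function on $\Omega^{t'}$ has sublevel sets $\{\varrho^*_{t'}<-\varepsilon_t\}$ automatically contained in $\Omega$, so the cut-off can be taken as a function of $\varrho^*_{t'}$ alone and its own logarithm supplies the twist, while the multi-scale bookkeeping is carried out once and for all in Proposition \ref{prop:ChenHolder} (from \cite{ChenHolder}), applied with the test functions $\psi_t=\log(1/\delta_{\Omega^{\varepsilon t}})/\log(2/\eta(\varepsilon t))$. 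This yields
$$
\varepsilon_t:=\sup_{\delta_{\Omega^{t'}}\le t}\bigl(-\varrho^*_{t'}\bigr)\le\exp\left(-\tau\int_{t/\alpha}^{r_0}\frac{ds}{s\log[s/\eta(\varepsilon s)]}\right)\longrightarrow 0
$$
precisely under \eqref{eq:condition}, after which the density follows from the general criterion of Proposition \ref{le:volumeestimate}, since $\{\varrho^*_{t'}<-\varepsilon_t\}\subset\subset\Omega$ and $|\Omega\setminus\{\varrho^*_{t'}<-2\varepsilon_t\}|\to0$ because $\varrho^*$ is a continuous exhaustion of $\Omega$. To complete your argument you would need either to import Proposition \ref{prop:ChenHolder} (or reprove its scale-by-scale iteration), or to construct your all-scales weight explicitly, which amounts to the same work. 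Finally, your parenthetical ``alternative packaging'' via $K_{\Omega^t,\varphi}\to K_{\Omega,\varphi}$ is the route the paper takes in Theorem \ref{th:Main2}, but there it requires the additional boundary-decay hypothesis on $\nu_E$; it is not known to work under \eqref{eq:condition} alone.
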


\begin{remark}
The special case $\varphi = 0$ and  $\eta(t) = Ct^\alpha$,  $C>0,  \alpha >1$  has been verified in  \cite{Chen99}.
 
\end{remark}

As an application, we deduce the following.
\begin{corollary} \label{co:co1}
	Suppose $\Omega \subset \C^n$ satisfies conditions of Theorem \ref{th:moregeneral} and $\overline{\Omega}$ is polynomially convex.  Let $\varphi$ be a psh function in a neighbourhood of $\overline{\Omega}$ with $1 \in H^2(\Omega, \varphi)$.   Then polynomials are dense in $H^2(\Omega, \varphi)$.	
\end{corollary}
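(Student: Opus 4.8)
The plan is to reduce the statement to Theorem \ref{th:moregeneral} and then feed in the Oka--Weil approximation theorem. As a preliminary remark, note that since $\varphi$ is psh, hence upper semicontinuous, on a neighbourhood of the compact set $\overline{\Omega}$, every $f\in\mathcal O(\overline{\Omega})$ is bounded on $\overline{\Omega}$, so
\[
\|f\|_{L^2(\Omega,\varphi)}^2=\int_\Omega|f|^2e^{-\varphi}\le\Big(\sup_{\overline{\Omega}}|f|\Big)^2\int_\Omega e^{-\varphi}=\Big(\sup_{\overline{\Omega}}|f|\Big)^2\,\|1\|_{L^2(\Omega,\varphi)}^2<\infty,
\]
where finiteness uses the hypothesis $1\in H^2(\Omega,\varphi)$. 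Thus $\mathcal O(\overline{\Omega})$, restricted to $\Omega$, is contained in $H^2(\Omega,\varphi)$; in particular $\mathcal O(\overline{\Omega})\cap H^2(\Omega,\varphi)=\mathcal O(\overline{\Omega})$.

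Next I would invoke Theorem \ref{th:moregeneral}: since $\Omega$ satisfies its hypotheses and $\varphi$ is psh near $\overline{\Omega}$, the space $\mathcal O(\overline{\Omega})\cap H^2(\Omega,\varphi)$ is dense in $H^2(\Omega,\varphi)$. Hence it is enough to approximate an arbitrary $f\in\mathcal O(\overline{\Omega})$ by polynomials in the $H^2(\Omega,\varphi)$ norm. Because $\overline{\Omega}$ is polynomially convex and $f$ is holomorphic on some neighbourhood of $\overline{\Omega}$, the Oka--Weil theorem yields polynomials $p_j$ with $\sup_{\overline{\Omega}}|f-p_j|\to 0$. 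Applying the elementary estimate above to $f-p_j$ in place of $f$ gives
\[
\|f-p_j\|_{L^2(\Omega,\varphi)}^2\le\Big(\sup_{\overline{\Omega}}|f-p_j|\Big)^2\,\|1\|_{L^2(\Omega,\varphi)}^2\longrightarrow 0,
\]
so $p_j\to f$ in $H^2(\Omega,\varphi)$. Composing the two approximation steps shows that polynomials are dense in $H^2(\Omega,\varphi)$.

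There is no genuine obstacle in this argument: the substantive content is carried entirely by Theorem \ref{th:moregeneral}, while the remaining ingredients are the classical Oka--Weil theorem and the trivial observation that the assumption $1\in H^2(\Omega,\varphi)$ makes uniform convergence on $\overline{\Omega}$ automatically upgrade to convergence in $H^2(\Omega,\varphi)$. The only points one must be slightly careful about are that functions in $\mathcal O(\overline{\Omega})$ really do lie in $H^2(\Omega,\varphi)$ (needed so that the conclusion of Theorem \ref{th:moregeneral} is applicable to them) and that one quantifies the passage from the sup-norm to the weighted $L^2$-norm uniformly in $j$; both are handled by the single displayed estimate.
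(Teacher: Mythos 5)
Your proposal is correct and follows essentially the same route as the paper: apply Theorem \ref{th:moregeneral} to approximate $f$ by some $F\in\mathcal O(\overline{\Omega})$ in the weighted $L^2$ norm, then use Oka--Weil together with the estimate $\int_\Omega|F-P|^2e^{-\varphi}\le\bigl(\sup_{\overline{\Omega}}|F-P|\bigr)^2\int_\Omega e^{-\varphi}$, whose finiteness is exactly the hypothesis $1\in H^2(\Omega,\varphi)$. Your preliminary remark that $\mathcal O(\overline{\Omega})\subset H^2(\Omega,\varphi)$ is a harmless (and slightly more careful) addition that the paper leaves implicit.
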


If one poses certain additional assumption inside $\Omega$, then  the integral in $(\ref{eq:condition})$ can be finite.

\begin{theorem} \label{cor:Main22}
Let  $\Omega$ be a bounded pseudoconvex domain  in $\C^n$ and $\varphi$  a psh function in a neighbourhood of $\overline{\Omega}$.    Suppose
\begin{enumerate}  
	
	\item 	$\Omega$ admits a negative continuous psh function $\rho$   such that 
	$-\rho\lesssim  \delta_\Omega^\alpha$;
	
	\item  there exists a Stein neighbourhood basis  $\{\Omega^t\}_{0<t\le t_0}$ of\/  $\overline{\Omega}$\/ such that  
\begin{eqnarray}
	C e^{-C_1/t^\beta}  \le  \delta_{\Omega^t}(z)\le  t,  \  \forall\,z\in \partial \Omega, 
\end{eqnarray}
	 for some $\beta<\alpha/2$ and $C, C_1>0$.
	\end{enumerate}
Then $\mathcal O(\overline{\Omega})\cap H^2(\Omega, \varphi)$ is dense in $ H^2(\Omega, \varphi)$.
\end{theorem}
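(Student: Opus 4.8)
The plan is to run the approximation scheme behind Theorem~\ref{th:moregeneral}. It suffices to show that every $f\in H^2(\Omega,\varphi)$ is approximated in $L^2(\Omega,\varphi)$ by functions holomorphic on some $\Omega^t$, since any such function lies in $\mathcal O(\overline\Omega)$ (as $\Omega^t$ is Stein and contains $\overline\Omega$). One cannot simply invoke Theorem~\ref{th:moregeneral}: with $\eta(t)=Ce^{-C_1/t^\beta}$ one has $\log[t/\eta(t)]\sim C_1t^{-\beta}$, so the integral in \eqref{eq:condition} converges. The content of the proof is that the Hölder-type bound $-\rho\lesssim\delta_\Omega^\alpha$ of hypothesis~(1) can be fed into the underlying weighted $\overline\partial$-estimate in place of \eqref{eq:condition}, and that the relation between the exponents needed for the two hypotheses to cooperate is precisely $\beta<\alpha/2$.

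In detail, hypothesis~(1) makes $\rho$ a negative continuous psh exhaustion, so $\Omega$ is hyperconvex, and one reduces first to $f$ bounded — whence $|\nabla f|\lesssim\delta_\Omega^{-1}$ and higher derivatives of $f$ are Cauchy-controlled near $\partial\Omega$. Fix $t$, a scale $s=s(t)$ to be calibrated, and a smoothing $\rho_s$ of $\rho$ with smooth sublevel sets; take an almost-holomorphic extension $\tilde f$ of $f|_{\{\rho_s<-s\}}$ across $\{\rho_s=-s\}$ into $\Omega^t$, and a cut-off $\chi_t\in C_c^\infty(\Omega^t)$ equal to $1$ near $\overline\Omega$ with $|\overline\partial\chi_t|\lesssim\eta(t)^{-1}$ on the collar $\Omega^t\setminus\overline\Omega$ (using the lower bound in hypothesis~(2)). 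Then $\alpha:=\overline\partial(\chi_t\tilde f)$ is an $L^2$ $(0,1)$-form supported in $(\Omega^t\setminus\overline\Omega)\cup\{-s\le\rho_s<0\}$, and putting $g:=\chi_t\tilde f-v\in\mathcal O(\Omega^t)$ for a solution $v$ of $\overline\partial v=\alpha$ on the pseudoconvex domain $\Omega^t$ gives $\|f-g\|_{L^2(\Omega,\varphi)}\le\|v\|_{L^2(\Omega,\varphi)}+o(1)$ as $s\to0$ (boundedness of $f$ controls $\tilde f-f$ on the thin inner layer). Everything thus reduces to solving $\overline\partial v=\alpha$ with $\|v\|_{L^2(\Omega,\varphi)}$ small.

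This last step is where hypothesis~(1) enters. In the proof of Theorem~\ref{th:moregeneral} the smallness of $v$ is obtained by solving $\overline\partial v=\alpha$ against a psh weight (a ``shield'') on $\Omega^t$ which is bounded on $\Omega$ but of size $\sim\log(1/\eta(t))$ on the collar — a weight that exists exactly under \eqref{eq:condition}. Here one builds the weight from $\rho$ instead: the function $\psi:=-c\log(-\rho)$ with $0<c\le1$ is psh on $\Omega$, satisfies the Donnelly--Fefferman inequality $i\partial\overline\partial\psi\ge\tfrac1c\,i\partial\psi\wedge\overline\partial\psi$, and obeys $e^{-\psi}=(-\rho)^c\lesssim\delta_\Omega^{c\alpha}$, so it contributes extra weight decaying like a fixed power of $\delta_\Omega$ precisely where the bad part of $\alpha$ is concentrated. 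Solving $\overline\partial v=\alpha$ with (a version of) this weight, matched to the collar so as to also absorb the $\eta(t)^{-1}$-growth of $\alpha$ there, yields $\|v\|_{L^2(\Omega,\varphi)}\to0$; the exponent bookkeeping — the power $\delta_\Omega^{c\alpha}\approx(Ce^{-C_1/t^\beta})^{c\alpha}$ available at the scale where the inner layer meets the collar, pitted against $\eta(t)^{-1}=C^{-1}e^{C_1/t^\beta}$, with a Cauchy--Schwarz step that effectively halves the useful exponent — is exactly what forces $\beta<\alpha/2$.

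The main obstacle is the construction of this weight. The building block $-c\log(-\rho)$ lives only on $\Omega$, since it blows up at $\partial\Omega$ and cannot be continued to a psh function that stays finite on the collar $\Omega^t\setminus\overline\Omega$; so one must either splice it with a genuinely $\Omega^t$-defined weight near the collar, or — equivalently — split $\overline\partial v=\alpha$ into an inner problem on $\Omega$ solved with $\rho$ and an outer problem on the collar, and then recombine, and it is in this recombination that the comparison between the fixed power $\delta_\Omega^{c\alpha}$ and the exponentially thin collar width $Ce^{-C_1/t^\beta}$ pins down the hypothesis $\beta<\alpha/2$. The remaining inputs — density of bounded holomorphic functions in $H^2(\Omega,\varphi)$ for hyperconvex $\Omega$, the regularization of $\rho$, and the existence of the almost-holomorphic extension across $\{\rho_s=-s\}$ — are routine consequences of the hyperconvexity furnished by~(1).
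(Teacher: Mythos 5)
Your overall architecture is right in spirit (cut off, solve $\bar{\partial}$ on $\Omega^t$ with a Donnelly--Fefferman weight, balance the exponent from hypothesis~(1) against the collar thinness from hypothesis~(2)), but the decisive step --- ``solving $\bar\partial v=\alpha$ \dots yields $\|v\|_{L^2(\Omega,\varphi)}\to 0$'' --- does not close for a general $f$, and the paper's proof is organized precisely to avoid attempting it. Two concrete obstructions. First, your $\bar{\partial}$-data includes $\tilde f\,\bar{\partial}\chi_t$ on the collar $\Omega^t\setminus\overline{\Omega}$, where $|\bar{\partial}\chi_t|\sim \eta(t)^{-1}=C^{-1}e^{C_1/t^{\beta}}$ is exponentially large; as you acknowledge, the weight $(-\rho)^c\lesssim\delta_\Omega^{c\alpha}$ does not live there, and the almost-holomorphic extension $\tilde f$ (built from $f$ on $\{\rho_s\le -s\}$, deep inside $\Omega$) is not controlled on the collar by $\|f\|_{H^2(\Omega,\varphi)}$. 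The ``splice/recombine'' you defer to is exactly the part that has no content yet. Second, even for the inner part of the data, supported on a strip $\{\delta_\Omega<d_t\}$, the estimate requires a quantitative rate for $\int_{\{\delta_\Omega<d_t\}}|f|^2e^{-\varphi}$ strong enough to beat the Donnelly--Fefferman constant $\sim\bigl(\log(t/\eta(t))\bigr)^2\sim t^{-2\beta}$. For a general $f\in H^2(\Omega,\varphi)$ --- and equally for a general \emph{bounded} $f$ against a general psh weight --- this integral tends to $0$ with no rate whatsoever, so your ``exponent bookkeeping'' cannot be carried out. (Separately, density of bounded holomorphic functions in weighted $H^2$ of a hyperconvex domain is not a ``routine consequence'' of hyperconvexity, but that is a secondary issue.)

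The missing idea is a reduction to a dense family for which hypothesis~(1) \emph{does} produce a rate, namely the Bergman kernels themselves. The paper first proves a Skwarczy\'nski-type theorem (Theorem~\ref{th:Sk1}): if $K_{\Omega^{t},\varphi}(w,w)\to K_{\Omega,\varphi}(w,w)$ for $w$ in a compact set of uniqueness $E$, then $\bigcup_t H^2(\Omega^t,\varphi)$ is dense in $H^2(\Omega,\varphi)$. So one only runs the cut-off-and-$\bar{\partial}$ scheme for $f=K_{\Omega,\varphi}(\cdot,w)$, $w\in E$ (Theorem~\ref{th:Main2}), with a cutoff in the variable $-\log(-\rho_t)$, $\rho_t=-\log(2\delta_{\Omega^t}/\eta(t))$, whose transition region $\{t\le\delta_{\Omega^t}\le d_t\}$ lies entirely inside $\Omega$ --- the collar carries no $\bar{\partial}$-data at all. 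Hypothesis~(1) enters only through Proposition~\ref{prop:estimatedecay}: writing $K_{\Omega,\varphi}(\cdot,w)$ as the Bergman projection of compactly supported data and applying the Berndtsson--Charpentier estimate with the weight $-r\log(-\rho)$ gives $\nu_E(t)=\sup_{w\in E}\int_{\{\delta_\Omega<t\}}|K_{\Omega,\varphi}(\cdot,w)|^2e^{-\varphi}\lesssim t^{\alpha'}$ for every $\alpha'<\alpha$ --- a rate that is special to the kernels and is exactly what your proposal lacks. The sufficient condition of Theorem~\ref{th:Main2} is $\sqrt{\nu_E(2t)}\,\log(2t/\eta(t))\to 0$, which here reads $t^{\alpha'/2}\cdot t^{-\beta}\to 0$, i.e.\ $\beta<\alpha/2$; so your numerology (the square root halving $\alpha$) is correct, but the mechanism producing it is the kernel reduction plus Proposition~\ref{prop:estimatedecay}, not a weight grafted from $-c\log(-\rho)$ onto the collar.
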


 Since every bounded simply-connected domain in $\mathbb C$ admits a negative continuous subharmonic function $\rho$ with  $-\rho\lesssim \delta_\Omega^{1/2}$ (see e.g., \cite{CarlesonGamelin}),   we can lighten the statement of Theorem \ref{cor:Main22}  as follows.

\begin{corollary}
	Let $\Omega$ be a bounded simply-connected domain in $\mathbb C$ and $\varphi$ a psh function in a neighbourhood of $\overline{\Omega}$.  If there exists a Stein neighbourhood basis  $\{\Omega^t\}_{0<t\le t_0}$ of\/  $\overline{\Omega}$\/ such that  
	$$ 
	C_1e^{-C_2/t^\beta}  \le  \delta_{\Omega^t}(z)\le  t,  \  \forall\,z\in \partial \Omega, 
	$$  
	 for some $\beta<1/4$ and $C_1, C_2>0$,
then $\mathcal O(\overline{\Omega}) \cap H^2(\Omega, \varphi) $ is dense in $ H^2(\Omega, \varphi)$.
 
	\end{corollary}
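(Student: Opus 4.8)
The plan is to deduce this directly from Theorem~\ref{cor:Main22}, used with exponent $\alpha=1/2$; with that choice the hypothesis $\beta<1/4$ becomes exactly $\beta<\alpha/2$, so the only thing that needs checking is condition~(1) of that theorem, namely the existence of a negative continuous psh function $\rho$ on $\Omega$ with $-\rho\lesssim\delta_\Omega^{1/2}$.

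First I would produce $\rho$ from the Riemann map. Since $\Omega$ is a bounded --- hence proper --- simply-connected subdomain of $\C$, there is a conformal equivalence $g\colon\Omega\to\D$; set $\rho:=|g|^2-1$. Then $\rho$ is real-analytic, strictly negative, and subharmonic on $\Omega$ (because $\Delta|g|^2=4|g'|^2\ge 0$), hence a legitimate candidate for condition~(1). The remaining content of that condition is the size estimate $1-|g|\lesssim\delta_\Omega^{1/2}$, since $-\rho=1-|g|^2\le 2(1-|g|)$.

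For this estimate I would put $f:=g^{-1}\colon\D\to\Omega$, fix $z\in\Omega$ and $w:=g(z)$, and combine two classical one-variable facts. Composing $f$ with the disk automorphism sending $0$ to $w$ and invoking the Koebe one-quarter theorem gives $\delta_\Omega(z)\ge\tfrac14|f'(w)|(1-|w|^2)$; and the Koebe distortion theorem, applied to the normalization $(f-f(0))/f'(0)$ of $f$, gives $|f'(w)|\ge c_\Omega(1-|w|)$ with $c_\Omega=|f'(0)|/8>0$ fixed (depending only on $\Omega$ and the chosen $g$). Multiplying the two inequalities and using $1-|w|^2\ge 1-|w|$ yields $\delta_\Omega(z)\gtrsim(1-|w|)^2$, hence $1-|w|\lesssim\delta_\Omega(z)^{1/2}$, as wanted. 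Alternatively, one can simply cite \cite{CarlesonGamelin} for the existence of such a $\rho$, as indicated in the sentence preceding the statement.

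With condition~(1) verified for $\alpha=1/2$, and condition~(2) being verbatim the Stein-neighbourhood-basis hypothesis of the corollary (with $C=C_1$, $C_1=C_2$ and $\beta<1/4=\alpha/2$), Theorem~\ref{cor:Main22} applies and gives the density of $\mathcal O(\overline{\Omega})\cap H^2(\Omega,\varphi)$ in $H^2(\Omega,\varphi)$. I do not expect any genuine obstacle here: the only non-formal ingredient is the Koebe-type boundary-distance estimate for the Riemann map, which is standard, and everything else is bookkeeping in matching the hypotheses of Theorem~\ref{cor:Main22}.
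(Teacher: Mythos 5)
Your proposal is correct and follows the paper's own route exactly: the corollary is obtained by applying Theorem~\ref{cor:Main22} with $\alpha=1/2$ (so that $\beta<1/4=\alpha/2$), the only extra ingredient being the existence of a negative continuous subharmonic $\rho$ with $-\rho\lesssim\delta_\Omega^{1/2}$ on a bounded simply-connected planar domain, which the paper simply cites to \cite{CarlesonGamelin}. Your Koebe one-quarter/distortion argument supplying $\delta_\Omega(z)\gtrsim(1-|g(z)|)^2$ for $\rho=|g|^2-1$ is a correct proof of that cited fact, so there is no gap.
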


In the Appendix, we will construct a class of domains which satisfy Theorem  \ref{cor:Main22} but may  not satisfy  $\eta(t) = Ct^\alpha$, $C>0$, $\alpha >1$.

For the special weight $\varphi = - \alpha \log \delta$,  $\alpha >0$,  where $\delta$ is the boundary distance,  we have the following.

\begin{theorem} \label{th:deltabijin1}
	Let $D$ be a domain in $\mathbb{C}^n$. Suppose there exists a Stein neighbourhood basis  $\{D^t\}_{0<t\le t_0}$ of\/  $\overline{D}$\/ such that    
$$
\Lambda_t:=\sup_{z\in \partial D} \delta_{D^t}(z) \rightarrow 0, \quad  ( t\to  0 ), 
$$	
where $\delta_{D^t}$ (resp. $\delta_D$) is the Euclidean boundary distance of $D ^{t}$ (resp. $D$). Then  for  $\alpha > 0$, $\mathcal{O} \left(\overline{D}  \right)   \cap  H ^{2}   \left (D,   - \alpha \log \delta_D  \right)   $ is dense in $ H ^{2}   \left (D,   - \alpha \log \delta_D  \right)   $.
\end{theorem}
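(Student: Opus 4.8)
The plan is to reduce the statement to approximation by restrictions of functions holomorphic on the Stein neighbourhoods $D^t$, and then to manufacture such approximants by a cut-off-and-correct argument on $D^t$, the correction being supplied by a weighted $L^2$-estimate for $\bar\partial$.

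First, since each $D^t$ is an open set containing the compact set $\overline D$, every $h\in\mathcal O(D^t)$ is holomorphic in a neighbourhood of $\overline D$ and bounded near $\overline D$, hence lies in $H^2(D,-\alpha\log\delta_D)$; thus $\bigcup_t\mathcal O(D^t)|_D\subset\mathcal O(\overline D)\cap H^2(D,-\alpha\log\delta_D)$, and it suffices to approximate an arbitrary $f\in H^2(D,-\alpha\log\delta_D)$ by members of this union. The elementary geometric input used throughout is the two-sided comparison $\delta_D(z)\le\delta_{D^t}(z)\le\delta_D(z)+\Lambda_t$ for all $z\in D$ (the left inequality because $\mathbb C^n\setminus D^t\subset\mathbb C^n\setminus D$, the right one by the triangle inequality through the point of $\partial D$ nearest to $z$): it gives $\delta_{D^t}\downarrow\delta_D$ uniformly on $D$, the inequality $\delta_{D^t}^\alpha\ge\delta_D^\alpha$ everywhere on $D$, and $\delta_{D^t}\le 3\Lambda_t$ (hence $\delta_{D^t}^\alpha\lesssim\Lambda_t^\alpha$) on the collar $\{z\in D:\delta_D(z)<2\Lambda_t\}$. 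Now fix $f$ and a small $t$, and take a cut-off $\chi_t$ depending only on $\delta_D$, with $\chi_t\equiv 1$ on $\{\delta_D\ge 2\Lambda_t\}$, $\chi_t\equiv 0$ on $\{\delta_D\le\Lambda_t\}$ and $|\bar\partial\chi_t|\lesssim\Lambda_t^{-1}$ on $\{\Lambda_t\le\delta_D\le 2\Lambda_t\}$. Since $\mathrm{supp}\,\chi_t\subset\{\delta_D\ge\Lambda_t\}\subset\subset D$, the product $\chi_t f$ extends by $0$ to a smooth compactly supported object on $D^t$, and $\bar\partial(\chi_t f)=f\,\bar\partial\chi_t$ is supported in the collar. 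As $D^t$ is pseudoconvex I solve $\bar\partial u_t=f\,\bar\partial\chi_t$ on $D^t$ (with a weighted estimate specified below) and set $f_t:=\chi_t f-u_t\in\mathcal O(D^t)$, so $f-f_t=(1-\chi_t)f+u_t$. The first summand is supported in $\{\delta_D<2\Lambda_t\}$, so $\int_D|(1-\chi_t)f|^2\delta_D^\alpha\to 0$ by dominated convergence, and by $\delta_D^\alpha\le\delta_{D^t}^\alpha$ on $D$ the whole matter reduces to arranging $\int_{D^t}|u_t|^2\delta_{D^t}^\alpha\to 0$.

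The crux is thus to solve $\bar\partial u_t=f\,\bar\partial\chi_t$ on the pseudoconvex $D^t$ with $\int_{D^t}|u_t|^2\delta_{D^t}^\alpha\lesssim\int_{\{\Lambda_t\le\delta_D\le2\Lambda_t\}}|f|^2\delta_D^\alpha=:\varepsilon_t\to 0$, that is, without losing the factor $|\bar\partial\chi_t|^2\sim\Lambda_t^{-2}$. The good weight is $\phi_t:=-\alpha\log\delta_{D^t}$, which is psh precisely because $D^t$ is pseudoconvex; here $e^{-\phi_t}=\delta_{D^t}^\alpha\ge\delta_D^\alpha$ on $D$ while $e^{-\phi_t}=\delta_{D^t}^\alpha\lesssim\Lambda_t^\alpha$ on the collar. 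I would run a Donnelly--Fefferman/Berndtsson--Charpentier type estimate in which the size of the datum on the right-hand side is measured against the complex Hessian of $-\log\delta_{D^t}$ (or of a defining function of $D^t$) rather than the Euclidean metric: near $\partial D^t$ the form $\bar\partial\delta_{D^t}$ points in the direction in which that Hessian has size $\gtrsim\delta_{D^t}^{-2}$, so the co-norm of $f\,\bar\partial\chi_t$ against it is only $\lesssim|f|^2|\bar\partial\chi_t|^2\delta_{D^t}^2\sim|f|^2$ on the collar, and multiplying by $\delta_{D^t}^\alpha\asymp\Lambda_t^\alpha\asymp\delta_D^\alpha$ there returns exactly $\varepsilon_t$. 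This is the one genuinely delicate step, and the obstacle is that $\overline D$ — hence a priori $D^t$ — need not be smooth, so the Hessian-of-distance bound is not literally available. I would circumvent this by first inserting smoothly bounded strictly pseudoconvex domains between $\overline D$ and $D^t$ (still with $\sup_{\partial D}\delta\to 0$) and working with their defining functions $\rho_t$, for which $i\partial\bar\partial(-\log(-\rho_t))\ge i\partial(-\log(-\rho_t))\wedge\bar\partial(-\log(-\rho_t))$ and $|\rho_t|\asymp\delta_{D^t}$, $|\nabla\rho_t|\asymp 1$ near the boundary, after aligning the cut-off with the level sets of $-\rho_t$; the comparison of the first step keeps the relevant supports inside $D$ and $\delta_{D^t}^\alpha$ comparable to $\delta_D^\alpha$ on the transition region. (Alternatively, grading the cut-off logarithmically in $\delta_D$ makes $\bar\partial\chi_t$ a multiple of $\bar\partial(\log\delta_D)$, which makes the co-norm bound transparent.)

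Combining the two estimates, $\|f-f_t\|_{H^2(D,-\alpha\log\delta_D)}\le\|(1-\chi_t)f\|_{H^2(D,-\alpha\log\delta_D)}+\|u_t\|_{H^2(D,-\alpha\log\delta_D)}\to 0$ as $t\to 0$, while $f_t\in\mathcal O(D^t)\subset\mathcal O(\overline D)\cap H^2(D,-\alpha\log\delta_D)$; hence $\mathcal O(\overline D)\cap H^2(D,-\alpha\log\delta_D)$ is dense in $H^2(D,-\alpha\log\delta_D)$, as claimed.
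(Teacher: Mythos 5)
Your overall architecture is the same as the paper's: cut off $f$ near $\partial D$, correct by solving $\bar\partial u=f\,\bar\partial\chi_t$ on an intermediate pseudoconvex domain between $\overline D$ and $D^t$ with a Donnelly--Fefferman/Berndtsson--Charpentier estimate, and reduce everything to a collar integral of $|f|^2\delta_D^\alpha$ that tends to zero. You also correctly isolate the crux. But the crux is exactly where your argument has a genuine gap. Your primary route needs $i\partial\bar\partial(-\log\delta_{D^t})\gtrsim\delta_{D^t}^{-2}\,i\partial\delta_{D^t}\wedge\bar\partial\delta_{D^t}$, and for a general pseudoconvex $D^t$ this is simply not true: Oka's lemma gives only that $-\log\delta_{D^t}$ is psh, and in the identity $i\partial\bar\partial(-\log\delta)=\delta^{-1}i\partial\bar\partial(-\delta)+\delta^{-2}i\partial\delta\wedge\bar\partial\delta$ the first term can be arbitrarily negative (it is controlled only when $-\delta$ itself is psh, e.g.\ for convex domains). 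Your proposed circumvention by inserting a smooth strictly pseudoconvex $\widetilde{D^t}$ with psh defining function $\rho_t$ does give $i\partial\bar\partial(-\log(-\rho_t))\ge i\partial\log(-\rho_t)\wedge\bar\partial\log(-\rho_t)$, but this lower bound is a \emph{rank-one} form in the direction $\partial\rho_t$; the co-norm of $f\,\bar\partial\chi_t$ against it is $+\infty$ unless $\bar\partial\chi_t$ is pointwise proportional to $\bar\partial\rho_t$. So your cut-off must be a function of $\rho_t$, not of $\delta_D$, and then you still owe the verifications that the transition band $\{a_t\le-\rho_t\le b_t\}$ sits inside $D$ and shrinks to $\partial D$, and that $(-\rho_t)^\alpha\asymp\delta_D^\alpha$ there with constants independent of $t$ --- none of which is automatic. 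Finally, even with the alignment fixed, a cut-off graded linearly (or in a single logarithm) over a band of width $\sim\Lambda_t$ produces a co-norm against $\rho_t^{-2}\,i\partial\rho_t\wedge\bar\partial\rho_t$ of size $(\log(1/\Lambda_t))^2$ or worse, which does not stay bounded. Your parenthetical alternative of grading in $\log\delta_D$ also cannot be used as stated, since $D$ itself is not assumed pseudoconvex, so $-\log\delta_D$ need not be psh.

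The paper closes exactly this gap with a more careful choice that needs only Oka's lemma. Writing $L=\log(1/\delta_{D^t})$ (psh on $D^t$), it takes the weight $\varphi_t=\alpha L-\tfrac12\log L$ and twist $\psi_t=-\tfrac12\log L$, so that
\begin{equation*}
i\partial\bar\partial(\varphi_t+\psi_t)\;\ge\;\frac{i\partial L\wedge\bar\partial L}{L^2}\;\ge\;2\,i\partial\psi_t\wedge\bar\partial\psi_t ,
\end{equation*}
and grades the cut-off \emph{doubly} logarithmically, $\chi(\log L-\log\log(1/\varepsilon))$, so that the transition region is $\{\varepsilon\le\delta_{D^t}\le\sqrt{\varepsilon}\}$ and $\bar\partial$ of the cut-off is a bounded multiple of $\bar\partial L/L$ --- whose co-norm against $L^{-2}\,i\partial L\wedge\bar\partial L$ is a universal constant. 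This removes both the smoothness requirement and the alignment problem at once, while keeping $e^{-\varphi_t}\asymp\delta_{D^t}^\alpha\asymp\delta_D^\alpha$ on the transition region (with $\varepsilon$ fixed first and then $t\le t_\varepsilon$). You would need to import this double-log device, or an equivalent source of positivity, before your Step on the $\bar\partial$-equation can be considered proved.
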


  As a consequence of Theorem \ref{th:deltabijin1},  one can easily obtain the following statement:
  
  \begin{corollary}\label{co:proofofco1}
  	Polynomials are dense in $ H ^{2}   \left (D,   - \alpha \log \delta_D  \right)   $ for all $\alpha > 0 $ if $D \subset \mathbb{C}^n$ is a bounded domain such that $\overline{D}$ is polynomially convex and $\overline{D} ^o =D$.
  \end{corollary}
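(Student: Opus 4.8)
The plan is to reduce to Theorem~\ref{th:deltabijin1} and then pass from $\mathcal{O}(\overline{D})$ to polynomials via the Oka--Weil theorem. First I would produce a Stein neighbourhood basis of $\overline{D}$ to which Theorem~\ref{th:deltabijin1} applies. Since $\overline{D}$ is a compact polynomially convex subset of $\mathbb{C}^n$, it admits a nested neighbourhood basis $\{D^t\}_{0<t\le t_0}$ consisting of bounded polynomial polyhedra; each such polyhedron is Runge in $\mathbb{C}^n$, hence Stein. It remains to check that $\Lambda_t=\sup_{z\in\partial D}\delta_{D^t}(z)\to 0$ as $t\to 0$, and this is precisely where the hypothesis $\overline{D}^o=D$ enters. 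If it failed, there would be $\varepsilon_0>0$, parameters $t_k\downarrow 0$ and points $z_k\in\partial D$ with the ball $B(z_k,\varepsilon_0)\subset D^{t_k}$; passing to a subsequence $z_k\to z_*\in\partial D$, for $k$ large one gets $B(z_*,\varepsilon_0/2)\subset D^{t_k}$, and since $\{D^t\}$ is a nested neighbourhood basis of $\overline{D}$ this forces $B(z_*,\varepsilon_0/2)\subset\bigcap_{0<t\le t_0}D^t=\overline{D}$, hence $B(z_*,\varepsilon_0/2)\subset \overline{D}^o=D$, contradicting $z_*\in\partial D$.

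Granting this, Theorem~\ref{th:deltabijin1} yields that $\mathcal{O}(\overline{D})\cap H^2(D,-\alpha\log\delta_D)$ is dense in $H^2(D,-\alpha\log\delta_D)$ for every $\alpha>0$, so it suffices to approximate a fixed $f\in\mathcal{O}(\overline{D})$ by polynomials in the $H^2(D,-\alpha\log\delta_D)$ norm. Such an $f$ is holomorphic on some open set $U\supset\overline{D}$, and by the Oka--Weil theorem (using polynomial convexity of $\overline{D}$) there are polynomials $p_k$ with $\sup_{\overline{D}}|f-p_k|\to 0$. Since $D$ is bounded, $\delta_D$ is bounded on $D$ and $\int_D\delta_D^{\alpha}\,d\lambda<\infty$; therefore
\begin{equation*}
\|f-p_k\|_{L^2(D,-\alpha\log\delta_D)}^2=\int_D|f-p_k|^2\,\delta_D^{\alpha}\,d\lambda\ \le\ \Big(\sup_{\overline{D}}|f-p_k|\Big)^2\int_D\delta_D^{\alpha}\,d\lambda\ \longrightarrow\ 0.
\end{equation*}
(In particular every polynomial belongs to $H^2(D,-\alpha\log\delta_D)$, which is why no extra integrability assumption on the weight is needed here.) Combining the two approximations proves that polynomials are dense in $H^2(D,-\alpha\log\delta_D)$.

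I do not expect a serious obstacle: the analytic core is already contained in Theorem~\ref{th:deltabijin1}. The only points requiring care are the compactness argument above---which shows that polynomial convexity together with $\overline{D}^o=D$ automatically delivers the mild hypothesis $\Lambda_t\to 0$, and that $\overline{D}^o=D$ cannot be dropped (the slit disc $\mathbb{D}\setminus[0,1)$ is the standard counterexample, where $\Lambda_t$ stays bounded away from $0$)---together with the elementary observation that uniform approximation on $\overline{D}$ automatically upgrades to $H^2(D,-\alpha\log\delta_D)$ approximation because the weight $\delta_D^{\alpha}$ is bounded on the bounded set $D$.
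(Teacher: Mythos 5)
Your proposal is correct and follows essentially the same route as the paper: build pseudoconvex (Stein) neighbourhoods shrinking to $\overline{D}$ using polynomial convexity and $\overline{D}^o=D$, invoke Theorem \ref{th:deltabijin1} to approximate by functions holomorphic near $\overline{D}$, and finish with Oka--Weil plus the boundedness of the weight $\delta_D^{\alpha}$. The only cosmetic difference is that you use polynomial polyhedra where the paper uses sublevel sets of a psh exhaustion function from H\"ormander, and your compactness argument for $\Lambda_t\to 0$ is in fact spelled out more explicitly than in the paper.
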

 
In this paper,   as an important application of the $L^2$-approximation theorem,  we obtain the completeness of the Bergman metric for certain Hartogs domains.  

The celebrated  Kobayashi's criterion implies that if the Bergman kernel $K_\Omega $ of a domain $\Omega$ is an exhaustion function  and every $L^2$-holomorphic function in $\Omega$ can be approximated by bounded holomorphic functions in $\Omega$,  then the domain $\Omega$ is Bergman complete (i.e., the Bergman metric of $\Omega$ is complete).  It is known that  every bounded Bergman complete domain in $\C^n$ is  pseudoconvex (see \cite{Bremerm1955}). On the other hand,  the punctured disc is pseudoconvex but not Bergman complete.  Using Kobayashi's criterion, Bergman completeness and quantitative estimates of the Bergman distance have been obtained for various pseudoconvex domains (see \cite{Ohsawa1981}, \cite{D-O1995}),  \cite{JPZ2000},  \cite{Blocki2005},  \cite{NP2003},  \cite{NP2003-2},  \cite{PW2005}, etc.).  In particular, every hyperconvex domain is Bergman complete (see \cite{herbort-99},  \cite {Blocki-Pflug98}). 
 
Hartogs domains provide  various non-hyperconvex but Bergman complete examples (see \cite{PW2005}, \cite{Chen2013}),  e.g.,  it is shown \cite{Chen2013} that if $\varphi >0$ is a continuous psh function on a bounded pseudoconvex domain $D$ in $\C^n$ satisfying
\begin{eqnarray}\label{eq:assumption11}
\liminf_{z\rightarrow \partial  D} \frac{\varphi(z) }{ | \log \delta_D(z)|} = +\infty,
\end{eqnarray}
then the Hartogs domain 
$$
\Omega :  = \{ (z,w) \in D \times \C:  |w| < e^{-\varphi(z) }  \}
$$
 is Bergman complete.
  On the other hand,  if $D$ is the punctured disc $\D^*$ and $\varphi(z) \sim - N \log |z| $ as $z \rightarrow 0$, then $\Omega$ is not Bergman complete (compare \cite{Chen2013}). Thus,  (\ref{eq:assumption11}) cannot be weakened to 
  \begin{eqnarray}\label{eq:assumption111}
\liminf_{z\rightarrow \partial  D} \frac{\varphi(z) }{ | \log \delta_D(z)|} > 0,
\end{eqnarray}
e.g., when $\varphi = -\alpha \log \delta_D(z)$, $\alpha >0$.

 Here we shall prove
 
 \begin{theorem} \label{th:Bergmancomplete1}
Let $D\subset \subset \C$ be a domain and   $\varphi(z) = -\alpha \log \delta_D(z)$, $\alpha >0$.  Then the Hartogs domain
$$
\Omega _\alpha : = \{(z,w) \in D\times \C:  |w|<e^{-\varphi(z)}   \}
$$
is Bergman complete if and only if  every boundary point $z_0 $ of $D$
 is non-isolated, i.e.,  there exists a sequence  $\{z_k\}_{k=1}^\infty \subset \C\setminus D$ such that
$z_k \neq z_0$, $z_k \rightarrow z_0$ as $k\rightarrow \infty.$
\end{theorem}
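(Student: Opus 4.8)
The plan is to prove the two implications, using throughout the Hartogs expansion on $\Omega_\alpha$. Writing $f\in\mathcal O(\Omega_\alpha)$ as $f(z,w)=\sum_{j\ge 0}a_j(z)w^j$ and integrating over the fibres $\{|w|<\delta_D(z)^\alpha\}$ gives, with $\varphi:=-\alpha\log\delta_D$,
\begin{equation}\label{eq:hartogskernel}
\|f\|_{L^2(\Omega_\alpha)}^2=\sum_{j\ge 0}\frac{\pi}{j+1}\,\|a_j\|_{H^2(D,(2j+2)\varphi)}^2,\qquad
K_{\Omega_\alpha}\big((z,w),(z,w)\big)=\sum_{j\ge 0}\frac{j+1}{\pi}\,|w|^{2j}\,K_{(2j+2)\varphi}(z,z),
\end{equation}
where $K_{m\varphi}$ is the Bergman kernel of $H^2(D,m\varphi)=H^2(D,-m\alpha\log\delta_D)$. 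Note also that $\Omega_\alpha$ is bounded, and pseudoconvex because $\varphi=\sup_{\zeta\in\partial D}\big(-\alpha\log|\cdot-\zeta|\big)$ is subharmonic on $D$; in particular $\Omega_\alpha$ (with trivial weight) may be fed to the $L^2$-approximation theorems of the paper.

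For the sufficiency direction (no isolated boundary point of $D$ $\Rightarrow$ $\Omega_\alpha$ Bergman complete) I would use Kobayashi's criterion, so it suffices to show that (a) $K_{\Omega_\alpha}$ is an exhaustion function and (b) bounded holomorphic functions are dense in $H^2(\Omega_\alpha)$. Statement (a) follows from \eqref{eq:hartogskernel} using suitable test functions in each Hartogs mode: for a boundary sequence with $z\to z_0\in\partial D$, the $j=0$ term and $(\zeta-z_0)^{-1}\in H^2(D,2\varphi)$ (from $\delta_D(\zeta)\le|\zeta-z_0|$ and $\alpha>0$) give $K_{\Omega_\alpha}\ge\pi^{-1}K_{2\varphi}(z,z)\gtrsim|z-z_0|^{-2}\to\infty$; for a boundary sequence with $z\to z_*\in D$ (so $|w|\to\delta_D(z_*)^\alpha>0$), one uses in the $j$-th mode a function $(\zeta-p_0)^{-k}$ with $p_0\in\partial D$ the nearest boundary point to $z$ and $k\sim(j+1)\alpha$, whose $H^2(D,(2j+2)\varphi)$-norm is bounded uniformly in $j$, so that $\sum_j(j+1)x^j=(1-x)^{-2}$ forces $K_{\Omega_\alpha}\to\infty$ as $|w|\uparrow\delta_D(z_*)^\alpha$ (so (a) needs no hypothesis on $D$). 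For (b) I would apply Theorem~\ref{th:moregeneral} with trivial weight to $\Omega_\alpha$; this requires a Stein neighbourhood basis $\{\Omega_\alpha^t\}$ of $\overline{\Omega_\alpha}$ with $\eta(t)\le\delta_{\Omega_\alpha^t}(P)\le t$ for all $P\in\partial\Omega_\alpha$, where $\eta$ satisfies \eqref{eq:condition} (a polynomial $\eta(t)=t^{C(\alpha)}$ is more than enough). I would take $\Omega_\alpha^t=\{(z,w):z\in D^t,\ |w|<e^{-\psi_t(z)}\}$ with $D^t\downarrow\overline D$ pseudoconvex and $\psi_t$ subharmonic on $D^t$, $\psi_t<\varphi$ on $D$, $\psi_t\uparrow\varphi$, and $\psi_t\ge-\log t$ on $\partial D$ (the last condition forces $\delta_{\Omega_\alpha^t}\le t$; the regularization scale governs $\eta$). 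This is where the hypothesis on $D$ enters decisively: at an \emph{isolated} boundary point $z_0$, $\varphi=-\alpha\log|\cdot-z_0|$ is harmonic on a punctured neighbourhood of $z_0$, and the maximum principle on small discs about $z_0$ forces $e^{-\psi_t(z_0)}$ to remain bounded below by a fixed positive constant, destroying $\delta_{\Omega_\alpha^t}\le t$; when \emph{no} boundary point is isolated, circles centred on $\partial D$ always meet (or come arbitrarily close to) $\partial D$, $\varphi$ is genuinely subharmonic near $\partial D$, and the required $\psi_t$ can be built. Granting this, Theorem~\ref{th:moregeneral} gives density of $\mathcal O(\overline{\Omega_\alpha})\cap H^2(\Omega_\alpha)$, hence of bounded holomorphic functions, and Kobayashi's criterion yields Bergman completeness. (If the natural neighbourhoods only admit an exponentially small inner distance, I would invoke Theorem~\ref{cor:Main22} instead, having first exhibited a suitable negative psh function on $\Omega_\alpha$.)

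For the necessity direction ($D$ has an isolated boundary point $\Rightarrow$ $\Omega_\alpha$ not Bergman complete), suppose $z_0\in\partial D$ is isolated; after a translation $z_0=0$ and there is $r_0>0$ with $\{0<|z|<r_0\}\subset D$ and $\delta_D(z)=|z|$ for $|z|<r_0$, so $\Omega_\alpha\cap\{|z|<r_0\}=\{(z,w):0<|z|<r_0,\ |w|<|z|^\alpha\}$. I would show that the curve $\gamma(s)=(s,0)$, $0<s\le r_0/2$, has finite Bergman length in $\Omega_\alpha$; since $\gamma(s)\to(0,0)\notin\Omega_\alpha$, the points $(1/k,0)$ then form a Bergman-Cauchy sequence with no limit in $\Omega_\alpha$, so $\Omega_\alpha$ is not Bergman complete. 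By \eqref{eq:hartogskernel} and the Hartogs expansion, the extremal problem defining the Bergman metric of $\Omega_\alpha$ at $(s,0)$ in the direction $\partial_z$ is optimized already within the $j=0$ component (adding higher Hartogs modes only wastes norm), so that metric coefficient equals $\partial_s\partial_{\bar s}\log K_{2\varphi}(s,s)$, i.e.\ the weighted Bergman metric of $H^2(D,-2\alpha\log\delta_D)$ at $s$. Localizing the weighted Bergman kernel near $0$ --- where $D$ is a punctured disc and the weight is exactly $|z|^{2\alpha}$ --- one compares $K_{2\varphi}(s,s)$ with the kernel of $H^2(\{0<|\zeta|<r\},|\zeta|^{2\alpha})$, which is computed from the orthonormal monomials to be $c\,|z|^{-2N}\big(1+O(|z|^2)\big)$ with $N=\lceil\alpha\rceil\ge 1$; since $\log|z|$ is harmonic, $\partial_s\partial_{\bar s}\log K_{2\varphi}(s,s)=O(1)$ as $s\to 0$, whence $\int_0^{r_0/2}\big(\partial_s\partial_{\bar s}\log K_{2\varphi}(s,s)\big)^{1/2}\,ds<\infty$. (For integral $\alpha=N$ this is transparent: $\Omega_\alpha\cap\{|z|<r_0\}\cong\{0<|z|<r_0\}\times\D$ via $(z,w)\mapsto(z,z^{-N}w)$, and the Bergman metric of this product, in the $z$-direction at $(s,0)$, is that of $\{|z|<r_0\}$, which is smooth near $0$.)

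I expect the two ``localization'' steps above to be the main obstacles. On the sufficiency side, it is the neighbourhood-basis construction --- equivalently, the quantitative dichotomy that a subharmonic minorant of $-\alpha\log\delta_D$ which is $\gtrsim\log(1/t)$ on $\partial D$ exists precisely when $D$ has no isolated boundary point --- together with checking the hypotheses of Theorem~\ref{th:moregeneral} (or~\ref{cor:Main22}). On the necessity side, it is localizing $K_{2\varphi}$ near the isolated point with enough regularity to control $\partial\bar\partial\log K_{2\varphi}$, equivalently a localization principle for the Bergman metric of $\Omega_\alpha$ valid at a puncture-type boundary point. The remaining ingredients --- the expansion \eqref{eq:hartogskernel}, the exhaustion estimates, and Kobayashi's criterion --- should be routine.
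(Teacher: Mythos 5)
Your overall architecture (Kobayashi's criterion plus Bergman exhaustiveness plus a density statement for the sufficiency; reduction to the punctured-disc model $\{|w|<|z|^{\alpha}\}$ plus localization of the Bergman metric for the necessity) is the same as the paper's. The necessity direction is essentially the paper's argument: the paper simply cites \cite{Chen2013} for the non-completeness of the model and then invokes the localization lemma, whereas you re-derive the model computation (correctly in spirit, via the $j=0$ Hartogs mode and the monomial expansion of the kernel of $H^2(\D^*,|z|^{2\alpha})$); that part is fine. The exhaustiveness step is also fine and is Lemma \ref{lm:exhaustion} in substance, though the paper's route via Ohsawa--Takegoshi giving $K_{D,2\varphi}\gtrsim e^{2\varphi}$ is cleaner than your explicit test functions (whose norms are not actually uniform in $j$ for irrational $\alpha$, though this is repairable).

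The genuine gap is in the density step of the sufficiency direction, which is exactly where the hypothesis ``no isolated boundary points'' must do its work, and which you explicitly leave as an ``obstacle.'' You propose to apply Theorem \ref{th:moregeneral} (or \ref{cor:Main22}) with trivial weight to $\Omega_\alpha$ itself, which requires constructing a Stein neighbourhood basis $\{\Omega_\alpha^t\}$ of $\overline{\Omega_\alpha}$ with a quantitative two-sided bound $\eta(t)\le\delta_{\Omega_\alpha^t}\le t$ on all of $\partial\Omega_\alpha$. For an arbitrary bounded planar $D$ with no isolated boundary points (whose boundary can be extremely wild), you give no construction of the subharmonic minorants $\psi_t$ of $-\alpha\log\delta_D$ on enlarged domains with the stated bounds, only a maximum-principle heuristic for why the construction \emph{fails} at an isolated point; the asserted ``quantitative dichotomy'' is not proved and is, as far as I can see, at least as hard as the theorem. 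The paper avoids this entirely: it never builds a global Stein neighbourhood basis of $\overline{\Omega_\alpha}$, and it only proves (Lemma \ref{lm:approxi}) that functions holomorphic near a \emph{single} boundary point $(z_0,0)$ are dense in $H^2(\Omega_\alpha,0)$ --- which, combined with exhaustiveness, is all Kobayashi's criterion needs. Concretely, one truncates the Ligocka expansion $f=\sum_j f_jw^j$, and for each mode $f_j\in H^2(D,2(j+1)\varphi)$ one adjoins to $D$ the disc $\Delta(z_0,r_k)$ with $r_k=|z_k-z_0|$, $z_k\in D^c$; the non-isolatedness guarantees $z_k\in\partial D_k$ and hence the elementary comparison $\delta_{D_k}\le 3\,\delta_D$ on the transition annulus of the cut-off, so that the $\bar\partial$-datum has small norm in the \emph{original} weight and a Donnelly--Fefferman estimate produces the approximant holomorphic across $z_0$. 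Without this (or some substitute) your sufficiency direction does not close; I would either supply the neighbourhood-basis construction you assert, or switch to the mode-by-mode local-enlargement argument.
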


This theorem surprised the authors: It seemed strange that the Bergman completeness, a complex analytic property,  can be characterized by a general topological condition non-existence of isolated boundary points.

More generally,  we are interested in under what conditions,  $\Omega$ is Bergman complete when $D$ is a pseudoconvex domain.  For high-dimensional cases,  we have the following.
 \begin{theorem}\label{th:approxidelta1}
Let $D$ be as in Theorem  \ref{th:deltabijin1}  locally and $\varphi(z) = -\alpha \log \delta_D(z)$,  $\alpha >0$.
 Then the Hartogs domain
$$
\Omega _\alpha : = \{(z,w) \in D\times \C:  |w|<e^{-\varphi(z)} =  (\delta_D(z)) ^\alpha   \}
$$
is Bergman complete. 
\end{theorem}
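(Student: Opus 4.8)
The plan is to apply Kobayashi's criterion for Bergman completeness: it suffices to show that (i) the Bergman kernel $K_{\Omega_\alpha}$ is an exhaustion function of $\Omega_\alpha$, and (ii) every function in $H^2(\Omega_\alpha)$ can be approximated (in the $L^2$-norm) by bounded holomorphic functions on $\Omega_\alpha$. The heart of the matter is the Hartogs structure: expanding an $f\in H^2(\Omega_\alpha)$ in the fibre variable $w$ as $f(z,w)=\sum_{j\ge 0} f_j(z)w^j$, a Fubini/orthogonality computation shows
\begin{equation*}
\|f\|^2_{L^2(\Omega_\alpha)} = \sum_{j\ge 0} c_j \int_D |f_j(z)|^2\, \delta_D(z)^{2\alpha(j+1)}\, d\lambda(z),
\qquad c_j = \frac{\pi}{j+1},
\end{equation*}
so that $f_j \in H^2\!\bigl(D, -2\alpha(j+1)\log\delta_D\bigr)$. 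Thus approximation of $f$ on $\Omega_\alpha$ reduces, term by term, to approximation of each $f_j$ in the weighted Bergman space $H^2(D,-2\alpha(j+1)\log\delta_D)$ on $D$, together with a tail estimate to control $\sum_{j>N}$.

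First I would carry out the fibrewise decomposition rigorously and verify the tail control: since $|w|<\delta_D(z)^\alpha$ on $\Omega_\alpha$, the higher-order terms $\sum_{j>N}f_j(z)w^j$ contribute a norm that is bounded by a geometric-type tail of the full series, hence can be made arbitrarily small; this lets us reduce to a finite sum $\sum_{j\le N} f_j(z)w^j$. Next, the hypothesis on $D$ is exactly the local version of the hypothesis of Theorem \ref{th:deltabijin1}, so I would invoke (the localized form of) Theorem \ref{th:deltabijin1} with weight $-\alpha'\log\delta_D$ for each relevant $\alpha' = 2\alpha(j+1)$: each $f_j$ is approximated in $H^2(D,-2\alpha(j+1)\log\delta_D)$ by functions holomorphic on a neighbourhood of $\overline D$. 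Reassembling $\sum_{j\le N}(\text{approximant of }f_j)(z)\, w^j$ produces a holomorphic function on a neighbourhood of $\overline{\Omega_\alpha}$; since $\overline{\Omega_\alpha}$ is bounded, such a function is automatically bounded on $\Omega_\alpha$. This establishes (ii).

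For (i), the exhaustion property of the Bergman kernel, I would use the standard estimate $K_{\Omega_\alpha}(x) \ge |h(x)|^2 / \|h\|^2_{L^2(\Omega_\alpha)}$ applied to well-chosen $h$, combined with the fibrewise picture: for a point $x=(z,w)$ approaching $\partial\Omega_\alpha$ either the base point $z$ approaches $\partial D$ or $|w|\to \delta_D(z)^\alpha$, and in the vertical direction one has the explicit lower bound coming from the one-variable Bergman kernel of the disc $\{|w|<\delta_D(z)^\alpha\}$, which blows up; while in the horizontal direction one uses that $-\log\delta_D$-weighted Bergman kernels on $D$ already form an exhaustion of $D$ (here one may need that every boundary point of $D$ is non-isolated, which is automatic in dimension $\ge 2$ and, for $n=1$, is part of the hypothesis inherited from Theorem \ref{th:deltabijin1}). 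A clean way to package both directions is via the comparison of $K_{\Omega_\alpha}$ with a weighted Bergman kernel on $D$ plus the disc factor; I expect this to follow from a Hörmander $L^2$-estimate for $\dbar$ producing peak functions, or from the results already cited in the introduction (\cite{Chen2013}, \cite{PW2005}) on Hartogs domains over pseudoconvex bases.

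The main obstacle I anticipate is step (i) in the borderline directions — specifically, showing $K_{\Omega_\alpha}(x)\to\infty$ when $x=(z,w)$ stays in a compact part of the $w$-fibre but $z\to\partial D$. The weight $-\alpha\log\delta_D$ is exactly the critical one where (\ref{eq:assumption11}) fails and only (\ref{eq:assumption111}) holds, so the naive argument that works under (\ref{eq:assumption11}) does not apply; one must instead exploit the geometry of $\partial D$ encoded in the Stein neighbourhood basis hypothesis (the condition $\Lambda_t\to 0$) to produce enough $L^2$-holomorphic functions with controlled growth. I would handle this by constructing, for each boundary point, a holomorphic function on $D$ that is $L^2$ against the weight but whose value blows up along the relevant sequence — using the non-isolatedness of boundary points (available here via the hypothesis) to run a Wiener-type / capacity argument as in the one-dimensional Theorem \ref{th:Bergmancomplete1}, then lifting trivially to $\Omega_\alpha$. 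The rest of the argument is then routine bookkeeping with the fibrewise norm identity.
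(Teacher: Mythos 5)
Your strategy for the approximation half is exactly the paper's: expand $f=\sum_j f_j(z)w^j$ via Ligocka's formula (Lemma \ref{prop:ligo}), use the norm identity $\|f\|^2_{L^2(\Omega_\alpha)}=\sum_j\frac{\pi}{j+1}\|f_j\|^2_{L^2(D,2(j+1)\varphi)}$ to truncate to $j\le N$, approximate each $f_j$ in $H^2(D,-2\alpha(j+1)\log\delta_D)$ by Theorem \ref{th:deltabijin1} applied with exponent $2\alpha(j+1)$, and reassemble to get a function holomorphic near $\overline{\Omega_\alpha}$, hence bounded; this is precisely the two-step scheme the paper carries out (in the one-dimensional setting of Lemma \ref{lm:approxi}) and then combines with the Kobayashi criterion (Theorem \ref{lm:kobayashi}).

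The place where you diverge is the exhaustiveness of $K_{\Omega_\alpha}$, which you flag as the "main obstacle" and propose to attack with a Wiener/capacity argument based on non-isolatedness of boundary points. This is a misdiagnosis: non-isolatedness is not a hypothesis of this theorem and is not needed, and no delicate use of the Stein neighbourhood basis is required either. The paper's Lemma \ref{lm:exhaustion} settles the horizontal direction in two lines: by Ohsawa--Takegoshi extension from a single point one has $K_{D,2\varphi}(z)\ge C^{-1}e^{2\varphi(z)}=C^{-1}\delta_D(z)^{-2\alpha}$ for \emph{any} continuous psh $\varphi$, and Ligocka's kernel formula gives $K_{\Omega_\alpha}((z,w))\ge \pi^{-1}K_{D,2\varphi}(z)$, so $K_{\Omega_\alpha}\to\infty$ whenever $z\to\partial D$. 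The criticality of the weight $-\alpha\log\delta_D$ relative to \eqref{eq:assumption11} is irrelevant for exhaustion (it only obstructs completeness through the approximation step, which is where the hypothesis on $D$, or in Theorem \ref{th:Bergmancomplete1} the non-isolatedness, actually enters). The remaining boundary directions, $|w|\to\delta_D(z)^\alpha$ with $z$ in a compact subset of $D$, follow from local hyperconvexity of $\Omega_\alpha$ along $\{w\ne 0\}$ (the function $\log|w|+\varphi(z)$ is a local negative psh barrier) together with Ohsawa's theorem \cite{Ohsawa1993} and the localization principle for the Bergman kernel. With Lemma \ref{lm:exhaustion} in hand your argument closes; without it, your sketch of step (i) remains a genuine gap, since the capacity route you outline is neither developed nor necessary.
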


\section{Preliminaries}
A domain $\Omega \subset \C^n$  is called hyperconvex if there exists a negative continuous psh 
function $\rho$  on $\Omega$  such that $\{ \rho <c \} \subset \subset \Omega  $ for every $c < 0$.  Let $\Omega  \subset \C^n$ be a bounded hyperconvex domain. 
There are two concepts which measure the strength of hyperconvex. The first one is the so-called  Diederich-Forn\ae ss index (see, e.g.,  \cite{D-F1977},  \cite{D-F1977-2},  \cite{AdachiandB2015},  \cite{Fu-Shaw2016}, \cite{Harrington-2008}) defined as follows:
$$
\eta (\Omega): = \sup  \{ \eta \geq 0:  \exists \, \rho \in C(\Omega) \cap PSH^-(\Omega) ,  \  \text {s.t.} \    - \rho \thickapprox   \delta_\Omega ^\eta   \  \},
$$
 where $PSH^-(\Omega)$ ( \rm{resp.} $ SH^-(\Omega) )$ denotes the set of negative psh (\rm{resp.}   sh) functions on  $\Omega$. The second is the hyperconvex index (see \cite{Chen2017}) given by
 $$
\alpha(\Omega): = \sup  \{ \alpha \geq 0:  \exists \, \rho \in C(\Omega) \cap PSH^-(\Omega) ,  \  \text {s.t.} \  -\rho  \lesssim   \delta_\Omega  ^\alpha   \}.
$$

It is first shown in Diederich-Forn\ae ss \cite{D-F1977} that  $\eta(\Omega)>0$ if $\Omega$ is a bounded pseudoconvex domain with $C^2$-boundary. By Harrington \cite{Harrington-2008},  the boundary regularity can be relaxed to Lipschitz. Clearly, $\alpha(\Omega) \geq \eta(\Omega)$.  On the other hand, there are plenty of domains with wild irregular boundaries such that $\alpha(\Omega) >0$, while  it is difficult to verify  $\eta(\Omega) >0$.



The following theorems play  an important role in determining whether a domain is Bergman complete.
\begin{theorem}[Kobayashi criterion]  \cite{Kobayashi1959}\label{lm:kobayashi}
If there is a dense subset $S\subset H^2(\Omega, 0)$ such that for every $f\in S$ and for every
infinite sequence $\{  y_k\}_{k=1} ^\infty$ of points in $\Omega$ without accumulation points in $\Omega$, there is a subsequence $\{  y_{k_j}\}_{j=1} ^\infty$ such that
$$
\frac{| f( y_{k_j}) |^2}{ K_\Omega(  y_{k_j}) } \rightarrow 0 \ \ ( j \rightarrow \infty),
$$
then $\Omega$ is Bergman complete.
\end{theorem}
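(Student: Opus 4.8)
The plan is to realize the Bergman metric as the pullback of the Fubini--Study metric under the Kobayashi embedding and then run a completeness-by-contradiction argument. Write $H:=H^2(\Omega,0)$ for the Bergman space, $K_\Omega(\cdot,z)$ for the reproducing kernel at $z$, and put $\hat k_z:=K_\Omega(\cdot,z)/\sqrt{K_\Omega(z)}\in H$ for the normalized kernel, a unit vector satisfying $\langle f,\hat k_z\rangle=f(z)/\sqrt{K_\Omega(z)}$ for every $f\in H$ (here $\langle f,g\rangle=\int_\Omega f\bar g$ and $K_\Omega(z)=\|K_\Omega(\cdot,z)\|^2$). Consider the Kobayashi embedding $\iota\colon\Omega\to\mathbb P(H)$, $z\mapsto[\hat k_z]$, into the projective Hilbert space. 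Kobayashi's foundational computation is that the Bergman metric of $\Omega$ equals $\iota^\ast g_{FS}$, where $g_{FS}$ is the Fubini--Study metric on $\mathbb P(H)$; consequently the Bergman distance dominates the pulled-back geodesic distance, i.e. $d_B(x,y)\ge d_{FS}(\iota(x),\iota(y))$ for all $x,y\in\Omega$, where $d_{FS}([u],[v])=\arccos|\langle u,v\rangle|$ for unit vectors.

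Next I would argue by contradiction. Suppose $\Omega$ is not Bergman complete; then there is a $d_B$-Cauchy sequence $\{y_k\}$ with no limit in $d_B$. Since the Bergman metric is a genuine continuous positive-definite Hermitian metric, $d_B$ induces the Euclidean topology on $\Omega$; hence $\{y_k\}$ can have no accumulation point in $\Omega$, for otherwise a subsequence would converge in $d_B$ and a Cauchy sequence with a convergent subsequence converges. Thus $\{y_k\}$ is exactly of the type appearing in the hypothesis: an infinite sequence in $\Omega$ without accumulation points in $\Omega$.

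By the distance domination, $\{\iota(y_k)\}$ is Cauchy in $\mathbb P(H)$, which is complete in the Fubini--Study distance, so $\iota(y_k)$ converges; in Hilbert-space terms $|\langle\hat k_{y_k},\hat k_{y_l}\rangle|\to1$ as $k,l\to\infty$. The rank-one orthogonal projections $P_k$ onto $\mathbb C\hat k_{y_k}$ then satisfy $\|P_k-P_l\|^2=1-|\langle\hat k_{y_k},\hat k_{y_l}\rangle|^2\to0$, so they converge in operator norm to a rank-one projection $P$ onto a line $\mathbb C g$ with $\|g\|=1$. For fixed $f\in H$ this gives $|\langle f,\hat k_{y_k}\rangle|=\|P_kf\|\to\|Pf\|=|\langle f,g\rangle|$, that is,
$$
\frac{|f(y_k)|}{\sqrt{K_\Omega(y_k)}}\longrightarrow|\langle f,g\rangle|\qquad(k\to\infty).
$$
Now take any $f\in S$: the hypothesis furnishes a subsequence along which $|f(y_{k_j})|^2/K_\Omega(y_{k_j})\to0$, and comparing with the limit just computed forces $|\langle f,g\rangle|=0$. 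Hence $\langle f,g\rangle=0$ for all $f\in S$, and density of $S$ in $H$ yields $g=0$, contradicting $\|g\|=1$. Therefore no non-convergent $d_B$-Cauchy sequence exists and $\Omega$ is Bergman complete.

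The one genuinely substantial ingredient is the identification of the Bergman metric with $\iota^\ast g_{FS}$, equivalently the inequality $d_B\ge d_{FS}\circ\iota$; this is what transports a Bergman-Cauchy sequence into the \emph{complete} target $\mathbb P(H)$, after which everything is soft Hilbert-space convergence together with the density of $S$. A secondary point requiring care is the reduction from an arbitrary non-convergent Cauchy sequence to one with no accumulation point in $\Omega$, which rests on the fact that $d_B$ and the Euclidean topology coincide on $\Omega$.
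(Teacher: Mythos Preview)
The paper does not supply its own proof of this statement; it is quoted from \cite{Kobayashi1959} as a known tool, so there is nothing to compare your argument against line by line. That said, your proof is correct and is precisely the classical one due to Kobayashi: the identification of the Bergman metric with $\iota^\ast g_{FS}$ lets you push a non-convergent Bergman--Cauchy sequence $\{y_k\}$ into the complete space $\mathbb P(H)$, where $[\hat k_{y_k}]$ converges to some line $\mathbb C g$; the reproducing property turns this into $|f(y_k)|^2/K_\Omega(y_k)\to|\langle f,g\rangle|^2$ for every $f\in H$, the hypothesis on $S$ then forces $g\perp S$, and density yields the contradiction $g=0$. Your handling of the two delicate points --- reducing to a sequence with no accumulation point in $\Omega$, and extracting a unit vector $g$ via norm convergence of the rank-one projections $P_k$ --- is clean and accurate.
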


The following generalization of a theorem of Skwarczy$\rm{\acute{n}}$ski \cite{Skwarcz} plays an essential role in the proof of Theorem \ref{cor:Main22}.

\begin{theorem} \label{th:Sk1}
Let $\Omega$ be a bounded domain in $\mathbb C^n$ such that the boundary of $\Omega$ has Lebesgue measure zero and $\varphi$ a psh function  in a neighborhood of $\overline{\Omega}$.  Suppose there exists a family of domains $\{\Omega^t\}_{0<t\le t_0}$ such that $\overline{\Omega} \subset \Omega^t$ for all $t$  and $\Omega^t 
\searrow  \overline{\Omega}$ as $t \searrow 0$.   Let $K_{\Omega, \varphi}(z,w)$ and $K_{\Omega^t, \varphi}(z,w)$ denote the Bergman kernel of $H^2(\Omega, \varphi)$ and $H^2(\Omega^t, \varphi)$ respectively for $t \ll 1$.    Suppose 
	\begin{equation}\label{eq:BergConverg}
	K_{\Omega^t, \varphi} (w,w)\rightarrow K_{\Omega, \varphi} (w,w),\ \ \ \forall\, w\in E  \ \ \text{as}  \  \  t\rightarrow 0,
	\end{equation}
  where $E$ is  a set of uniqueness for $ H^2(\Omega, \varphi)$,  i.e., if $f\in H^2(\Omega, \varphi)$ satisfies $f=0$ on $E$, then $f=0$ on $\Omega$.   Then we have
	\begin{enumerate}
	\item   
 $  \bigcup _{0<t\ll 1} H^2(  \Omega^t, \varphi)  $  
	 is dense in $ H^2(\Omega, \varphi)$;
	\item  $
	K_{\Omega^t, \varphi} (z,w)\rightarrow K_{\Omega, \varphi} (z,w),\   \text{locally uniformly on }   \Omega \times \Omega, \ \ \text{as}  \  \  t\rightarrow 0.
$
	
	\end{enumerate}
\end{theorem}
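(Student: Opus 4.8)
The plan is to follow Skwarczy\'nski's classical argument, adapting it to the weighted setting. The key object is the "Skwarczy\'nski distance" type quantity built from the normalized reproducing kernels. First I would observe that since $\overline{\Omega}\subset\Omega^t$ and $\partial\Omega$ has measure zero, restriction gives a natural isometric-up-to-norm inclusion: any $f\in H^2(\Omega^t,\varphi)$ restricts to $\Omega$ with $\|f\|_{L^2(\Omega,\varphi)}\le\|f\|_{L^2(\Omega^t,\varphi)}$, and $e^{-\varphi}$ is locally bounded above and below near $\overline{\Omega}$ since $\varphi$ is psh (hence locally bounded) in a neighbourhood of $\overline{\Omega}$. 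This makes the union $V:=\bigcup_{0<t\ll1}H^2(\Omega^t,\varphi)$ a well-defined subspace of $H^2(\Omega,\varphi)$, nested increasingly as $t\searrow 0$.

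The main step is part (1): $V$ is dense. Let $N=\overline{V}^{\perp}\subset H^2(\Omega,\varphi)$ be the orthogonal complement; I want to show $N=\{0\}$. The standard trick is to compute the Bergman kernel of the closed subspace $\overline{V}$, call it $K_{\overline V}$, and of $N$, call it $K_N$, so that $K_{\Omega,\varphi}=K_{\overline V}+K_N$ on the diagonal. On the other hand, for fixed $w\in E$, the reproducing kernel $K_{\Omega^t,\varphi}(\cdot,w)$ lies in $H^2(\Omega^t,\varphi)\subset V$, and a monotonicity/extremal argument shows $K_{\Omega^t,\varphi}(w,w)\nearrow K_{\overline V}(w,w)$ as $t\searrow 0$ (the extremal problem $\sup\{|f(w)|^2:\ f\in H^2(\Omega^t,\varphi),\ \|f\|\le1\}$ increases to the corresponding extremal problem over $V$, hence over $\overline V$, because $e^{-\varphi}$ is controlled near $\overline\Omega$ so restriction loses no norm in the limit). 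Combining with the hypothesis $K_{\Omega^t,\varphi}(w,w)\to K_{\Omega,\varphi}(w,w)$ yields $K_{\overline V}(w,w)=K_{\Omega,\varphi}(w,w)$, hence $K_N(w,w)=0$ for all $w\in E$. Since $K_N(w,w)=\sup\{|f(w)|^2/\|f\|^2:\ f\in N\}$, every $f\in N$ vanishes on $E$; as $E$ is a set of uniqueness, $f\equiv0$, so $N=\{0\}$ and $\overline V=H^2(\Omega,\varphi)$.

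For part (2), I would argue that diagonal convergence of Bergman kernels upgrades to local uniform convergence on $\Omega\times\Omega$ by a normal-families argument. From part (1) (or directly from the extremal characterization and the fact that $\Omega^t\supset\Omega$ shrinks to $\overline\Omega$) one gets $K_{\Omega^t,\varphi}(w,w)\to K_{\Omega,\varphi}(w,w)$ for \emph{all} $w\in\Omega$, not just $w\in E$: indeed the limit is $\ge K_{\Omega,\varphi}(w,w)$ by restriction and $\le$ because any $f\in H^2(\Omega,\varphi)$ is approximated in norm by elements of $V$ by part (1). Then, for fixed $w$, the family $K_{\Omega^t,\varphi}(\cdot,w)/\sqrt{K_{\Omega^t,\varphi}(w,w)}$ is bounded in $L^2(K,\varphi)$ on compacts $K\subset\Omega$, hence (by the sub-mean-value property and local boundedness of $e^{-\varphi}$) a normal family; any subsequential limit is a holomorphic function $g$ with $\|g\|_{L^2(\Omega,\varphi)}\le1$ and $g(w)=\sqrt{K_{\Omega,\varphi}(w,w)}$, which by the extremal property forces $g=K_{\Omega,\varphi}(\cdot,w)/\sqrt{K_{\Omega,\varphi}(w,w)}$; uniqueness of the limit gives convergence of the full net, and polarization plus a standard equicontinuity estimate promotes this to local uniform convergence on $\Omega\times\Omega$.

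The main obstacle I anticipate is bookkeeping the role of the weight $\varphi$ in the monotonicity/extremal comparisons: on $\Omega^t\setminus\overline\Omega$ one is integrating $|f|^2 e^{-\varphi}$, and one must be sure that as $t\searrow0$ this "extra mass" does not obstruct the identification $\lim_t K_{\Omega^t,\varphi}(w,w)=K_{\overline V}(w,w)$. This is where local boundedness of $\varphi$ near $\overline\Omega$ and the hypothesis $\Omega^t\searrow\overline\Omega$ (so the extra region $\Omega^t\setminus\overline\Omega$ has measure tending to $0$) are used: a minimal-norm extension argument, together with dominated convergence, shows the extremal values match in the limit. Everything else is the classical Skwarczy\'nski/Ramadanov-type machinery.
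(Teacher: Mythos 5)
Your proof is correct, but it takes a genuinely different route from the paper's for part (1). The paper argues directly: since $E$ is a set of uniqueness, the linear span of $\{K_{\Omega,\varphi}(\cdot,w):w\in E\}$ is dense in $H^2(\Omega,\varphi)$ (anything orthogonal to it vanishes on $E$), and each generator is then approximated by $K_{\Omega^t,\varphi}(\cdot,w)\in H^2(\Omega^t,\varphi)$ through the explicit estimate
$\|K_{\Omega^t,\varphi}(\cdot,w)-K_{\Omega,\varphi}(\cdot,w)\|^2_{L^2(\Omega,\varphi)}\le K_{\Omega,\varphi}(w,w)-K_{\Omega^t,\varphi}(w,w)$,
obtained by expanding the square, applying the reproducing property of $K_{\Omega,\varphi}$ to the restriction of $K_{\Omega^t,\varphi}(\cdot,w)$, and using $\int_\Omega\le\int_{\Omega^t}$; hypothesis \eqref{eq:BergConverg} then makes the right-hand side vanish. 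You instead pass to $N=\overline V^{\perp}$, invoke the kernel decomposition $K_{\Omega,\varphi}=K_{\overline V}+K_N$, and identify $\lim_t K_{\Omega^t,\varphi}(w,w)=K_{\overline V}(w,w)$ to conclude $K_N=0$ on $E$. The uniqueness hypothesis enters both arguments in the same way, but your version needs the extra lemma identifying the limit with $K_{\overline V}(w,w)$ -- you correctly isolate the required ingredient, namely that for fixed $g\in H^2(\Omega^{t_0},\varphi)$ one has $\|g\|_{L^2(\Omega^s,\varphi)}\to\|g\|_{L^2(\Omega,\varphi)}$ by dominated convergence since $\Omega^s\searrow\overline{\Omega}$ and $|\partial\Omega|=0$ -- whereas the paper's computation bypasses this lemma and, as a bonus, yields the explicit approximation rate $K_{\Omega,\varphi}(w,w)-K_{\Omega^t,\varphi}(w,w)$ for the generators. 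For part (2) the two arguments essentially coincide (a monotone diagonal limit squeezed between the restriction bound and the density bound, then upgraded off the diagonal by polarization and the Bergman inequality). Two small slips that do not affect the substance: your labels ``$\ge$ by restriction, $\le$ by density'' are interchanged (enlarging the domain decreases the kernel), and ``$e^{-\varphi}$ locally bounded above and below'' should be only ``bounded below by a positive constant,'' since a psh $\varphi$ is locally bounded above but may take the value $-\infty$; the latter is all that the point-evaluation and normal-families steps require.
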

\begin{proof}
 
We first show that the linear space spanned by the set $\{K_{\Omega,\varphi} (\cdot,w):w\in E\}$, say $H$, lies dense in $H^2(\Omega, \varphi)$. To see this,   take $f\in (\overline{H})^\bot$.  For every $w\in E$,  we have
$$
0=\int_\Omega f\cdot K_{\Omega,\varphi} (w,\cdot)e^{-\varphi(\cdot)}  =f(w),
$$
by the reproducing property. Thus $f=0$ on $\Omega$. It follows that for each $f\in H^2(\Omega, \varphi)$,  one can take a sequence 
$$
f_k=\sum_{l=1}^{ k}  c^{(k)}_{ l} K_{\Omega,\varphi} (\cdot,w^{(k)}_{ l}  ), \ \ \  c^{(k)}_{ l}  \in \mathbb C,\ w^{(k)}_{ l}  \in E, 
$$
such that $\|f_k-f\|_{L^2(\Omega, \varphi)}\rightarrow 0$ as $k \rightarrow \infty. $  On the other hand,  for fixed $k$,  let $0<t\ll 1$,   we take 
$$
\tilde{f}^t_k:=\sum_{l=1}^{k}  c^{(k)}_{ l}  K_{\Omega^{t}, \varphi}(\cdot,w^{(k)}_{ l} )\in 
H^2(  \Omega^t, \varphi).
$$
Since
	\begin{eqnarray*}
		&& \int_\Omega |K_{\Omega^t, \varphi}(\cdot, w^{(k)}_{ l}  )-K_{\Omega, \varphi}(\cdot,w^{(k)}_{ l}  )|^2e^{-\varphi(\cdot)}   \\
		& = & \int_\Omega |K_{\Omega^t, \varphi}(\cdot,w^{(k)}_{ l} )|^2 e^{-\varphi(\cdot)}   + \int_\Omega |K_{\Omega, \varphi}(\cdot,w^{(k)}_{ l} )|^2e^{-\varphi(\cdot)}   \\
		&& -2{\rm Re\,} \int_\Omega K_{\Omega^t, \varphi}(\cdot,w^{(k)}_{ l} ) \overline{K_{\Omega, \varphi}(\cdot,w^{(k)}_{ l} )} e^{-\varphi(\cdot)}  \\
		& = & \int_\Omega |K_{\Omega^t, \varphi}(\cdot,w^{(k)}_{ l}  )|^2 e^{-\varphi(\cdot)}   - 2K_{\Omega^t, \varphi}(w^{(k)}_{ l},   w^{(k)}_{ l} ) +K_{\Omega^t, \varphi}(w^{(k)}_{ l} ,  w^{(k)}_{ l} ) \\ 
		& \le & K_{\Omega, \varphi}(w^{(k)}_{ l} ,  w^{(k)}_{ l}  )-K_{\Omega^t, \varphi}(w^{(k)}_{ l}  ,  w^{(k)}_{ l}  )\\
		& \rightarrow & 0,    \ \ \text{as } \ \   t \rightarrow 0 , 
	\end{eqnarray*}
we get  $\|\tilde{f}^t_k -f_k \|_{L^2(\Omega, \varphi)}\rightarrow 0$ as $ t \rightarrow 0. $  
Hence,  we have $\|\tilde{f}^t_k-f\|_{L^2(\Omega, \varphi)} \rightarrow 0$ as $ t \rightarrow 0,  k \rightarrow \infty $.  This completes the proof of $(1)$.   

In order to prove  $K_{\Omega^t, \varphi} (z,w)\rightarrow K_{\Omega, \varphi} (z,w)$ locally uniformly,  it is sufficient to show that the point-wise convergence on the ``diagonal'' in   view of the Bergman inequality.
Since $\Omega^t \searrow  \overline{\Omega}$ as $t \searrow 0$,  we see that $K_{\Omega^t, \varphi} (z )$ is  increasing and bounded above by $ K_{\Omega, \varphi} (z)$,  for every $z\in \Omega$.  Thus the sequence $ \{ K_{\Omega^t, \varphi} (z)\} _t$ has a limit,  which we denote by $\widetilde{K}(z)$.  It is obvious that $\widetilde{K}(z) \leq K_{\Omega, \varphi} (z) $ for every $z\in \Omega$.  On the other hand,  for $f \in  H^2(\Omega^t, \varphi) $ with $t \ll 1$, we have 
\begin{eqnarray}\label{eq:kernel1}
|f(z)|^2 \leq K_{\Omega^t, \varphi} (z ) \| f\|^2_{ L^2(\Omega^t, \varphi) }.
\end{eqnarray}
  Since $ \bigcup _{0<t\ll 1} H^2(  \Omega^t, \varphi) $  is dense in $ H^2(\Omega, \varphi)$, we see that (\ref{eq:kernel1}) remains valid for every $f\in  H^2(\Omega, \varphi)$.   Letting $t\rightarrow 0$, we obtain
  \begin{eqnarray*} 
|f(z)|^2 \leq \widetilde{K}(z)  \| f\|^2_{  L^2(\Omega, \varphi) },
\end{eqnarray*} 
for $f\in  H^2(\Omega, \varphi)$.  Hence, $ K_{\Omega, \varphi} (z) \leq \widetilde{K}(z)$,  for every $z\in \Omega$.    The proof is complete.
  \end{proof}
  
\section{Proofs of Theorem  \ref{th:moregeneral} and Corollary \ref{co:co1}}
Let $\Omega \subset \C^n$ be a bounded domain. We define the relative extremal function of a (fixed) closed ball $\overline{B}\subset \Omega$ by 
\begin{eqnarray*}
\rho (z) : = \rho_{\overline{B}}(z) : = \sup \{ u(z): u \in PSH^-(\Omega), u_{\overline{B}} \leq -1  \}.
\end{eqnarray*}
  Clearly, the upper semi-continuous regularization $\rho^*$ of $\rho$ is psh on $\Omega$. In order to prove Theorem \ref{th:moregeneral}, we need the following very useful estimate for $\rho^*$.

\begin{proposition} [\cite{ChenHolder}]\label{prop:ChenHolder}
	Let $\Omega$ be a bounded pseudoconvex domain in $\C^n$. Set $\Omega_s = \{z \in \Omega: \delta (z) >s \} $ for $s>0$. Suppose there exist a number $0< \alpha <1$ and a family  of psh functions $\{ \psi _s \}_{0 < s \leq s_0}$  on $\Omega$ satisfying 
	\begin{eqnarray*}
		\sup _{\Omega} \psi _s < 1  \ \ \text{and} \ \ \inf _{\Omega \setminus \Omega_{\alpha s }}  \psi_s> \sup _{\partial \Omega_s} \psi_s,
	\end{eqnarray*}
	for all $s$. Then we have 
	\begin{eqnarray*}
		\sup_{\Omega \setminus \Omega_t} (- \rho ^ * ) \leq \exp \left [- \frac{1}{ \log {1/\alpha }} \int_{t/ \alpha}^{t_0}  
		\frac{\kappa _\alpha (s)}{s}ds \right ],
	\end{eqnarray*}	
	where $t /\alpha < t_0 \ll 1$ and
	\begin{eqnarray*}
		\kappa _ \alpha (s) :  = \frac{  \inf _{\Omega \setminus \Omega_{\alpha s }}  \psi_s  - \sup _{\partial \Omega _s} \psi_s} { 1 - \sup _{\partial \Omega_s} \psi_s } .
	\end{eqnarray*}
\end{proposition}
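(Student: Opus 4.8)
The plan is to compare $\rho^{*}$, scale by scale along a geometric sequence, with explicit barriers built from the functions $\psi_{s}$. Fix $t_{0}\ll1$ (small enough that $\overline{B}\subset\Omega_{t_{0}}$) and set $s_{j}:=\alpha^{j}t_{0}\downarrow0$. I will construct inductively $u_{j}\in PSH^{-}(\Omega)$ with $u_{j}\le-1$ on $\overline{B}$, together with a constant $M_{j}\in(0,1]$, so that $u_{j}\ge-M_{j}$ on the shell $\Omega\setminus\Omega_{s_{j}}=\{\delta\le s_{j}\}$; the base case is $u_{0}\equiv-1$, $M_{0}=1$. Since $u_{j}$ is an admissible competitor in the definition of $\rho$, we get $\rho^{*}\ge\rho\ge u_{j}$ on $\Omega$, hence $\sup_{\Omega\setminus\Omega_{s_{j}}}(-\rho^{*})\le M_{j}$.

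The induction step is the crux. Given $u_{j}$, put $\psi:=\psi_{s_{j}}$, $a:=\sup_{\partial\Omega_{s_{j}}}\psi$ and $b:=\inf_{\Omega\setminus\Omega_{\alpha s_{j}}}\psi$; by hypothesis $a<b\le\sup_{\Omega}\psi<1$, so $1-a>0$ and $\kappa_{\alpha}(s_{j})=(b-a)/(1-a)\in(0,1)$. Define
\[
g_{j}:=\frac{M_{j}}{1-a}\,(\psi-1),
\]
a plurisubharmonic function on $\Omega$, negative there because $\psi<1$. Upper semicontinuity of $\psi$ gives $\limsup_{z\to\zeta}g_{j}(z)\le\frac{M_{j}}{1-a}(a-1)=-M_{j}$ for every $\zeta\in\partial\Omega_{s_{j}}$, while on $\{\delta\le\alpha s_{j}\}$, where $\psi\ge b$, one has $g_{j}\ge-M_{j}\frac{1-b}{1-a}=-(1-\kappa_{\alpha}(s_{j}))M_{j}$. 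Now paste:
\[
u_{j+1}:=\begin{cases}\max\{u_{j},\,g_{j}\}&\text{on }\Omega\setminus\overline{\Omega_{s_{j}}},\\ u_{j}&\text{on }\overline{\Omega_{s_{j}}}.\end{cases}
\]
Since $u_{j}\ge-M_{j}\ge\limsup g_{j}$ along $\{\delta=s_{j}\}$, the usual gluing lemma for plurisubharmonic functions shows $u_{j+1}$ is plurisubharmonic on all of $\Omega$; it is negative, equals $u_{j}\le-1$ on $\overline{B}\subset\overline{\Omega_{s_{j}}}$, and on $\{\delta\le\alpha s_{j}\}=\Omega\setminus\Omega_{s_{j+1}}$ satisfies $u_{j+1}\ge g_{j}\ge-M_{j+1}$ with $M_{j+1}:=(1-\kappa_{\alpha}(s_{j}))M_{j}$. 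This closes the induction.

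Iterating from $u_{0}\equiv-1$ gives $M_{N}=\prod_{j=0}^{N-1}(1-\kappa_{\alpha}(s_{j}))\le\exp\!\big(-\sum_{j=0}^{N-1}\kappa_{\alpha}(\alpha^{j}t_{0})\big)$, so $\sup_{\Omega\setminus\Omega_{s_{N}}}(-\rho^{*})\le M_{N}$ for each $N$. For arbitrary $t$ with $t/\alpha<t_{0}$, take the largest $N$ with $s_{N}\ge t$; then $\Omega\setminus\Omega_{t}\subset\Omega\setminus\Omega_{s_{N}}$, and the points $\alpha^{j}t_{0}$ ($0\le j<N$) lie in $[t/\alpha,\,t_{0}]$, equally spaced by $\log(1/\alpha)$ in the variable $\log s$, so a Riemann-sum comparison yields $\sum_{j=0}^{N-1}\kappa_{\alpha}(\alpha^{j}t_{0})\ge\frac{1}{\log(1/\alpha)}\int_{t/\alpha}^{t_{0}}\frac{\kappa_{\alpha}(s)}{s}\,ds$. (To dispense with any monotonicity of $\kappa_{\alpha}$, run the same iteration on every shifted grid $\{r\alpha^{j}\}$, $r\in[t_{0},t_{0}/\alpha]$, retain the best resulting bound, and use Fubini: the supremum over $r$ of the sums is at least their average over $r$, which is precisely that integral.) Combining the two estimates gives the claim.

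The only delicate point is the choice of $g_{j}$ together with the verification that the $\max$-patching yields a \emph{global} plurisubharmonic competitor: the normalization by $1-\sup_{\partial\Omega_{s}}\psi_{s}$ — exactly the denominator of $\kappa_{\alpha}$ — is what forces $g_{j}\le-M_{j}\le u_{j}$ along $\partial\Omega_{s_{j}}$ and so legitimizes the paste, while the gap hypothesis $\inf_{\Omega\setminus\Omega_{\alpha s}}\psi_{s}>\sup_{\partial\Omega_{s}}\psi_{s}$ is precisely what makes the gain factor $1-\kappa_{\alpha}(s)$ strictly less than $1$. Everything else — the gluing lemma, the extremal inequality $\rho\ge u_{N}$, and the Riemann-sum bookkeeping — is routine.
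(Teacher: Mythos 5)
Your proof is correct, and it follows essentially the same scheme as the source: the paper itself states this proposition without proof, quoting it from \cite{ChenHolder}, where the argument is likewise a scale-by-scale iteration in which the normalized barrier $\tfrac{\psi_s-1}{1-\sup_{\partial\Omega_s}\psi_s}$ is glued across $\partial\Omega_s$ to improve the bound on $-\rho^*$ by the factor $1-\kappa_\alpha(s)$ per step, after which the product is converted to the exponential of the integral. Your two refinements — building explicit competitors $u_j$ rather than comparing with $\rho^*$ directly, and averaging over shifted geometric grids to avoid assuming monotonicity of $\kappa_\alpha$ — are both sound and handled correctly.
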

Another key observation to prove Theorem \ref{th:moregeneral} is as follows.  The proof is parallel to  Proposition 1.2 in \cite{BFW2020} ( or Theorem 4 in \cite{Chen2000-1}).
\begin{proposition} \label{le:volumeestimate}
	Let $\Omega \subset \C^n$ be a bounded domain and $\varphi$  a psh function  in a neighbourhood of $\overline{\Omega}$.  Suppose there exist a family of pseudoconvex domains $\{\Omega^t\}_{0<t\le t_0}$,  a sequence of continuous psh functions $\rho_t < 0$ defined on $\Omega^t$ for all $t$,   a sequence of  positive numbers $\{\varepsilon_t \}_t$ and a constant $C>1$ such that 
	\begin{enumerate}
		\item  $\Omega^t_{-\varepsilon_t} \subset \Omega$  where $\Omega^t_{-s} = \{ z \in \Omega^t : \rho _t(z) <- s \}  $  for positive number $s$,
		\item   $|  \Omega \setminus \Omega^t_{- C \varepsilon_t}	| \rightarrow 0$ \ \ \ $( t \rightarrow 0 )$.
	\end{enumerate}
	Then $\mathcal O(\overline{\Omega}) \cap H^2(\Omega, \varphi) $ is dense in $H^2(\Omega, \varphi)$.
	\end{proposition}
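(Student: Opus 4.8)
The plan is to carry out a Hörmander-type $L^2$ approximation. Fix $f\in H^2(\Omega,\varphi)$; for each small $t$ I will produce a function $\widetilde f_t$ holomorphic on $\Omega^t$ — hence, since $\overline\Omega\Subset\Omega^t$, belonging to $\mathcal O(\overline\Omega)$ — which lies in $H^2(\Omega,\varphi)$ and satisfies $\|\widetilde f_t-f\|_{L^2(\Omega,\varphi)}\to0$ as $t\to0$. On $\Omega^t$ set $\psi_t:=-\log(-\rho_t)$. As $\rho_t<0$ is psh, $\psi_t$ is psh, and — the structural point of the argument — $-e^{-\psi_t}=\rho_t$ is psh, equivalently $i\partial\bar\partial\psi_t\ge i\partial\psi_t\wedge\bar\partial\psi_t$. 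In terms of $\psi_t$ the hypotheses become $\{\psi_t<\log(1/\varepsilon_t)\}\subset\Omega$ and $|\Omega\setminus\{\psi_t<\log(1/(C\varepsilon_t))\}|\to0$. After a standard smoothing of $\rho_t$ near $\overline\Omega$, which perturbs the constants in (1)--(2) harmlessly, choose a smooth $\chi_t\colon\R\to[0,1]$ with $\chi_t\equiv1$ on $(-\infty,\log(1/(C\varepsilon_t))]$, $\chi_t\equiv0$ on $[\log(1/\varepsilon_t),\infty)$ and $|\chi_t'|\le2/\log C$, and put $b_t:=\chi_t(\psi_t)$. Then $b_t\equiv1$ on $\Omega^t_{-C\varepsilon_t}$, $\operatorname{supp}b_t\subset\overline{\Omega^t_{-\varepsilon_t}}\subset\Omega$, and $\bar\partial b_t=\chi_t'(\psi_t)\,\bar\partial\psi_t$ is supported in $A_t:=\{\varepsilon_t\le-\rho_t\le C\varepsilon_t\}\subset\Omega\setminus\Omega^t_{-C\varepsilon_t}$; the $\psi_t$-width of this transition equals $\log C$, independent of $t$.

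Next I solve $\bar\partial$ on the pseudoconvex domain $\Omega^t$ with the psh weight $\Phi_t:=\varphi+2\psi_t$. The datum $v_t:=\bar\partial(b_tf)=f\,\bar\partial b_t$ is $\bar\partial$-closed, supported in $A_t\subset\Omega$, and points in the direction of $\bar\partial\psi_t$; hence $i\partial\bar\partial\Phi_t\ge2\,i\partial\psi_t\wedge\bar\partial\psi_t$ gives $|v_t|^2_{i\partial\bar\partial\Phi_t}\le\tfrac12|f|^2|\chi_t'(\psi_t)|^2<\infty$. Hörmander's theorem (applied in the usual way after making the weight strictly psh and letting the regularization tend to zero) yields $u_t$ with $\bar\partial u_t=v_t$ and $\int_{\Omega^t}|u_t|^2e^{-\Phi_t}\le\int_{\Omega^t}|v_t|^2_{i\partial\bar\partial\Phi_t}e^{-\Phi_t}\le\tfrac12\|\chi_t'\|_\infty^2\int_{A_t}|f|^2e^{-\Phi_t}$. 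Now $e^{-\Phi_t}=(-\rho_t)^2e^{-\varphi}$: it is $\le(C\varepsilon_t)^2e^{-\varphi}$ on $A_t$ and $\ge(C\varepsilon_t)^2e^{-\varphi}$ on $\Omega^t_{-C\varepsilon_t}$, so the factors $\varepsilon_t^2$ cancel and
$$\int_{\Omega^t_{-C\varepsilon_t}}|u_t|^2e^{-\varphi}\ \lesssim\ \int_{A_t}|f|^2e^{-\varphi}\ \le\ \int_{\Omega\setminus\Omega^t_{-C\varepsilon_t}}|f|^2e^{-\varphi}\ \longrightarrow\ 0,$$
the limit holding because $|f|^2e^{-\varphi}\in L^1(\Omega)$ and $|\Omega\setminus\Omega^t_{-C\varepsilon_t}|\to0$ by hypothesis (2).

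Finally set $\widetilde f_t:=b_tf-u_t\in\mathcal O(\Omega^t)$. Since $-\rho_t$ is bounded below on $\overline\Omega\Subset\Omega^t$, the finiteness of $\int_{\Omega^t}|\widetilde f_t|^2e^{-\Phi_t}$ forces $\widetilde f_t\in H^2(\Omega,\varphi)$ (in particular $\widetilde f_t$ inherits whatever vanishing the singular set of $\varphi$ imposes on $f$), and $\widetilde f_t\in\mathcal O(\overline\Omega)$. On $\Omega^t_{-C\varepsilon_t}$ one has $\widetilde f_t-f=-u_t$, contributing $\int_{\Omega^t_{-C\varepsilon_t}}|u_t|^2e^{-\varphi}\to0$. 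On the shell $\Omega\setminus\Omega^t_{-C\varepsilon_t}$, whose Lebesgue measure tends to $0$, bound $\int|\widetilde f_t-f|^2e^{-\varphi}\lesssim\int_{\Omega\setminus\Omega^t_{-C\varepsilon_t}}(|f|^2+|u_t|^2)e^{-\varphi}$; the $|f|^2$-part $\to0$ as before, and $|u_t|^2$ is treated separately on $\{-\rho_t\ge\varepsilon_t\}$ (divide by $(-\rho_t)^2\ge\varepsilon_t^2$ and use the $\bar\partial$-estimate again, the $\varepsilon_t^2$ cancelling) and on $\{-\rho_t<\varepsilon_t\}$ (there $b_t\equiv0$ on an open set, so $u_t=-\widetilde f_t$ is holomorphic, and one uses the sub-mean-value inequality for $|u_t|^2$ together with the smallness of the shell). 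Collecting everything — as in \cite[Prop.~1.2]{BFW2020} (or \cite[Thm.~4]{Chen2000-1}) — gives $\|\widetilde f_t-f\|_{L^2(\Omega,\varphi)}\to0$, proving the proposition.

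I expect the real obstacle to be exactly this last estimate, the behaviour of $u_t$ on the thin shell $\{-\rho_t<\varepsilon_t\}\cap\Omega$: there the weight $e^{-\Phi_t}=(-\rho_t)^2e^{-\varphi}$ degenerates relative to $e^{-\varphi}$, so the weighted $\bar\partial$-estimate no longer dominates $\int|u_t|^2e^{-\varphi}$ directly, and one must play off the smallness of $|\Omega\setminus\Omega^t_{-C\varepsilon_t}|$ against interior sub-mean-value bounds for the holomorphic function $u_t=-\widetilde f_t$ — this is the point carried out in the cited references. A secondary, more routine matter is the regularization of the merely continuous $\rho_t$ while preserving the inclusions (1)--(2) up to inessential changes of the constants.
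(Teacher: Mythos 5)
Your overall architecture is the right one and is essentially what the paper intends (the paper itself only points to \cite{BFW2020} and \cite{Chen2000-1}, but the same machinery is written out in its proofs of Theorems \ref{th:Main2} and \ref{th:deltabijin1}): cut off $f$ by $b_t=\chi_t(\psi_t)$ with $\psi_t=-\log(-\rho_t)$ and a transition of fixed width $\log C$, solve $\dbar$ on $\Omega^t$, correct, and use the absolute continuity of $\int|f|^2e^{-\varphi}$ together with hypothesis (2). The place where your argument does not close is exactly the one you flag at the end, and it is a genuine gap rather than a routine verification to be delegated to the references. Solving $\dbar$ by plain H\"ormander in the weight $\Phi_t=\varphi+2\psi_t$ only yields $\int_{\Omega^t}|u_t|^2(-\rho_t)^2e^{-\varphi}\lesssim\varepsilon_t^2\int_{A_t}|f|^2e^{-\varphi}$, which gives no control of $\int|u_t|^2e^{-\varphi}$ on $\Omega\cap\{-\rho_t<\varepsilon_t\}$, where $(-\rho_t)^2$ is not bounded below by any multiple of $\varepsilon_t^2$. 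That set is not negligible: in the application to Theorem \ref{th:moregeneral} it is a full collar $\{\varrho^{\ast}_{t'}>-\varepsilon_t\}\cap\Omega$ along $\partial\Omega$. Your proposed rescue by the sub-mean-value inequality cannot work as stated: within the shell, $u_t$ is holomorphic only on the open set $\{-\rho_t<\varepsilon_t\}$, which is separated from $\Omega^t_{-C\varepsilon_t}$ by the transition region $A_t$ where $u_t$ is not holomorphic, so the admissible balls shrink to zero near $\{-\rho_t=\varepsilon_t\}$; and the only $L^2$ quantity you could average there carries the degenerate weight $(-\rho_t)^2$. The smallness of the Lebesgue measure of the shell does not compensate, because you have no a priori pointwise or unweighted $L^2$ bound on $u_t=-\widetilde f_t$ there to multiply it against.

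The fix is a different $L^2$ estimate, not a finer decomposition: take $u_t$ to be the minimal solution in $L^2(\Omega^t,\varphi)$ (weight $\varphi$ alone) and apply the Donnelly--Fefferman estimate in the Berndtsson--Charpentier/B\l ocki form (\cite{B1996}, \cite{Blocki2004}), exactly as in the paper's proof of Theorem \ref{th:Main2}. Since $-e^{-\psi_t}=\rho_t$ is psh, i.e. $i\partial\dbar\psi_t\ge i\partial\psi_t\wedge\dbar\psi_t$, one gets
\[
\int_{\Omega^t}|u_t|^2e^{-\varphi}\ \le\ C_0\int_{\Omega^t}|f|^2\,\bigl|\dbar\chi_t(\psi_t)\bigr|^2_{i\partial\dbar\psi_t}\,e^{-\varphi}\ \le\ C_0\|\chi_t'\|_\infty^2\int_{A_t}|f|^2e^{-\varphi}\ \longrightarrow\ 0,
\]
with no factor $(-\rho_t)^2$ left on the left-hand side; then
$\|\widetilde f_t-f\|^2_{L^2(\Omega,\varphi)}\le 2\int_{\Omega\setminus\Omega^t_{-C\varepsilon_t}}|f|^2e^{-\varphi}+2\int_{\Omega^t}|u_t|^2e^{-\varphi}\to0$ in one stroke, with no good/bad splitting at all. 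This form of the estimate also disposes of your secondary worry about regularizing the merely continuous $\rho_t$, since $|\cdot|^2_{i\partial\dbar\psi_t}$ can be interpreted in the sense of currents, as the paper explains after the corresponding display in Section 4. The remaining ingredients of your write-up --- the uniform bound $|\chi_t'|\le 2/\log C$, the inclusion of $\operatorname{supp}\dbar b_t$ in $\Omega\setminus\Omega^t_{-C\varepsilon_t}$, the absolute-continuity step, and the verification that $\widetilde f_t\in\mathcal O(\overline{\Omega})\cap H^2(\Omega,\varphi)$ --- are correct and can be kept as they are.
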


\begin{proof}  [Proof of Theorem \ref{th:moregeneral} ]         
	Let $\varrho_t$ (resp. $\varrho$) be the relative extremal function of a closed ball $\overline{B}\subset \Omega$ relative to $\Omega^t$ (resp. $\Omega$). Fix $0<\varepsilon <1$. For every $t$ the function 
	$$
	\psi_t :=\frac{\log 1/\delta_{\Omega^{\varepsilon t}}}{\log 2/\eta(\varepsilon t)}
	$$ 
	is psh  on $\Omega^{t'}$ and satisfies $\sup_{\Omega^{t'}}\psi_t<1$ for every $t'\ll \eta(\varepsilon t)$. Since    
	$$
	t\le  \delta_{\Omega^{\varepsilon t}}(z) \le t +\varepsilon t\ \ \ \text{for}\ \ \  \delta_{\Omega^{t'}}(z)=t,
	$$             
	we have
	$$
	\frac{\log 1/(t+\varepsilon t)}{\log 2/\eta(\varepsilon t)}\le \sup_{\delta_{\Omega^{t'}}=t} \psi_t \le \frac{\log 1/t}{\log 2/\eta(\varepsilon t)}.
	$$
	On the other hand, for $z\in \Omega^{t'}$ with $\delta_{\Omega^{t'}}(z)\le \alpha t$ we have
	$
	\delta_{\Omega^{\varepsilon t}}(z) \le \alpha t + \varepsilon t,
	$
	so that 
	$$
	\inf_{\delta_{\Omega^{t'}}\le \alpha t}\psi_t \ge \frac{\log 1/(\alpha t+\varepsilon t)}{\log 2/\eta(\varepsilon t)}.
	$$
	Fix $\alpha,\varepsilon$ with $\alpha+\varepsilon<1$. Then we have
	$$
	\kappa_\alpha(t) \ge \frac{\log 1/(\alpha+\varepsilon)}{\log [(2+2\varepsilon)t/\eta(\varepsilon t)]}.
	$$
	It follows from Proposition \ref{prop:ChenHolder}  that for a suitable positive constant $\tau$ depending only on $\alpha,\varepsilon$,
	$$
	\varepsilon_t:=    \sup_{\delta_{\Omega^{t'}}\le t} \left( -\varrho_{t'}^\ast \right)  \le  \exp\left(-\tau \int_{t/\alpha}^{ r_0}\frac{ds}{s\log [s/\eta(\varepsilon s)]}  \right)\\
	\rightarrow  0,
	$$ 
	as $t\rightarrow 0$.
	Thus we have  
	\begin{equation}\label{eq:1}
	\left\{z\in\Omega^{t'}: \varrho_{t'}^\ast(z)<-\varepsilon_t\right\}\subset \left\{z\in \Omega^{t'}: \delta_{\Omega^{t'}}(z)\ge t\right\}\subset\subset \Omega,
	\end{equation}
	provided $t'$ is sufficiently small.
	On the other hand, since $\varrho^\ast(z)\ge \varrho_{t'}^\ast(z)$ for all $z\in \Omega$, 
	we have
	$$
	\left\{z\in \Omega: \varrho_{t'}^\ast(z)\ge- 2 \varepsilon_t\right\}\subset \left\{z\in \Omega: \varrho^\ast(z)\ge -2\varepsilon_t\right\}.
	$$
	By Proposition 5.1 in \cite{ChenHolder}, we know that $\varrho=\varrho^\ast$ is a negative continuous psh exhaustion function on $\Omega$. Thus we obtain
	\begin{equation}\label{eq:2}
	\left| \left\{z\in \Omega: \varrho_{t'}^\ast(z)\ge- 2 \varepsilon_t\right\}\right| \rightarrow 0 ,     
	\end{equation}
	as $t\rightarrow 0$. The assertion follows from Proposition \ref{le:volumeestimate}.
\end{proof}

\begin{proof}[Proof of Corollary \ref{co:co1}]
	Suppose $\{\Omega^t\}_{0<t\le t_0}$ is a Stein neighbourhood basis of $\overline{\Omega}$. By Theorem \ref{th:moregeneral}, for every $f\in H^2(\Omega, \varphi)$ and every $\varepsilon >0$ there exists $F\in \mathcal{O}(\Omega^t)$ so that 
	\begin{eqnarray*}
		\int_{\Omega}|F-f|^2  e^{-\varphi}   < \varepsilon.
	\end{eqnarray*}	
	Since $\overline {\Omega}$ is polynomially convex,   $F$ can be approximated uniformly by polynomials on the compact set $\overline{\Omega}$ by the theorem of Oka-Weil. Thus,  we have a sequence of polynomials $P_k$ such that 
	$$
	\int_\Omega | P_k - F|^2 e^{-\varphi}   \leq  \sup_{\overline{\Omega}} | P_k - F|^2\int _\Omega e^{-\varphi}   < \varepsilon \int _\Omega e^{-\varphi}  = : \varepsilon M,
	$$
	when $k$ is very large, 
	which implies that we can find a sequence of polynomials $P_k$ such that 
	$$
	\int _\Omega |f - P_k |^2 e^{-\varphi }    < C \varepsilon,
	$$
	for sufficiently large $k$.
	The proof is complete.	
\end{proof}

\section{Proofs of  Theorem \ref{cor:Main22} and Theorem \ref{th:deltabijin1}}

In order to prove Theorem  \ref{cor:Main22},  we will first give a  $L^2$-approximation theorem as follows.   
\begin{theorem}\label{th:Main2}
	Let $\Omega$ be a bounded pseudoconvex domain in $\mathbb C^n$ and $\varphi$  a psh  function in a neighbourhood of $\overline{\Omega}$. Let $E$ be a compact set of uniqueness for $H^2(\Omega, \varphi)$ (e.g., $E$ is a closed ball in $\Omega$).  Set
	$$
	\nu_E(t):=\sup_{w\in E} \int_{\delta_\Omega<t} |K_{\Omega, \varphi}(\cdot,w)|^2 e^{-\varphi(\cdot)} ,\ \ \ t>0.
	$$
	Suppose  there exists a Stein neighbourhood basis  $\{\Omega^t\}_{0<t\le t_0\ll 1}$ of\/  $\overline{\Omega}$\/ such that    
	$$
	\eta(t)\le  \delta_{\Omega^t}(z)\le  t,\ \ \ \forall\,z\in \partial \Omega,
	$$ 
	where $\eta$ satisfies
	\begin{eqnarray*}
		\sqrt {\nu_E(2 t)} \cdot \log (2t /{\eta(t)}) \rightarrow 0,
	\end{eqnarray*}
as $t\rightarrow 0$.
	Then $\mathcal O(\overline{\Omega})\cap H^2(\Omega, \varphi)$ is dense in $H^2(\Omega, \varphi)$.
\end{theorem}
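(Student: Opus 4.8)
The plan is to reduce the density statement to a convergence of Bergman kernels and then invoke Theorem~\ref{th:Sk1}. Passing to a decreasing subfamily if necessary (the bounds $\eta(t)\le\delta_{\Omega^t}\le t$ on $\partial\Omega$ persist along any subfamily), one may assume $\Omega^t\searrow\overline{\Omega}$ as $t\to 0$. Since each $\Omega^t$ is a Stein neighbourhood of $\overline{\Omega}$, every $F\in H^2(\Omega^t,\varphi)$ restricts to a function in $\mathcal O(\overline{\Omega})\cap H^2(\Omega,\varphi)$; moreover $t\mapsto K_{\Omega^t,\varphi}(w,w)$ is increasing as $t\downarrow 0$ and bounded above by $K_{\Omega,\varphi}(w,w)$ for every $w\in\Omega$. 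Hence, by Theorem~\ref{th:Sk1} applied with the set of uniqueness $E$ (the boundary $\partial\Omega$ being Lebesgue-null), it suffices to prove
\[
K_{\Omega^t,\varphi}(w,w)\longrightarrow K_{\Omega,\varphi}(w,w)\qquad(t\to 0)
\]
for every $w\in E$; equivalently, that $K_{\Omega,\varphi}(\cdot,w)$ is approximable in $L^2(\Omega,\varphi)$, with asymptotically no loss of its value at $w$, by functions holomorphic on the $\Omega^t$.

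Fix $w\in E$ and put $f:=K_{\Omega,\varphi}(\cdot,w)$, so that $\|f\|_{L^2(\Omega,\varphi)}^2=f(w)=K_{\Omega,\varphi}(w,w)=:\kappa$. I would build the approximant by a $\overline{\partial}$-correction on $\Omega^t$. Using $\delta_{\Omega^t}\ge\delta_\Omega$ on $\Omega$ together with $\eta(t)\le\delta_{\Omega^t}\le t$ on $\partial\Omega$, choose a smooth cut-off $\chi_t$ on $\C^n$, adapted to a regularisation of $-\log\delta_{\Omega^t}$, such that: $\chi_t\equiv 1$ on $\{\delta_{\Omega^t}\ge 2t\}$, hence near $w$ once $t$ is small; $\chi_t\equiv 0$ on a full neighbourhood of $\partial\Omega$, so $\chi_t f$ extends smoothly by $0$ to all of $\Omega^t$; and $\operatorname{supp}\overline{\partial}\chi_t$ is contained in the thin collar $\{z\in\Omega:\delta_\Omega(z)<2t\}$, on which the coordinate $-\log\delta_{\Omega^t}$ oscillates by at most $\log(2t/\eta(t))$. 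Then $v_t:=\overline{\partial}(\chi_t f)=f\,\overline{\partial}\chi_t$ is a smooth $\overline{\partial}$-closed $(0,1)$-form on the pseudoconvex domain $\Omega^t$, supported in that collar.

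Next, one solves $\overline{\partial}u_t=v_t$ on $\Omega^t$ by a weighted $L^2$-estimate of Donnelly--Fefferman / Berndtsson type, using the weight $e^{-\varphi}$ and an auxiliary plurisubharmonic function $\psi_t$ on $\Omega^t$ modelled on $-\log\delta_{\Omega^t}$ (for instance with self-bounded gradient, $i\partial\overline{\partial}\psi_t\ge i\partial\psi_t\wedge\overline{\partial}\psi_t$; if $-\log\delta_{\Omega^t}$ itself fails this on a general pseudoconvex $\Omega^t$, one replaces it by $-\log(-\omega_t)$, $\omega_t$ being the relative extremal function of a fixed closed ball $\overline B\subset\Omega$ relative to $\Omega^t$, as in the proof of Theorem~\ref{th:moregeneral}), with $\chi_t$ taken of the form $\gamma(\psi_t)$. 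Since then $\overline{\partial}\chi_t$ is a scalar multiple of $\overline{\partial}\psi_t$ and the transition layer has logarithmic width $\log(2t/\eta(t))$, such an estimate yields
\[
\int_{\Omega^t}|u_t|^2e^{-\varphi}\ \le\ C\,\bigl(\log(2t/\eta(t))\bigr)^2\int_{\{\delta_\Omega<2t\}}|f|^2e^{-\varphi}\ \le\ C\,\bigl(\log(2t/\eta(t))\bigr)^2\,\nu_E(2t),
\]
which tends to $0$ as $t\to 0$ by the hypothesis $\sqrt{\nu_E(2t)}\,\log(2t/\eta(t))\to 0$. Put $F_t:=\chi_t f-u_t$. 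Then $\overline{\partial}F_t=0$ and $\|F_t\|_{L^2(\Omega^t,\varphi)}\le\sqrt{\kappa}+\|u_t\|_{L^2(\Omega^t,\varphi)}=\sqrt{\kappa}+o(1)$, so $F_t\in H^2(\Omega^t,\varphi)$; and $u_t=\chi_t f-F_t$ is holomorphic on $\{\delta_{\Omega^t}\ge 2t\}\ni w$, so the sub-mean-value inequality over a fixed ball about $w$ gives $|u_t(w)|^2\le C_w\int_{\Omega^t}|u_t|^2e^{-\varphi}\to 0$ and hence $F_t(w)=\kappa-o(1)$. Consequently
\[
K_{\Omega,\varphi}(w,w)\ \ge\ K_{\Omega^t,\varphi}(w,w)\ \ge\ \frac{|F_t(w)|^2}{\|F_t\|_{L^2(\Omega^t,\varphi)}^2}\ \ge\ \frac{\kappa^2-o(1)}{\kappa+o(1)}\ \longrightarrow\ \kappa\,,
\]
so $K_{\Omega^t,\varphi}(w,w)\to K_{\Omega,\varphi}(w,w)$ for every $w\in E$, and Theorem~\ref{th:Sk1}(1) finishes the proof.

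I expect the genuine difficulty to be concentrated in the $\overline{\partial}$-step: one must match the cut-off $\chi_t$ with the auxiliary plurisubharmonic weight $\psi_t$ on $\Omega^t$ so that the Donnelly--Fefferman estimate only feels the small Bergman-kernel mass $\nu_E(2t)$ carried by the collar $\{\delta_\Omega<2t\}$, amplified by no worse than the squared logarithm $(\log(2t/\eta(t)))^2$ of its relative thickness. It is exactly here that the lower bound $\delta_{\Omega^t}\ge\eta(t)$ enters — it keeps the logarithmic width of the transition layer finite — and one must simultaneously arrange that $\chi_t=\gamma(\psi_t)$ is $1$ deep inside and vanishes near $\partial\Omega$ while its $\overline{\partial}$ still lives in the controlled collar, coping with the discrepancy between $\delta_{\Omega^t}$ and $\delta_\Omega$ near $\partial\Omega$ and with the possible degeneracy of the complex Hessian of $-\log\delta_{\Omega^t}$ on an arbitrary pseudoconvex $\Omega^t$.
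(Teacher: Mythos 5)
Your overall architecture coincides with the paper's: reduce to $K_{\Omega^t,\varphi}(w,w)\to K_{\Omega,\varphi}(w,w)$ for $w\in E$ via Theorem \ref{th:Sk1}, cut off $f=K_{\Omega,\varphi}(\cdot,w)$ near $\partial\Omega$, solve $\overline\partial$ on a Stein neighbourhood by a Donnelly--Fefferman estimate, and feed the hypothesis $\sqrt{\nu_E(2t)}\log(2t/\eta(t))\to 0$ into the error term. Your choice of fixing the outer radius of the collar at $2t$ is in fact a legitimate simplification of the paper, which instead optimizes the outer radius $d_t$ self-consistently (solving $\log(d_t/t)=\tau_t\sqrt{\nu_w(d_t)}\log(2t/\eta(t))$ with $\tau_t\to\infty$); with the hypothesis as stated, your direct choice $d_t=2t$ suffices and the end inequality $\|u_t\|^2\lesssim(\log(2t/\eta(t)))^2\,\nu_E(2t)$ is exactly what the paper obtains in that case.

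The genuine gap is precisely at the step you defer to the last paragraph, and your sketched mechanism for it does not work as written. If $\psi_t$ has self-bounded gradient and $\chi_t=\gamma(\psi_t)$ transitions over an interval of length $L$ in the $\psi_t$-variable, Donnelly--Fefferman gives the factor $|\gamma'|^2\le C/L^2$, i.e.\ a \emph{gain} of $L^{-2}$, not an amplification by $L^2$; and the transition of your cut-off must in any case be confined to $\{t\le\delta_{\Omega^t}\le 2t\}$ (it must vanish on all of $\{\delta_{\Omega^t}\le t\}\supset\partial\Omega$ so that $\chi_t f$ extends by zero, and $\overline\partial\chi_t$ must live in $\{\delta_\Omega<2t\}$), where $-\log\delta_{\Omega^t}$ oscillates only by $\log 2$, not by $\log(2t/\eta(t))$. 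So neither the width you quote nor the sign of the exponent matches the estimate you assert, and the relative extremal function $\omega_t$ is the wrong auxiliary object here (quantifying its boundary behaviour is the content of Proposition \ref{prop:ChenHolder} and belongs to Theorem \ref{th:moregeneral}). The missing idea is the iterated logarithm: by Oka's lemma $\rho_t:=-\log\bigl(2\delta_{\Omega^t}/\eta(t)\bigr)$ is negative and psh on $\Omega^{t'}:=\{\delta_{\Omega^t}>\eta(t)/2\}\supset\overline\Omega$ (this is where the lower bound $\delta_{\Omega^t}\ge\eta(t)$ enters), hence $-\log(-\rho_t)$ automatically has self-bounded gradient, and the cut-off must be taken as a function of $-\log(-\rho_t)$. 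In that variable the layer $\{t\le\delta_{\Omega^t}\le 2t\}$ is compressed to an interval of length about $\log 2/\log(2t/\eta(t))$, and it is this compression --- governed by the depth $\log(2t/\eta(t))$ of the collar separating $\{\delta_{\Omega^t}=t\}$ from $\{\rho_t=0\}$ --- that produces the factor $(\log(2t/\eta(t)))^2$ you wrote down. Your final estimate is therefore correct, but the derivation you propose for it would either fail (wrong power of $L$) or stall (relative extremal function), and supplying this construction is the actual content of the proof.
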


\begin{proof}[Proof of Theorem \ref{th:Main2}]
	Set 
	$$
	\rho_t=-\log (2\delta_{\Omega^t}/\eta(t)) ,\ \ \ \Omega^{t'}=\{z\in \Omega^t: \delta_{\Omega^t}>\eta(t)/2\}.
	$$
	By Oka's lemma, we see that $\Omega^{t'}$ is a pseudoconvex domain  and $\rho_t\in PSH^-(\Omega^{t'})$. Moreover,  $\overline{\Omega}\subset \Omega^{t'}$.  We  choose two sequences of positive numbers as follows
	$$
	a_t=\log (2 t / \eta(t)),\ \ \  b_t=(1+\varepsilon_t)a_t,
	$$
	where $\varepsilon_t\rightarrow 0+$ will be determined later.  Let $\chi_t:\mathbb R\rightarrow [0,1]$ be a smooth cut-off function satisfying 
	$\chi_t=1$ on $(-\infty,-\log b_t]$, $\chi_t=0$ on $[-\log a_t,\infty)$, and
	$$
	|\chi_t'|\le \frac1{\log b_t-\log a_t}=\frac1{\log(1+\varepsilon_t)}\le \frac2{\varepsilon_t},
	$$
	for $t$ sufficiently small.  Fix $w\in E$ for a moment. Since $\{\delta_{\Omega^t}(z)>t\}\subset \Omega$, it follows  that 
	$$
	\chi_t(-\log(-\rho_t))\, K_{\Omega, \varphi} (\cdot,w)
	$$
	is a smooth function on $\Omega^{t'}$. 
	Let $u_t$ be the $L^2(\Omega^{t'}, \varphi)$ minimal solution of 
	$$
	\bar{\partial} u = K_{\Omega, \varphi} (\cdot,w) \bar{\partial} \chi_t(-\log(-\rho_t)) =: v_t.
	$$
	By the classical Donnelly-Fefferman estimate ( see \cite{B1996}, Theorem 3.1; or \cite{Blocki2004}),  
	 we have
	\begin{eqnarray*}
		\int_{\Omega^{t'}} |u_t|^2 e^{-\varphi}  & \le & C_0 \int_{\Omega^{t'}} |\bar{\partial} \chi_t(\cdot)|^2_{-i\partial\bar{\partial} \log (-\rho_t)} 
		|K_{\Omega, \varphi}(\cdot,w)|^2 e^{-\varphi(\cdot)} \\
		& \le & \frac{C_0}{\varepsilon_t^2} \,  \int_{a_t\le-\rho_t\le b_t}  |K_{\Omega, \varphi}(\cdot,w)|^2 e^{-\varphi(\cdot)} ,
	\end{eqnarray*}
	where $C_0$ is a universal constant and $|\bar{\partial} \chi_t(\cdot)|^2_{-i\partial\bar{\partial} \log (-\rho_t)}$ should be understood as the infimum of nonnegative locally bounded functions $H$ satisfying $i \bar{\partial} \chi_t  \wedge \partial  \chi_t  \leq H i \partial\bar{\partial} (-\log (-\rho_t))$ as currents.  Note that 
$$
a_t\le -\rho_t\le b_t \iff t \le \delta_{\Omega^t}\le \frac{\eta(t)}2\, e^{b_t}=:d_t.
$$
	Since $\delta_{\Omega^t}\ge \delta_\Omega$, we have
	$$
	\{a_t\le-\rho_t\le b_t\}\subset \{\delta_\Omega\le d_t\},
	$$
	so that 
	$$
	\int_{\Omega^{t'}} |u_t|^2 e^{-\varphi}  \le \frac{C_0}{\varepsilon_t^2} \,  \nu_w(d_t),
	$$
	where 
	$$
	\nu_w(t)=\int_{\delta_\Omega<t} |K_{\Omega, \varphi}(\cdot,w)|^2e^{-\varphi(\cdot)} .
	$$
	We require that $d_t\rightarrow 0$.
	As 
	$
	\{\delta_\Omega > d_t\}$ is contained in $\{-\rho_t>b_t\}$, where $v_t=0$, i.e., $u_t$ is holomorphic, the mean-value inequality implies 
	$$
	|u_t(w)|^2 \le C \int_\Omega |u_t|^2\le C 
	\int_\Omega |u_t|^2 e^{-\varphi}  \le  
	\frac{C}{\varepsilon_t^2}  \, \nu_w(d_t),
	$$ 
	where $C$ is a  constant depending only on $E,\Omega$. It follows that
	$$
	f_t:=\chi_t(-\log(-\rho_t))K_{\Omega, \varphi} (\cdot,w)-u_t
	$$
	is  holomorphic on $\Omega^{t'}$,  and since $u_t\bot H^2(\Omega^{t'}, \varphi)$ and $f_t\in H^2(\Omega^{t'}, \varphi)$,  we have
	$$
	\chi_t(-\log(-\rho_t))K_{\Omega, \varphi}(\cdot,w)= f_t\oplus u_t. 
	$$
	Thus
	\begin{eqnarray*}
		\|f_t\|_{L^2(\Omega^{t'}, \varphi)} & \le &   \|\chi_t(-\log(-\rho_t))K_{\Omega, \varphi} (\cdot,w)\|_{L^2(\Omega^{t'}, \varphi)}   \\
		& \le &    \| K_{\Omega, \varphi}(\cdot,w)\|_{L^2(\Omega, \varphi)} =K_{\Omega, \varphi}(w,w)^{1/2}.                     
	\end{eqnarray*}
Moreover, 
	$$
	|f_t(w)|\ge K_{\Omega, \varphi}(w,w)-|u_t(w)|\ge K_{\Omega, \varphi}(w,w)- \frac{C}{\varepsilon_t}\, \sqrt{\nu_w (d_t)}.
	$$
	Thus

	\begin{eqnarray*}
	K_{\Omega^{t'}, \varphi}(w,w)\ge \frac{|f_t(w)|^2}{\|f_t\|_{L^2(\Omega^{t'}, \varphi)}^2} &\ge &\left(  \frac{K_{\Omega, \varphi}(w,w)-\frac{ C}{\varepsilon_t}\,  \sqrt{\nu_w (d_t)} }{ K_{\Omega, \varphi}(w,w)^{1/2}  } \right) ^2 \\ \nonumber 
	&\ge &  K_{\Omega, \varphi}(w,w)- \frac{2C}{\varepsilon_t}\, \sqrt{\nu_w (d_t)}.
\end{eqnarray*}
Since $ K_{\Omega^{t'}, \varphi}(w,w)\le  K_{\Omega, \varphi}(w,w)$, we conclude that $K_{\Omega^{t'}, \varphi}(w,w)\rightarrow K_{\Omega, \varphi}(w,w)$ for every $w\in E$.  Consequently, $\mathcal O(\overline{\Omega})\cap H^2(\Omega, \varphi )$ is dense in $H^2(\Omega, \varphi)$,  in view of Theorem \ref{th:Sk1},  provided 
	\begin{equation}\label{eq:converge}
	\frac{\nu_w(d_t)}{\varepsilon_t^2} \rightarrow 0,\ \ \ \forall\,w\in E,   \ \ \ \text{as} \ \  d_t \rightarrow  0.
	\end{equation}
	Let us choose a sequence of positive numbers $\{\tau_t\}$ such that $\tau_t \rightarrow \infty$ and 
	$$
	\tau_t \sqrt{\nu_E(2t)}\,  \log(2t/\eta(t))\rightarrow 0.
	$$
	Clearly,    (\ref{eq:converge}) holds whenever 
	\begin{equation*}\label{eq:epsilon}
	\varepsilon_t=\tau_t \sqrt{\nu_w (d_t)}.
	\end{equation*}
	Since $d_t=\frac{\eta(t) }2e^{b_t}$, it follows that $d_t$ has to verify the following
	\begin{equation}\label{eq:key}
	\log \frac{d_t}{t} = \tau_t \sqrt{\nu_w (d_t)}\, \log (2t/\eta(t)).
	\end{equation}
	In order to solve the above equation, we consider the following continuous function
	$$
	\gamma(x):=\log \frac{x}{t}-\tau_t \sqrt{\nu_w (x)}\, \log (2 t/\eta(t)),\ \ \ x>0.
	$$
	Since $\gamma(0+)=-\infty$ and $\gamma(2t)>0$ for all $t$ sufficiently small, there exists $0<d_t<2t$ such that $\gamma(d_t)=0$, i.e., (\ref{eq:key}) holds. 
\end{proof}

We also need the following $L^2$-boundary decay estimate  for the weighted Bergman kernel (see \cite{Chen2017} for the standard unweighted case).

\begin{proposition} \label{prop:estimatedecay}
	Let $\Omega \subset \C^n$ be a pseudoconvex domain and  $\rho$ a negative continuous psh function on $\Omega$.   Let $\rm{S}\subset \subset \Omega$ be a subdomain and $\varphi$ a psh function on $\Omega$. Then for every $0< r<1$, there exists a constant C which depends on $S$, $r$ such that
	\begin{eqnarray*}
		\int _{-\rho < \varepsilon} |K_{\Omega, \varphi} (\cdot, w)|^2 e^{-\varphi(\cdot)}   \leq C\varepsilon^r,
	\end{eqnarray*}
for all $w\in S$. 
\end{proposition}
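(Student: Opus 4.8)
The plan is to follow the unweighted case \cite{Chen2017}: reduce the estimate to a single uniform weighted $L^2$-bound on the Bergman kernel, and deduce that bound from the boundedness of the Bergman projection on a weighted space, which itself rests on a twisted Donnelly--Fefferman estimate. Since on $\{-\rho<\varepsilon\}$ one has $1\le\varepsilon^{r}(-\rho)^{-r}$, it suffices to prove that
$$
M(S,r):=\sup_{w\in S}\int_\Omega\bigl|K_{\Omega,\varphi}(\cdot,w)\bigr|^2(-\rho)^{-r}e^{-\varphi}<\infty .
$$
Replacing $\rho$ by $\max(\rho,-1)$ changes neither the hypotheses nor the sets $\{-\rho<\varepsilon\}$ with $\varepsilon<1$, so I may assume $-1\le\rho<0$; then $-\rho\ge c_0>0$ on $\overline S$, and for a fixed ball $B_w=B(w,d_0)$ with $d_0<\tfrac12 d(\overline S,\partial\Omega)$ we have $-\rho\ge c_0'>0$ on $B_w$ uniformly for $w\in S$. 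Choosing a radial $\zeta_w\in C_c^\infty(B_w)$ with $\int\zeta_w\,d\lambda=1$ and setting $F_w:=\zeta_we^{\varphi}$, the mean value property of holomorphic functions gives $\langle g,F_w\rangle_{L^2(\Omega,\varphi)}=g(w)$ for all $g\in H^2(\Omega,\varphi)$, so $K_{\Omega,\varphi}(\cdot,w)=P_\varphi(F_w)$, where $P_\varphi$ is the Bergman projection of $L^2(\Omega,\varphi)$; moreover $\|F_w\|_{L^2(\Omega,\varphi)}$ is bounded uniformly for $w\in S$.

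Thus the proposition follows once one knows that $P_\varphi$ is bounded on $L^2\bigl(\Omega,(-\rho)^{-r}e^{-\varphi}\bigr)$ with operator norm bounded by a constant $C_r$, for then
$$
\Bigl(\int_\Omega\bigl|K_{\Omega,\varphi}(\cdot,w)\bigr|^2(-\rho)^{-r}e^{-\varphi}\Bigr)^{1/2}=\bigl\|P_\varphi F_w\bigr\|_{L^2((-\rho)^{-r}e^{-\varphi})}\le C_r\,(c_0')^{-r/2}\,\bigl\|F_w\bigr\|_{L^2(\Omega,\varphi)},
$$
and the right-hand side is bounded uniformly for $w\in S$, so $M(S,r)<\infty$ and one takes $C=M(S,r)$ in the statement.

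This weighted mapping property holds in the present setting because $\Psi:=-\log(-\rho)$ is psh and satisfies the Donnelly--Fefferman inequality $i\partial\bar\partial\Psi\ge i\partial\Psi\wedge\bar\partial\Psi$; indeed that inequality is equivalent to plurisubharmonicity of $-e^{-\Psi}=\rho$, which is exactly our hypothesis --- the same device used in the proof of Theorem~\ref{th:Main2}. Writing $(-\rho)^{-r}=e^{r\Psi}$ and using $|\bar\partial\Psi|^2_{i\partial\bar\partial\Psi}\le 1$, one estimates the minimal $L^2(\Omega,\varphi)$-solution $u$ of $\bar\partial u=\bar\partial f$ in the weighted norm, $\|u\|_{L^2((-\rho)^{-r}e^{-\varphi})}\le C_r\|f\|_{L^2((-\rho)^{-r}e^{-\varphi})}$, by the twisted $\bar\partial$-estimate with weight $e^{-\varphi}$ and twist $r\Psi$; this is the Berndtsson--Charpentier type argument underlying \cite{Chen2017} (which rests on the $\bar\partial$-estimates of \cite{B1996}, \cite{Blocki2004}), and since $K_{\Omega,\varphi}(\cdot,w)=F_w-u$ with $u=(I-P_\varphi)F_w$ and $F_w$ compactly supported, the required bound follows. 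The one delicate point --- and, I expect, the only real work beyond routine bookkeeping --- is the low regularity: $\rho$ is merely continuous, $\varphi$ merely psh, and $i\partial\bar\partial\Psi$ may vanish near $B_w$. The degeneracy near $B_w$ is removed by replacing $\Psi$ on a fixed neighbourhood of $B_w$ by $\max(\Psi,\,L+\kappa|z|^2)$ with $\kappa$ small and $L$ large: this still satisfies the Donnelly--Fefferman inequality, since the maximum of functions $\Psi_1,\Psi_2$ with $-e^{-\Psi_i}$ psh has $-e^{-\max(\Psi_1,\Psi_2)}=\max(-e^{-\Psi_1},-e^{-\Psi_2})$ psh, and it agrees with $\Psi$ on $\{-\rho<\varepsilon_0\}$ for $\varepsilon_0$ small; the low regularity is handled by approximating $\rho$ from above by smooth strictly psh functions and $\varphi$ from above by smooth psh functions, running the argument with constants independent of the approximation, and passing to the limit using monotone convergence of the sets $\{-\rho<\varepsilon\}$ together with the continuity of the Bergman kernel under such approximations.
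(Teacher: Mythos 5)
Your argument is essentially the paper's: both reduce the claim, via $1\le\varepsilon^r(-\rho)^{-r}$ on $\{-\rho<\varepsilon\}$, to the boundedness of the Bergman projection $P_\varphi$ on $L^2\bigl(\Omega,(-\rho)^{-r}e^{-\varphi}\bigr)$, obtained from the Berndtsson--Charpentier theorem with the twist $\psi=-r\log(-\rho)$ satisfying $r\,i\partial\bar\partial\psi\ge i\partial\psi\wedge\bar\partial\psi$, and then apply $P_\varphi$ to a compactly supported representative of point evaluation (you use $\zeta_we^{\varphi}$, the paper uses $\chi_SK_{S,\varphi}(\cdot,w)$; either works and is uniformly bounded for $w\in S$). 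Your closing paragraph on degeneracy near $B_w$ and on regularizing $\rho$ and $\varphi$ is unnecessary, since Theorem 2.1 of \cite{BC2000}, as the paper cites it, bounds $\|P_\varphi f\|_{L^2(e^{\psi-\varphi})}$ directly by $\|f\|_{L^2(e^{\psi-\varphi})}$ and requires neither strict positivity of $i\partial\bar\partial\psi$ nor smoothness of the weights.
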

\begin{proof}
	Put $\psi = -r \log (-\rho)$. Then we have 
	\begin{eqnarray*}
			ri\partial\overline{\partial} \psi \geq i\partial \psi \wedge \overline{\partial} \psi.
	\end{eqnarray*} 
By Theorem 2.1 in \cite{BC2000},  we have  
\begin{eqnarray}\label{ineq:1}
	\int_\Omega |P_\varphi (f)|^2 e^{\psi -\varphi}  \leq C\int_\Omega |f|^2 e^{\psi -\varphi} ,
\end{eqnarray}
for every $f \in L^2(\Omega, \varphi - \psi)$,
where $P_\varphi (f)(z):= \int_{\Omega} f(\zeta) K_{\Omega, \varphi} (z, \zeta) e^{-\varphi(\zeta)} $ is the Bergman projection of $f$. Put $f= \chi_S K_{S, \varphi}(\cdot, w)$, $w\in S$, where $\chi_S$ is the characteristic function of $S$. Then 
\begin{eqnarray*}
P_\varphi (f) (z)	&=& \int_{\Omega} \chi_S(\zeta) K_{S, \varphi}(\zeta, w) K_{\Omega, \varphi} (z, \zeta) e^{-\varphi(\zeta)}   \nonumber \\
& = & \int_{S}  K_{S, \varphi} (\zeta, w) K_{\Omega, \varphi}(z, \zeta) e^{-\varphi(\zeta)}   \nonumber \\
& = & K_{\Omega, \varphi}(z,w),
\end{eqnarray*}
for all $w\in S$ and 
\begin{eqnarray*}
\int_{-\rho < \varepsilon }|K_{\Omega, \varphi} (\cdot,w)|^2 e^{ -\varphi} 
&\leq & \varepsilon^{r} \int_{-\rho < \varepsilon }|K_{\Omega, \varphi} (\cdot,w)|^2 |
\rho|^{-r} e^{ -\varphi} \nonumber \\
&\leq & \varepsilon^{r} \int_{\Omega }|K_{\Omega, \varphi}(\cdot,w)|^2 e^{ \psi-\varphi}    \nonumber \\
&\leq & C \varepsilon^{r} \int_{\Omega }|\chi_S K_{S, \varphi}(\cdot, w)|^2 e^{ \psi-\varphi}  \ \ (\text{by \ref{ineq:1}} )\nonumber \\
&= & C_{S,r}\varepsilon^{r} .  
\end{eqnarray*}
	
\end{proof}


\begin{proof}[Proof of Theorem \ref{cor:Main22}]
	By Proposition \ref{prop:estimatedecay} there exists,  for every $\alpha'<\alpha$ and every compact subset $E\subset \Omega$ (with nonempty interior), a constant $C_3>0$ such that 
	$$
	\nu_E (t) \le C_3 t^{\alpha'}.
	$$  
	Note that {\tiny }
	$$
	\sqrt {\nu_E(2 t)} \cdot \log (2t/{\eta(t)})\le C_3^{1/2}(2t)^{\alpha'/2}\left(\log (2t)- \log C_1+C_2/t^\beta\right)\rightarrow 0,
	$$
	provided $\alpha'>2\beta$.  By using Theorem \ref{th:Main2}  we complete the proof.
\end{proof}

 \begin{proof} [Proof of Theorem \ref{th:deltabijin1} ] 
Replacing $\delta_D$ by $c\delta_D$ with $c>0$ sufficiently small, we may always assume that $\delta_D$ (as well as $\delta_{D^t}$) is as small as  wanted. We define the following functions on $D^t$:
$$
\psi_t=-\frac{1}{2}\log {\log {1/\delta _{D^t} }},\quad \varphi_{t}=\alpha \log {1/\delta _{D^t}}  -\frac{1}{2} \log {\log {1/\delta _{D^t} }}.
$$ 
We have 
\begin{align*}
	i\partial\bar{\partial}\left(\varphi_t+\psi_t \right)=& \alpha i\partial \bar{\partial}\log {1/\delta_{D^t} } -i \partial   \bar{\partial} \log \log {1/\delta _{D^t} } \\
= &\left  (\alpha - \frac{1}{ \log {1/\delta_{D^t}} } \right )   i\partial \bar{\partial}\log {1/\delta _{D^t} }  + \frac{i \partial  \log {1/\delta _{D^t} }  \wedge  \bar{\partial}   \log {1/\delta_{D^t} }}{ (\log {1/\delta _{D^t} })^2 }\\
\geq & \frac{i \partial  \log {1/\delta _{D^t}}  \wedge  \bar{\partial}   \log {1/\delta _{D^t} }}{ (\log {1/\delta _{D^t} })^2 }\\
\geq & 2i\partial\psi_t \wedge\bar{\partial}\psi_t 
\end{align*}
in the sense of distribution since $\log_{}{1/\delta_{D^t} }$ is psh on $D^t$ by Oka's lemma. 
Furthermore, 
\begin{align*}
	i\partial\bar{\partial} \varphi_t  =& \left  (\alpha -  \frac 12 \frac{1}{ \log {1/\delta_{D^t}} } \right )   i\partial \bar{\partial}\log {1/\delta _{D^t}}  +   \frac 12 \frac{i \partial  \log {1/\delta _{D^t} }  \wedge  \bar{\partial}   \log {1/\delta _{D^t} }}{ (\log {1/\delta _{D^t} })^2 }\\
\geq & \frac 12    \frac{ i \partial  \log {1/\delta _{D^t} }  \wedge  \bar{\partial}   \log {1/\delta _{D^t}}}{ (\log {1/\delta _{D^t}})^2 }\\
\geq & 0.
\end{align*}
Thus  $\varphi_t $ is psh on $D^t$.

Let $0\le \chi \le1$ be a $\mathit{C}^{\infty}$ function on $\mathbb{R}$ such that $\chi =0$ on $\left(0,+\infty\right)$ and $\chi=1$ on $\left(-\infty,-\log_{}{2}\right)$. For each $f\in  H ^{2}   \left (D,   - \alpha \log \delta_D  \right) $ and every $\varepsilon >0$, we define 
$$
v _{t,\varepsilon }: = f\bar{\partial}\chi \left ( \log {\log {1/\delta _{D^t} } }-\log {\log {1/\varepsilon } } \right ).
$$
We shall use a Donnelly-Fefferman type $L^2$- estimate for the $\bar{\partial}$- equation, following an idea of Berndtsson-Charpentier \cite{BC2000}.   Note first that $v _{t,\varepsilon }$ is well-defined on $D^t$ for sufficiently small $t$ (e.g.,$\Lambda_{t} : = \sup _{z \in \partial D }\delta_{D^t} (z) <\varepsilon $). Take a bounded pseudoconvex domain $ \widetilde{D^t }  $ such that $\overline{D}\subset  \widetilde{D^t }   \subset \subset D^t$. Since $\varphi _t$ is a bounded (psh) function on $\widetilde{D^t }   $, $ v_{t,\varepsilon }\in L_{\left ( 0,1 \right ) }^{2}\left ( \widetilde{D^t },   \varphi _t \right)$ (Note that $\delta_{D^t}$ is a Lipschitz  function). Thus by H\"omander's theorem, there is a minimal solution $u_{t,\varepsilon }$ of the equation $\bar{\partial }u=v_{t,\varepsilon }$ in $L^{2}\left(\widetilde{D^t },   \varphi _{t}  \right)$.  Since $u_{t,\varepsilon }e^{\psi _t}\perp$ Ker $\bar{\partial}$ in $L^{2} \left( \widetilde{D^t } ,   \varphi _t+ \psi_t \right)$,  we have
\begin{align*}
	\int_{\widetilde{D^t } } \left | u_{t,\varepsilon }\right |^2e^{\psi _{t}-\varphi _{t}  } \le &\int_{\widetilde{D^t }} \left | \bar{\partial }\left (u_{t,\varepsilon }e^{\psi _{t} }\right )\right |_{i\partial \bar{\partial}\left(\varphi_{t}+\psi _{t} \right )}^{2}e^{-\varphi _{t}-\psi _{t}  }     \\
	=&\int_{\widetilde{D^t } } \left | v_{t,\varepsilon }+\bar{\partial}\psi _{t}\wedge u_{t,\varepsilon } \right |_{i\partial \bar{\partial}\left(\varphi_{t}+\psi _{t} \right )}^{2}e^{\psi _{t}-\varphi _t  } \\
	\le & \left ( 1+\frac{1}{r}\right ) \int_{\widetilde{D^t }} \left | v_{t,\varepsilon } \right |_{i\partial \bar{\partial}\left(\varphi_{t}+\psi _{t} \right )}^{2}e^{\psi _{t}-\varphi _{t}  }\\
	&+\left ( 1+r \right )\int_{\widetilde{D^t }} \left | \bar{\partial}\psi _{t}\right |_{i\partial \bar{\partial}\left(\varphi_{t}+\psi _{t} \right )}^{2}\left | u_{t,\varepsilon }  \right |^2 e^{\psi _{t}-\varphi _{t} }, 
\end{align*}
where $r>0$. We may choose $r$ sufficiently small such that the second term in the last inequality may be absorbed by the left side and we obtain 
\begin{align*}
	\int_{\widetilde{D^t } }\left | u_{t,\varepsilon } \right |^{2}e^{\psi _{t}-\varphi _{t}  } \le& C_{\alpha  }\int_{\widetilde{D^t } }\left | v_{t,\varepsilon } \right |_{i\partial \bar{\partial}  \left ( \varphi _{t}+\psi _{t} \right ) }^{2} e^{\psi _{t}-\varphi _{t}} \\
	\le & C_{\alpha   }\int_{\left \{ \varepsilon \le \delta _{t}\le \sqrt{\varepsilon } \right \}   }^{}\left | f\right  |^{2} \delta_{D^t  }^{\alpha }   \\
	\le & C_{\alpha   }\int_{\left \{ \frac12\varepsilon \le \delta \le \sqrt{\varepsilon } \right \}   }^{}\left | f\right  |^{2} \delta_{D^t }^{\alpha }\\
	\le & C_{\alpha  }\int_{\left \{ \frac 12\varepsilon \le \delta \le \sqrt{\varepsilon } \right \}   }^{}\left | f\right  |^{2} \delta_D^{\alpha }\\
\end{align*}
for all $t \leq t_{\varepsilon } \ll 1$, since $\delta_D \le \delta _{D^t} \le \delta_D +\Lambda _{t}$ on $\overline{D}$. Here $C_{\alpha  }>0$ is a generic constant depending only on $\alpha$. Put
$$
f_{t,\varepsilon}:=  f\chi \left ( \log_{}{\log_{}{1/\delta_{D^t}} }-\log_{}{\log_{}{1/\varepsilon } }  \right )-u_{t, \varepsilon }.
$$
Then $f_{t,\varepsilon}\in \mathcal{O} \left(\widetilde{D^t } \right)$ and 
\begin{align*}
	  \int_{D}^{}\left |f_{t, \varepsilon}- f\right  |^{2} \delta_D^{\alpha }   & \le    2\int_{\left \{ \varepsilon \le \delta \le \sqrt{\varepsilon } \right \}   }^{}\left | f\right  |^{2} \delta_D^{\alpha } +2\int_{\Omega}^{}\left |u_{t,\varepsilon}\right |^{2} \delta_{D^t}^{\alpha }  \\
	& \le   C_{\alpha  }\int_{\left \{ \frac 12\varepsilon \le \delta \le \sqrt{\varepsilon } \right \}   }^{}\left | f\right  |^{2} \delta_D^{\alpha } ,
\end{align*}
provided $t \leq t_{\varepsilon } \ll 1$. Thus we obtain the desired approximation of $f$.  
\end{proof}

 \begin{remark}
 Theorem \ref{th:deltabijin1} still holds if we suppose there is a sequence of pseudoconvex open sets $\{ D_j \} _j$ with $\overline{D} \subset D_j$ for all $j$ and $\sup _{ z\in \partial D} d(z, \partial D_j )  \rightarrow 0$.
 
 \end{remark}

\begin{proof} [  Proof \,of\,  Corollary \ref{co:proofofco1}]
 Since $\overline{D}$ is polynomially convex, it is well-known that there is,   for each neighbourhood $U$ of $D$,  a continuous psh exhaustion function $\rho$ in $\mathbb{C}^n$ so that
$$
\overline{D}\subset D_{0}:= \left \{ z\in \mathbb{C}^n: \rho \left ( z \right )<0  \right \}\subset U    
$$
( see \cite{Hormander1965} ). Since $D _{0}$ is  psedoconvex and $ \overline{ D}^o =D$, we may construct a sequence of pseudoconvex domains $D_j$ satisfying $\overline{D} \subset D_j$ for all $j$ and $\sup _{ z\in \partial D} d(z, \partial D_j )  \rightarrow 0$. Thus for every $\varepsilon>0$ and $f \in   H ^{2}   \left (D,   - \alpha \log \delta_D  \right)  $,  there is a  function $g$ which is holomorphic in a neighborhood of $D$ such that $\left \| g-f \right \|_{  L ^{2}   \left (D,   - \alpha \log \delta_D  \right)  }  < \varepsilon/2$.  On the other hand,  $g$ can be uniformly approximated by polynomials on the compact set $\overline{D}$ in view of Oka-Weil theorem.  Thus,  there is a polynomial $p\left(z\right)$ such that $\left \| p-g \right \|_{ L ^{2}   \left (D,   - \alpha \log \delta_D  \right) }  < \varepsilon/2$,  which implies that $\left \| p-f \right \|_{ L ^{2}   \left (D,   - \alpha \log \delta_D  \right)  }  < \varepsilon$. 
\end{proof}

\section{Proofs of Theorem \ref{th:Bergmancomplete1} and  Theorem \ref{th:approxidelta1}}

In what follows,  we assume that $D$ be a domain in $\C^n$ and $\varphi$  a continuous psh function on $D$.  Consider the Hartogs domain $\Omega$ in $\C^{n+1}$ as follows
$$
\Omega  = \{(z,w) \in D\times \C:  |w|<e^{-\varphi(z)} \}.
$$
The proof of theorem \ref{th:Bergmancomplete1} is based on several lemmas given below.
\begin{lemma}[Ligocka, \cite{Ligocka1989}] \label{prop:ligo}  
  $(1)$ Let $f\in H^2(\Omega, 0)$.  Then 
$ 
f(z,w) = \sum_{j=0} ^ \infty f_j(z) w^j , 
 $
where for every $j$,  $f_j \in H^2(D, 2(j+1) \varphi)$.
Moreover,
$$
\| f(z,w) \|^2 _{ L^2(\Omega, 0)} =  \sum_{j=0} ^ \infty \frac{ \pi} { j+1}  \|  f_j \|^2_ {L^2 (D, 2(j+1) \varphi)  }.
$$
 $(2)$  For  $z,t\in D,   \ \ w, s \in \mathbb{C} , \ \ |w|<e^{-\varphi(z)}, |s|<e^{-\varphi(t) }$,  we have 
\begin{eqnarray*}
K_{\Omega}( (z,w), (t,s)) = \sum\limits_{j=0}^{\infty}  \frac{j+1 }{\pi}  K_{D,  2(j+1)\varphi }(z,t) \langle w, s\rangle^j, 
\end{eqnarray*}
 where $ K_{D,  2(j+1) \varphi  }(z,t)$ is the weighted Bergman kernel of $H^2(D, 2(j+1) \varphi )$ and $K_{\Omega}((z,w), (t,s)) $ is the classical Bergman kernel of $H^2(\Omega, 0)$.
 
\end{lemma}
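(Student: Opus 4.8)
The approach I would take is the classical Hartogs--Taylor expansion in the fibre variable $w$, reading both assertions off the orthogonality of the monomials $\{w^j\}_{j\ge0}$ on circles together with the elementary identity $\int_{|w|<R}|w|^{2j}\,d\lambda(w)=\pi R^{2j+2}/(j+1)$.

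For part $(1)$: fix $f\in H^2(\Omega,0)$. For each $z\in D$ the slice $w\mapsto f(z,w)$ is holomorphic on the disc $\{|w|<e^{-\varphi(z)}\}$, so it has a Taylor expansion $f(z,w)=\sum_{j\ge0}f_j(z)w^j$ with $f_j(z)=\frac1{2\pi i}\int_{|\zeta|=r}f(z,\zeta)\zeta^{-j-1}\,d\zeta$ for any $0<r<e^{-\varphi(z)}$; differentiating under the integral (the continuity of $\varphi$ lets one keep $r$ locally constant in $z$) shows $f_j\in\mathcal O(D)$. I would then compute the norm by Fubini: on the circle $|w|=r$, Parseval gives $\int_0^{2\pi}|f(z,re^{i\theta})|^2\,d\theta=2\pi\sum_j|f_j(z)|^2r^{2j}$, and integrating $r\,dr$ over $(0,e^{-\varphi(z)})$, then applying monotone convergence, yields $\int_{|w|<e^{-\varphi(z)}}|f(z,w)|^2\,d\lambda(w)=\sum_j\frac{\pi}{j+1}|f_j(z)|^2e^{-2(j+1)\varphi(z)}$; integrating over $z$ and using monotone convergence once more gives $\|f\|_{L^2(\Omega,0)}^2=\sum_j\frac{\pi}{j+1}\|f_j\|_{L^2(D,2(j+1)\varphi)}^2$, so every summand is finite, i.e.\ $f_j\in H^2(D,2(j+1)\varphi)$.

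For part $(2)$: I would promote this computation to an explicit orthonormal basis of $H^2(\Omega,0)$. For each $j$ choose an orthonormal basis $\{\phi_{j,k}\}_k$ of $H^2(D,2(j+1)\varphi)$ and set $\psi_{j,k}(z,w):=\sqrt{\tfrac{j+1}{\pi}}\,\phi_{j,k}(z)\,w^j$. The same angular orthogonality computation as in $(1)$ gives $\langle\psi_{j,k},\psi_{j',k'}\rangle_{L^2(\Omega,0)}=\delta_{jj'}\delta_{kk'}$, and part $(1)$ shows that any $f\in H^2(\Omega,0)$ orthogonal to all the $\psi_{j,k}$ has all its Taylor coefficients $f_j$ orthogonal to $\{\phi_{j,k}\}_k$, hence $f_j\equiv0$ for every $j$ and $f\equiv0$. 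Thus $\{\psi_{j,k}\}_{j,k}$ is a complete orthonormal system, and expanding the Bergman kernel along it,
$$K_\Omega((z,w),(t,s))=\sum_{j,k}\psi_{j,k}(z,w)\overline{\psi_{j,k}(t,s)}=\sum_{j\ge0}\frac{j+1}{\pi}\Bigl(\sum_k\phi_{j,k}(z)\overline{\phi_{j,k}(t)}\Bigr)(w\bar s)^j=\sum_{j\ge0}\frac{j+1}{\pi}K_{D,2(j+1)\varphi}(z,t)(w\bar s)^j,$$
which is the claimed formula (with $\langle w,s\rangle=w\bar s$ in $\mathbb C$).

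The bookkeeping that requires genuine care is the string of interchanges of summation and integration in $(1)$ (Parseval in $\theta$, monotone convergence in $r$, monotone convergence in $z$), all of which are harmless for nonnegative integrands. The one point I regard as the main obstacle is the convergence of the kernel series in $(2)$: absolute convergence at a fixed point follows at once from $\sum_{j,k}|\psi_{j,k}(z,w)|^2=K_\Omega((z,w),(z,w))<\infty$ and Cauchy--Schwarz, but for locally uniform convergence on $\Omega\times\Omega$ I would bound $K_{D,2(j+1)\varphi}(z,z)$ by a sub-mean-value inequality over a ball $B(z,r_0)$ with $r_0$ chosen so small that the oscillation of $\varphi$ on $B(z,r_0)$ is tiny, and combine this with the bound $|w|\le\theta e^{-\varphi(z)}$, $\theta<1$, which holds on compact subsets of $\Omega$; the exponential factors in $\varphi(z)$ and $\varphi(t)$ then cancel and one is left with a geometrically decaying factor in $j$ that dominates the series term by term.
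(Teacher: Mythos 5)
Your proof is correct. The paper gives no argument for this lemma --- it is quoted directly from Ligocka's work with a citation --- so there is nothing internal to compare against; your derivation (Taylor expansion in the fibre variable, Parseval on circles plus the identity $\int_{|w|<R}|w|^{2j}\,d\lambda=\pi R^{2j+2}/(j+1)$ for part $(1)$, and the orthonormal system $\sqrt{(j+1)/\pi}\,\phi_{j,k}(z)w^j$ for part $(2)$) is exactly the standard Forelli--Rudin/Ligocka argument, and your attention to the monotone-convergence interchanges and to the locally uniform convergence of the kernel series covers the only points needing care.
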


\begin{lemma} \label{lm:exhaustion}
Let $D$ be a bounded pseudoconvex domain in $\C^n$ and  $\varphi >0$  a continuous psh function on $D$ satisfying
\begin{eqnarray*}\label{eq:assumption1}
\lim _{z\rightarrow \partial D} \varphi(z)  = +\infty.
\end{eqnarray*}
Then the Hartogs domain $\Omega$  is Bergman exhaustive.
\end{lemma}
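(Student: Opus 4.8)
The plan is to reduce everything, via Ligocka's expansion, to a lower bound for the weighted Bergman kernels of $D$. First recall that ``$\Omega$ is Bergman exhaustive'' means $K_\Omega((z,w),(z,w))\to\infty$ as $(z,w)\to\partial\Omega$; this makes sense because $\varphi>0$ forces $|w|<e^{-\varphi(z)}<1$, so $\Omega$ is bounded and $K_\Omega((z,w),(z,w))$ is finite and positive in the interior. By Lemma \ref{prop:ligo}(2),
\[
K_\Omega((z,w),(z,w))=\sum_{j=0}^\infty\frac{j+1}{\pi}\,K_{D,2(j+1)\varphi}(z,z)\,|w|^{2j},
\]
so it suffices to bound the $K_{D,m\varphi}(z,z)$ from below appropriately.

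The key estimate I would establish is
\[
K_{D,m\varphi}(z,z)\ \ge\ c_D\,e^{m\varphi(z)}\qquad\text{for all } z\in D,\ m\in\mathbb N,
\]
with a constant $c_D>0$ depending only on $n$ and $\operatorname{diam}(D)$, in particular independent of $m$ and of $z$. This comes from the Ohsawa--Takegoshi extension theorem applied to the single point $z$: a point has codimension $n$, so no curvature hypothesis on the weight is needed and the $L^2$ constant depends only on the diameter of $D$. Explicitly, for each $z$ one gets $f_z\in\mathcal O(D)$ with $f_z(z)=1$ and $\int_D|f_z|^2e^{-m\varphi}\le C_n\operatorname{diam}(D)^{2n}e^{-m\varphi(z)}$, whence $K_{D,m\varphi}(z,z)\ge|f_z(z)|^2/\|f_z\|_{L^2(D,m\varphi)}^2\ge c_D\,e^{m\varphi(z)}$.

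Feeding this into Ligocka's formula and summing the geometric series (valid since $e^{2\varphi(z)}|w|^2<1$ on $\Omega$) yields
\[
K_\Omega((z,w),(z,w))\ \ge\ \frac{c_D}{\pi}\,e^{2\varphi(z)}\sum_{j=0}^\infty(j+1)\bigl(e^{2\varphi(z)}|w|^2\bigr)^j\ =\ \frac{c_D\,e^{2\varphi(z)}}{\pi\bigl(1-e^{2\varphi(z)}|w|^2\bigr)^2}.
\]
Then I would take $(z_k,w_k)\to(z_0,w_0)\in\partial\Omega$ and split into two cases. If $z_0\in\partial D$ the hypothesis gives $\varphi(z_k)\to+\infty$, while $1-e^{2\varphi(z_k)}|w_k|^2\in(0,1]$, so the right-hand side is $\ge(c_D/\pi)\,e^{2\varphi(z_k)}\to\infty$. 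If $z_0\in D$, then $(z_0,w_0)\in\partial\Omega$ together with the continuity of $\varphi$ forces $|w_0|=e^{-\varphi(z_0)}$, hence $e^{2\varphi(z_k)}|w_k|^2\to1^{-}$ while $e^{2\varphi(z_k)}\to e^{2\varphi(z_0)}>0$, so the right-hand side again tends to $+\infty$. In both cases $K_\Omega((z_k,w_k),(z_k,w_k))\to\infty$, completing the proof.

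The one substantive step is the lower bound $K_{D,m\varphi}(z,z)\gtrsim e^{m\varphi(z)}$, and the delicate feature is that the exponent must be exactly $m\varphi(z)$ with a constant uniform in $m$. A softer comparison argument --- restricting to a small ball $B(z,r)\subset\subset D$ on which $\varphi$ oscillates by at most $\varepsilon$ --- only gives $K_{D,m\varphi}(z,z)\gtrsim e^{m(\varphi(z)-\varepsilon)}$, and that $\varepsilon$-loss is fatal: when $z_0$ is an interior point of $D$ the geometric series above would then converge to a finite limit and the argument would collapse. Obtaining the sharp exponent with a weight-independent constant is precisely what Ohsawa--Takegoshi extension from a point provides, so that is the tool I would use; the rest is bookkeeping.
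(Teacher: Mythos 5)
Your proof is correct, and it takes a genuinely different route from the paper for part of the boundary. The paper splits $\partial\Omega$ into two pieces: on $\partial\Omega\cap\{w\neq 0\}$ (i.e.\ over interior points of $D$) it observes that $\rho(z,w)=\log|w|+\varphi(z)$ makes $\Omega$ locally hyperconvex there, and then invokes Ohsawa's theorem that hyperconvex domains are Bergman exhaustive together with the localization principle for the Bergman kernel; only for the remaining piece $z\to\partial D$ does it use Ligocka's formula, and there it keeps just the $j=0$ term, $K_\Omega\ge \frac1\pi K_{D,2\varphi}(z)\ge Ce^{2\varphi(z)}$, proved exactly as you do by Ohsawa--Takegoshi extension from a point. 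You instead sum the \emph{entire} Ligocka series using the lower bound $K_{D,2(j+1)\varphi}(z,z)\ge c_D e^{2(j+1)\varphi(z)}$ with $c_D$ uniform in $j$, which yields the closed form $c_D e^{2\varphi(z)}/\bigl(\pi(1-e^{2\varphi(z)}|w|^2)^2\bigr)$ and handles both boundary strata at once: blow-up of $e^{2\varphi}$ over $\partial D$, blow-up of the denominator over interior points of $D$. What your approach buys is self-containedness --- no appeal to hyperconvexity, Ohsawa's exhaustiveness theorem, or localization --- at the price of making essential use of the fact that the Ohsawa--Takegoshi constant for extension from a point depends only on $n$ and $\operatorname{diam}(D)$ and not on the psh weight; that uniformity is indeed standard (and is implicitly the same input the paper uses for its single application with $m=2$), and your closing remark correctly identifies it as the step that cannot be replaced by a soft comparison on a small ball. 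Both arguments rely on the same two tools (Ligocka's expansion and point extension); yours simply exploits them more fully.
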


\begin{proof}
Let $\rho(z,w)  = \log |w| + \varphi(z)$.  Then it is psh on $\Omega$ and $\rho(z,w) = 0$ on $\partial \Omega \cap \{ w \neq 0\}  $, which means that $\Omega$ is locally hyperconvex at $\partial \Omega \cap \{ w \neq 0\}  $.
Since every hyperconvex domain is Bergman exhaustive (see \cite{Ohsawa1993}),   it follows from the well-known localization principle (see \cite{Ohsawa1981-1} or \cite{D-F-H1982} ) of the Bergman kernel that  $K_\Omega \rightarrow + \infty$ at  $\partial \Omega \cap \{ w \neq 0\}$.
On the other hand, by $(2)$ of the Ligocka formula in Lemma \ref{prop:ligo}, we know that
\begin{eqnarray}
  K_\Omega ((z,w), (z,w)) \geq   \frac{1 }{\pi} K_{D,  2 \varphi } (z) .
\end{eqnarray}
We claim that 
$$
 \frac{1 }{\pi} K_{D,  2 \varphi } (z)  \geq C e^{2\varphi(z)}  ,
$$
for some positive number $C$.
Take $f\in H^2 (D, 2\varphi)$.  Applying the Ohsawa-Takegoshi extension theorem for a single point $z$ in a similar way as Demailly \cite{Demailly1992}, we obtain a holomorphic function $F$ on $D$ with the following properties:   $F(z) = f(z)$, and 
$$
\int _D |F|^2e^{-2 \varphi}   \leq C |f(z)|^2e^{-2 \varphi(z)} .
$$
Hence 
$$
e^{2 \varphi(z) } \leq  C \frac{ |f(z)|^2 } { \int _D |F|^2e^{-2 \varphi}   }  = C \frac{ |F(z)|^2 } { \int _D |F|^2e^{-2 \varphi}   } \leq C K_{D,  2 \varphi } (z).
$$
Thus when $(z, w) \rightarrow \partial \Omega$, we have $z\rightarrow \partial D$ and  
$$  K_\Omega ((z,w), (z,w))  \geq C e^{2 \varphi(z) } \rightarrow  +  \infty.$$ This completes the proof.
\end{proof}

\begin{lemma} \label{lm:approxi}
Let $D\subset \subset \C$ be a domain whose boundary has no isolated points.   Let $\varphi(z) = - \alpha \log \delta (z)$ for $\alpha >0$.
Then for every $z_0\in \partial D$,  $H^2(\Omega,0) \cap \mathcal{O}_{(z_0,0)}$ is dense in $H^2(\Omega,0)$, where $ \mathcal{O}_{(z_0,0)}$  denotes   the holomorphic functions in a neighborhood of $(z_0,0)$ in $\C^{2 }$.
\end{lemma}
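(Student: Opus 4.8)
The plan is to strip off the $w$-variable with Ligocka's formula and reduce to a one-variable weighted approximation on $D$, in which the non-isolatedness of $z_0$ is precisely the hypothesis that gets used.

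\emph{Reduction to one variable.} Given $f\in H^2(\Omega,0)$ and $\varepsilon>0$, expand $f(z,w)=\sum_{j\ge0}f_j(z)w^j$ as in Lemma \ref{prop:ligo}, so $f_j\in H^2(D,2(j+1)\varphi)=H^2\big(D,-2(j+1)\alpha\log\delta_D\big)$ and $\|f\|_{L^2(\Omega,0)}^2=\sum_j\frac{\pi}{j+1}\|f_j\|_{L^2(D,2(j+1)\varphi)}^2$; truncating the series reduces us to $f=\sum_{j=0}^N f_j(z)w^j$. Hence it suffices to prove the one-variable claim: \emph{for every $\beta>0$, $h\in H^2(D,-\beta\log\delta_D)$ and $\varepsilon>0$, there are a neighbourhood $V$ of $z_0$ in $\C$ and $g\in\mathcal O(D\cup V)$ with $\|h-g\|_{L^2(D,-\beta\log\delta_D)}<\varepsilon$.} Applying it to the finitely many weights $\beta_j=2(j+1)\alpha$ with one common $V$ and setting $g(z,w)=\sum_{j=0}^N g_j(z)w^j$, this $g$ is holomorphic on $(D\cup V)\times\C$, a neighbourhood of $(z_0,0)$ in $\C^2$, lies in $H^2(\Omega,0)$ by Ligocka's norm identity for a $w$-polynomial, and approximates $f$.

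\emph{The one-variable claim.} Use that $z_0$ is not isolated: choose $z_k\in\C\setminus D$ with $z_k\ne z_0$ and $|z_k-z_0|\downarrow0$, put $r_k:=\tfrac12|z_k-z_0|$ and $D_k:=D\cup B(z_0,r_k)$ — a planar, hence pseudoconvex, domain containing $D$ and the neighbourhood $B(z_0,r_k)$ of $z_0$. Since $z_k\in(\C\setminus D)\setminus B(z_0,r_k)=\C\setminus D_k$, one has for every $z\in D$
\[
\delta_D(z)\ \le\ \delta_{D_k}(z)\ \le\ \delta_D(z)+3r_k:
\]
if the point of $\C\setminus D$ nearest to $z$ lies outside $B(z_0,r_k)$, then $\delta_{D_k}(z)=\delta_D(z)$; otherwise $\delta_{D_k}(z)\le|z-z_k|\le|z-z_0|+2r_k\le\delta_D(z)+3r_k$. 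In particular $\delta_{D_k}\to\delta_D$ uniformly on $D$. Now repeat the Donnelly--Fefferman argument from the proof of Theorem \ref{th:deltabijin1}, with $D^t$ replaced by $D_k$: take $\varphi_k=\beta\log(1/\delta_{D_k})-\tfrac12\log\log(1/\delta_{D_k})$ and $\psi_k=-\tfrac12\log\log(1/\delta_{D_k})$ (plurisubharmonic, satisfying $i\partial\bar\partial(\varphi_k+\psi_k)\ge 2\,i\partial\psi_k\wedge\bar\partial\psi_k$ exactly as there), and a cut-off $\chi$ built from $\log\log(1/\delta_{D_k})$ which is $0$ on $\{\delta_{D_k}\le\epsilon_k\}$ and $1$ on $\{\delta_{D_k}\ge\sqrt{\epsilon_k}\}$, where $\epsilon_k\to0$ and $r_k/\epsilon_k\to0$. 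For $k$ large, $\{\delta_{D_k}\le\epsilon_k\}\supset B(z_0,r_k)\supset D_k\setminus D$, so $h\chi$ is a smooth function on $D_k$ that vanishes near $z_0$, while $h\,\bar\partial\chi$ is supported in the collar $\{\epsilon_k\le\delta_{D_k}\le\sqrt{\epsilon_k}\}\subset\subset D_k$, on which $\delta_{D_k}\le\delta_D+3r_k\le2\delta_D$. Letting $u_k$ be the minimal $L^2(D_k,\varphi_k)$-solution of $\bar\partial u_k=h\,\bar\partial\chi$ and $g:=h\chi-u_k\in\mathcal O(D_k)$, the estimate of Theorem \ref{th:deltabijin1} then yields
\[
\|g-h\|_{L^2(D,-\beta\log\delta_D)}^2\ \lesssim\ \int_{\{\tfrac12\epsilon_k\le\delta_D\le\sqrt{\epsilon_k}\}}|h|^2\delta_D^\beta\ \longrightarrow\ 0\qquad(k\to\infty),
\]
since $\int_D|h|^2\delta_D^\beta<\infty$ and these collars shrink to $\partial D$. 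Taking $k$ large and $V=B(z_0,r_k)$ proves the claim.

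\emph{Main obstacle.} Everything rests on the inequality $\delta_{D_k}(z)\le\delta_D(z)+3r_k$, which alone makes the weights $\delta_{D_k}^\beta$ and $\delta_D^\beta$ comparable on the transition collar; this is where non-isolatedness cannot be dispensed with. If $z_0$ were isolated, $\C\setminus D$ would coincide with $\{z_0\}$ near $z_0$, so $D_k=D\cup B(z_0,r_k)$ would \emph{fill} a whole punctured disc about $z_0$ and $\delta_{D_k}$ would be bounded \emph{below} near $z_0$ by the distance to the remaining boundary; the collar $\{\epsilon_k\le\delta_{D_k}\le\sqrt{\epsilon_k}\}$ would then swallow a fixed neighbourhood of $z_0$ on which $\delta_{D_k}\not\asymp\delta_D$, the $\bar\partial$-error would fail to vanish, and the approximation would genuinely break down — as it must, since $\Omega_\alpha$ over a punctured disc is not Bergman complete (Theorem \ref{th:Bergmancomplete1}). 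The remaining points — choosing one $D_k$ that serves all $\beta_j$ at once, and checking $g=\sum_j g_j(z)w^j\in H^2(\Omega,0)$ via the Ligocka identity — are routine.
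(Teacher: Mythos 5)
Your proof is correct and follows essentially the same route as the paper's: truncate the Ligocka expansion, enlarge $D$ to $D_k=D\cup \Delta(z_0,r_k)$ using the exterior points $z_k\to z_0$ to obtain the distance comparison $\delta_D\le\delta_{D_k}\le\delta_D+3r_k$, and correct $h\chi$ by a minimal Donnelly--Fefferman $\bar\partial$-solution. The only (immaterial) difference is that your cut-off transitions in a collar $\{\epsilon_k\le\delta_{D_k}\le\sqrt{\epsilon_k}\}$ along all of $\partial D$, as in the proof of Theorem~\ref{th:deltabijin1}, whereas the paper localizes it to the annulus $2r_k\le|z-z_0|\le\sqrt{2r_k}$ about $z_0$, so that only the behaviour of $f_j$ near $z_0$ enters the error term.
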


\begin{proof} 
Let $f\in H^2(\Omega,0)$. Then for each $\varepsilon >0$, there exists $j_0 = j_0 (f,\varepsilon) >0$ such that 
$$
\left  \| f- \sum_{j=0} ^ {j_0} f_j(z) w^j  \right  \| _ {L^2(\Omega,0)} < \varepsilon.
$$
Next, we use two steps to prove this theorem.\\
\emph{Step 1.}  For each $f_j \in H^2(D, 2(j+1) \varphi) $ and $\varepsilon >0$ , we try to find $F_j \in  H^2(D, 2(j+1) \varphi)$ which is holomorphic in a neighbourhood of $z_0$ in $\C$ such that 
$$
\| F_j - f_j\| _{   L^2 ( D, 2(j+1) \varphi) } < \varepsilon.
$$
Fix $z_0 \in \partial D$.  Since $z_0$ is non-isolated,  there exists a sequence  $\{z_k \}_k \subset D^c$,  $z_k \neq z_0$, $z_k \rightarrow z_0$. Put $D_k := D \cup \Delta (z_0, r_k)$,  where $r_k : = |z_k -z_0 |$.  Put $\delta_k : = \delta_{D_k} $, $ \delta : =\delta_D $ and $\Delta_{r_k} (z_0)  : =  \Delta (z_0, r_k) $.  We claim that 
\begin{eqnarray*}
\delta_k \leq 3 \delta    \ \ \ \  \text{ on }  \ \ \ \  D\cap ( \Delta _{\sqrt{2}r_k}\setminus\Delta_{2r_k}  ).
\end{eqnarray*} 
Actually,  we may assume that $r_k \searrow 0 $,  so $z_k \in \partial \Delta_{r_k} (z_0)$.   Let $z\in D \cap (  \Delta _{\sqrt{2}r_k} \setminus \Delta_{2r_k} ) $.\\
\emph{Case 1.} If $\delta(z)  < |z-z_0 | -r_k $,  then we may find $\zeta_0 \in \partial D$ satisfying 
 $\delta (z) = |z-\zeta_0| < r_k$ and 
 $$
 |z- \zeta_0| \leq |z-\zeta| ,  \ \ \ \  \text{ for every } \   \zeta \in \partial D.
 $$
On the other hand,  if $\zeta \in \partial \Delta_k$, then 
$$
|z-\zeta| \geq |z-z_0| - |z_0 - \zeta| = |z-z_0| - r_k > \delta(z) = |z-\zeta_0|.
$$
Hence  $|z-\eta | \geq |z-\zeta_0| $ for every  $\eta \in \partial D_k \subseteq \partial D \cup \partial \Delta_{r_k} $. Thus,  $\delta_k(z)  =|z-\zeta_0| = \delta(z)$.\\
\emph{Case 2.}  If $\delta(z) \geq |z-z_0| -r_k \geq r_k$,
then 
\begin{eqnarray*}
\delta_k (z) & \leq & |z-z_k| \\
& \leq & |z-z_0|  +    |z_0-z_k|  \\
& = & |z-z_0 |  + r_k\\ 
& = & |z-z_0 |  -r_k + 2r_k\\ 
& \leq &3( |z-z_0 |  - r_k) \\ 
& \leq  & 3 \delta(z). \\ 
\end{eqnarray*}
Now we fix $0 \leq j \leq j_0$. Put $\psi_k (z) = - 2\alpha  (j+1) \log  \delta_k(z)  \in C(D_k)  \cap SH(D_k)$. Choose $\chi : \R  \rightarrow [0,1]$ such that $\chi \mid _{ (-\infty, -\log 2] }= 1$,  $\chi \mid_{\R^+} = 0 .$ Set
 $$
 \eta_{j,k} : = f_j \chi ( \log ( -\log |z-z_0|) - \log ( -\log (2r_k)) ).
$$
Applying the Donnelly-Fefferman estimate to $\psi_k$, we find a solution $u_{j,k}$ of $ \overline {\partial } u = \overline{  \partial  }  \eta_{j,k}  =  f_j \overline{  \partial  }  \chi $ on $D_k \setminus \{ z_0\} $ satisfying
\begin{eqnarray*}
\int_ {D_k  \setminus \{ z_0\}} |u_k|^2 e^{-\psi_k }   & \leq &  C \int _{D_k} |f_j|^2  | \overline{\partial}  \chi(\cdot) |^2 _{ i \partial \overline{ \partial } ( - \log ( -\log |z-z_0|)) } e^{ -\psi_k} \\
& \leq & C \int _{ D \cap ( \Delta _{\sqrt{2}r_k}  \setminus  \Delta_{2r_k}  ) } |f_j| ^2 \delta _k^{2\alpha (j+1) } \\
& \leq & C  \int _{ D \cap ( \Delta _{\sqrt{2}r_k}  \setminus  \Delta_{2r_k}  ) } |f_j| ^2 \delta  ^{2\alpha (j+1) }  \rightarrow 0 \ \ ( k \rightarrow \infty).
\end{eqnarray*}
Put $$F_{j,k} = f_j \chi ( \log ( -\log |z-z_0|) - \log ( -\log (2r_k)) ) - u_{j,k}.$$
Then $F_{j,k}  \in H^2 (D, 2(j+1) \varphi) \cap \mathcal{O} (D_k)$ and 

\begin{eqnarray*}
 && \int_ D |F_{j,k} - f_j|^2 \delta  ^{2\alpha (j+1) }    \\
  &   \leq &   2 \int _{ D \cap  \Delta (z_0, \sqrt {2} r_k) } |f_j| ^2 \delta  ^{2\alpha (j+1) }  + 2 \int _{ D }   |u_{j,k}| ^2 \delta  ^{2\alpha (j+1) }  \\
& \leq & 2  \int _{ D \cap  \Delta (z_0, \sqrt {2}r_k ) } |f_j| ^2 \delta  ^{2\alpha (j+1) }   + 2 \int _{ D_k }   |u_{j,k}| ^2 \delta_k  ^{2\alpha (j+1) } \\
& \leq &  C  \int _{ D \cap  \Delta (z_0, \sqrt {2}r_k ) } |f_j| ^2 \delta  ^{2\alpha (j+1) }    \rightarrow 0  \ \ \  ( k \rightarrow \infty).
\end{eqnarray*}

\emph{Step 2.} Let $\widetilde{F} (z,w) : =  \sum ^{j_0} _ {j=0}  F_{j,k}  (z) w^j \in \mathcal {O} ( (D\cup \Delta(z_0,r_k))  \times  \C) $,  $k \gg1$, where $F_{j,k}$ is as in \emph{Step 1.}
Thus 
\begin{eqnarray*}
&& \left \| \widetilde{F} (z,w) -  \sum ^{j_0} _ {j=0}  f_j (z) w^j  \right \|^2 _ {L^2(\Omega,0) } \\
& = &  \left \|  \sum ^{j_0} _ {j=0} (  F_{j,k} (z) -  f_j (z) )w^j \right \|^2 _ {L^2(\Omega,0) } \\
& \leq  &   \frac{ \pi }{ j+1} \sum ^{j_0} _ {j=0} \int _D |   F_{j,k}   -  f_j  | ^2  \delta ^{2\alpha (j+1) }   \\
&< &   C \varepsilon.
\end{eqnarray*}
It follows that
\begin{eqnarray*}
&&  \left \| \widetilde{F}  -f  \right \|^2 _ {L^2(\Omega,0) } \\ 
& = &  \left \| \widetilde{F}  (z,w) - \sum ^{j_0} _ {j=0}  f_j (z) w^j  +   \sum ^{j_0} _ {j=0}  f_j (z) w^j -f  (z,w) \right \|^2 _ {L^2(\Omega,0) } \\
& \leq  &  2 \left \| \widetilde{F}  (z,w)  - \sum ^{j_0} _ {j=0}  f_j (z) w^j   \right \|^2 _ {L^2(\Omega,0) }  +  2 \left \|  \sum ^{j_0} _ {j=0}  f_j (z) w^j -f  (z,w) \right \|^2 _ {L^2(\Omega,0) } \\ 
&\leq &2 ( C +1)\varepsilon.
\end{eqnarray*}
\end{proof}

\begin{proof} [Proof of Theorem \ref{th:Bergmancomplete1}]
The ``if'' part follows directly from  Lemma \ref{lm:exhaustion},  Lemma \ref{lm:approxi} and Lemma \ref{lm:kobayashi}.
 For the other direction,  we prove it by contradiction.  If there is a point $z_0$ which is isolated,  then we may find a small punctured disc $D^*\subset D$ with center at $z_0$,  such that 
 $$
 \Omega \cap  (D^* \times \C )  = \{ (z,w) \in D^*  \times \C:  |w| < (\delta_{D^*} (z)) ^\alpha  = |z|^ \alpha \},
$$
which is not Bergman complete (see \cite{Chen2013}).   Thus $\Omega$ is not Bergman complete in view of the well-known localization lemma of Bergman metric.  The proof is complete.

  \end{proof}

\begin{proof}[  Proof \,of\,  Theorem \ref{th:approxidelta1}]
According to Lemma \ref{lm:exhaustion},  we know that $\Omega$ is Bergman exhaustive.  Combining Lemma \ref{lm:exhaustion}, Theorem \ref{th:deltabijin1}   and the Kobayashi criterion (Lemma \ref{lm:kobayashi}),   we obtain Theorem \ref{th:approxidelta1}.

\end{proof}

\section{Appendix: Examples for Theorem  \ref{cor:Main22} }\label{ex: for Corollary}
 
\begin{proposition}\label{prop:hyindex}
For every bounded hyperconvex domain $\Omega \subset \C^n$,  we have $\alpha(\Omega) \leq 1$.
\end{proposition}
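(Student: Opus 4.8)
The plan is to show that if a bounded domain $\Omega\subset\C^n$ admits a negative continuous psh function $\rho$ with $-\rho\lesssim\delta_\Omega^\alpha$ for some $\alpha>1$, we reach a contradiction by testing against the Ohsawa--Takegoshi / Donnelly--Fefferman machinery, or more elementarily by a direct potential-theoretic estimate near a nice boundary point. First I would record the trivial point that $\alpha(\Omega)$ is always $\ge 0$ and finite for a bounded domain, so the content is the upper bound $\le 1$. Then I would argue locally: since $\Omega$ is hyperconvex (hence bounded pseudoconvex), its boundary is ``large'' enough that one can find a boundary point $p\in\partial\Omega$ and a complex line $\ell$ through $p$ meeting $\Omega$ in a set containing a one-dimensional disc $\Delta$ with $p\in\partial\Delta$ on which $\delta_\Omega$ is comparable to the one-variable boundary distance $\delta_\Delta$. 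Restricting $\rho$ to $\Delta$ gives a negative subharmonic function $u=\rho|_\Delta$ on the disc with $-u\lesssim\delta_\Delta^\alpha$, and now the claim reduces to the one-variable fact that a negative subharmonic function on the unit disc cannot decay to $0$ at a boundary point faster than linearly in the boundary distance — indeed faster than $\delta^1$ is impossible because of the Hopf lemma / sub-mean-value inequality.

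The key one-variable step I would carry out explicitly: on the disc $\D$, suppose $u\le 0$ is subharmonic and $-u(z)\le C(1-|z|)^\alpha$ with $\alpha>1$; evaluate the sub-mean-value inequality on a circle $|z|=1-\epsilon$ against the Poisson kernel at the origin, or better, compare $u$ with the harmonic function having boundary values $0$: since $u$ is $\le 0$ and subharmonic with $u\to 0$ nontangentially at a.e.\ boundary point at rate $\delta^\alpha$, one gets $u\equiv 0$, contradicting $u<0$. A clean way is: fix $r<1$ and apply the Harnack-type inequality to the nonnegative superharmonic function $-u$ on the disc $\Delta(z_0,1-|z_0|)\subset\D$ centered at a point $z_0$ with $|z_0|$ close to $1$; the mean value of $-u$ over a subdisc is at least a constant times $(-u)$ at the center, while the decay hypothesis forces that mean value to be $o(1-|z_0|)$, whereas $-u$ restricted to a fixed interior ball is a fixed positive constant — tracking the geometry of how $\delta_\Delta$ behaves along a radius then yields $\alpha\le 1$.

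The main obstacle, and the step I would spend the most care on, is the \textbf{localization}: producing a boundary point $p$ and a one-dimensional slice on which $\delta_\Omega$ is genuinely comparable (both sides) to a one-variable boundary distance. For a general hyperconvex domain the boundary can be wild, so I would instead argue without slicing: work directly in $\C^n$ with the function $-\rho$, which is positive and ``superharmonic-like'' (it is $-$ of a psh function, so it satisfies $\Delta(-\rho)\le 0$ as a distribution after composing with $\Delta$ on the real $2n$-dimensional space — actually $-\rho$ is superharmonic since $\rho$ is psh hence subharmonic), and apply the sub-mean-value inequality for $\rho$ on Euclidean balls $B(z_0,\tfrac12\delta_\Omega(z_0))\subset\Omega$: this gives $-\rho(z_0)\ge c\,\fint_{B} (-\rho)$, while picking $z_0$ along a segment approaching a boundary point and comparing with a fixed interior lower bound $-\rho\ge c_0>0$ on a fixed ball, together with the bound $-\rho\lesssim\delta_\Omega^\alpha$ along the chain of balls, forces $\alpha\le 1$ by a telescoping/iteration argument in the number of balls needed to reach the interior. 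I expect the proof in the paper is essentially this: a one-dimensional Hopf-lemma-type obstruction transplanted to $\C^n$ via the mean value inequality on shrinking balls.
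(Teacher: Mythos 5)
There is a genuine gap in the argument you actually commit to. The paper's proof is a three-line tangent-ball Hopf lemma, with no slicing and no chain of balls: fix $z_0\in\Omega$ and a nearest boundary point $w_0\in\partial\Omega$, so that $B(z_0,\delta_\Omega(z_0))\subset\Omega$ is internally tangent to $\partial\Omega$ at $w_0$; since $\rho$ is psh, $-\rho$ is a positive (continuous) superharmonic function on this ball, and the classical Hopf lemma (comparison with an explicit barrier on the annulus $\{\delta_\Omega(z_0)/2<|z-z_0|<\delta_\Omega(z_0)\}$) gives $-\rho(z)\ge c\,|z-w_0|$ along the radius from $z_0$ to $w_0$. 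On that radius $\delta_\Omega(z)=|z-w_0|$, so $c\,\delta_\Omega(z)\le -\rho(z)\le C\,\delta_\Omega(z)^\alpha$ forces $\alpha\le 1$. Your localization worry is moot: the nearest-point ball is the only geometry needed, however wild $\partial\Omega$ is. You do mention the Hopf lemma in passing, but the mechanism you actually propose in $\C^n$ is different and does not work.

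Concretely, the chain-of-balls argument based only on the super-mean-value inequality $-\rho(z_0)\ge \frac{1}{|B|}\int_B(-\rho)$ cannot transport the interior bound $-\rho\ge c_0$ out to the boundary: to lower-bound the average over $B(z_0,\tfrac12\delta_\Omega(z_0))$ you need a lower bound for $-\rho$ on a set of positive measure inside that (shrinking) ball, whereas each step of your iteration only produces a lower bound at a single point (the previous center); positive superharmonic functions satisfy no Harnack inequality, and indeed your one-variable claims go the wrong way (for superharmonic $-u$ the average over a subdisc is at most, not at least, a constant times the center value --- think of a Green function pole; likewise $u=|z|^2-1$ shows that a negative subharmonic function tending to $0$ at every boundary point need not vanish, so the rate $\alpha>1$, i.e.\ the Hopf lemma, is exactly what has to be used). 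One could repair the chaining with the weak Harnack inequality for nonnegative superharmonic functions, but then each step loses a fixed factor $\theta<1$ over roughly $\log_2(1/\delta_\Omega)$ steps, yielding only $-\rho\gtrsim\delta_\Omega^{\beta}$ with $\beta=\log_2(1/\theta)$, i.e.\ a non-sharp bound $\alpha(\Omega)\le\beta$ rather than the claimed $\alpha(\Omega)\le 1$. The sharp linear decay from below is precisely the content of the Hopf lemma at an internally tangent ball, and that barrier argument (as in the paper) is the missing ingredient.
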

\begin{proof}
We give a proof by contradiction. Assume $ \alpha(\Omega)> 1$.  Then there exist $\rho \in C(\Omega)  \cap PSH^-(\Omega)$,  $C>0$ and $\alpha>1$ satisfying $-\rho     \leq C\delta_\Omega^\alpha $. Fix a point $z_0\in \Omega$.  We may choose $w_0 \in \partial \Omega$ so that $|z_0 -w_0| = \delta_\Omega(z_0)$.  Applying  the Hopf lemma to the ball $B(z_0, \delta_\Omega(z_0)),$   we have a constant $c>0$ such that 
$$
\limsup _{z \rightarrow w_0} \frac{\rho(z)}{|z -w_0|} \leq -c ,
$$
when $z$ is restricted to the line segment from $z_0$ to $w_0$,  which implies $-\rho(z) \geq c \delta_\Omega (z)$.  This contradicts the initial assumption.
\end{proof}

\begin{proposition}\label{prop:index}
Let $D \subset \R^n$ be a bounded domain.  Set 
$$
\Omega_k : = \{z = x+iy \in D + i \R^n \subset \C^n: k|y|^2 < \delta _D(x)^2    \} ,
 $$
for $k>0$ where 
$$
\delta_D(x) = 
\begin{cases} 
 dist  (x, \partial D) \  &    x \in D; \\
-   dist  (x, \partial D) \  &    x \in D^c. \\
\end{cases}
$$
Then we have  $\alpha(\Omega_k) =1$ if $k\geq 1$.
\end{proposition}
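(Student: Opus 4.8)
The plan is to prove the two inequalities $\alpha(\Omega_k)\le 1$ and $\alpha(\Omega_k)\ge 1$ separately, in both cases working with the single function
$$
r(z):=k\,|\mathrm{Im}\,z|^{2}-\delta_D(\mathrm{Re}\,z)^{2},
$$
so that $\Omega_k=\{z\in D+i\R^{n}:\ r(z)<0\}$. The first and main step is to show that $r$ is plurisubharmonic on $\C^{n}$. The trick is the representation $-\delta_D(x)^{2}=\sup_{w\in\partial D}\bigl(-|x-w|^{2}\bigr)$, valid for every $x$ (note $\delta_D(x)^{2}=\operatorname{dist}(x,\partial D)^{2}$ regardless of the sign convention), which gives $r=\sup_{w\in\partial D}q_{w}$ with $q_{w}(z):=k\,|\mathrm{Im}\,z|^{2}-|\mathrm{Re}\,z-w|^{2}$. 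A direct computation of the Levi form — using $\partial_{z_{j}}\partial_{\bar z_{k}}|\mathrm{Im}\,z|^{2}=\tfrac12\delta_{jk}$ and $\partial_{z_{j}}\partial_{\bar z_{k}}|\mathrm{Re}\,z-w|^{2}=\tfrac12\delta_{jk}$ — shows that the complex Hessian of each $q_{w}$ equals $\tfrac{k-1}{2}$ times the identity, which is nonnegative \emph{precisely} when $k\ge 1$; this is the only place the hypothesis is used. Since $\partial D$ is compact and $(z,w)\mapsto q_{w}(z)$ is jointly continuous, $r=\sup_{w\in\partial D}q_{w}$ is continuous, and a continuous pointwise supremum of psh functions is psh (by the sub-mean-value inequality).

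Granting this, the upper bound is quick. The domain $\Omega_k$ is bounded, since $\mathrm{Re}\,z\in D$ and $|\mathrm{Im}\,z|^{2}<\delta_D(\mathrm{Re}\,z)^{2}/k\le(\operatorname{diam}D)^{2}/k$. Moreover $r$ is a negative continuous psh exhaustion of $\Omega_k$: it is negative on $\Omega_k$ and continuous, and one checks $\partial\Omega_k\subset\{r=0\}$ (a boundary point either satisfies $k|\mathrm{Im}\,z|^{2}=\delta_D(\mathrm{Re}\,z)^{2}$ with $\mathrm{Re}\,z\in D$, or has $\mathrm{Re}\,z\in\partial D$ and $\mathrm{Im}\,z=0$, and in both cases $r=0$), so every sublevel set $\{r<c\}$ with $c<0$ is relatively compact in $\Omega_k$. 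Hence $\Omega_k$ is bounded and hyperconvex, and Proposition~\ref{prop:hyindex} yields $\alpha(\Omega_k)\le 1$.

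For the lower bound I would take $\rho:=r$ itself as the test function in the definition of $\alpha(\Omega_k)$: it is negative, continuous and plurisubharmonic on $\Omega_k$, so it suffices to show $-r\lesssim\delta_{\Omega_k}$. This is an elementary Lipschitz estimate: on the bounded set $\overline{\Omega_k}$ the function $r$ is Lipschitz (the term $|\mathrm{Im}\,z|^{2}$ is smooth, and $\delta_D(\cdot)^{2}$ is Lipschitz on $\overline D$ because $\delta_D$ is $1$-Lipschitz and bounded there), say with constant $L$; if $z^{*}\in\partial\Omega_k$ realizes $\delta_{\Omega_k}(z)=|z-z^{*}|$ then $-r(z)=r(z^{*})-r(z)\le L\,|z-z^{*}|=L\,\delta_{\Omega_k}(z)$. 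Thus $\rho=r$ witnesses $\alpha(\Omega_k)\ge 1$, and together with the previous paragraph $\alpha(\Omega_k)=1$.

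The only genuinely nontrivial point is the plurisubharmonicity of $r$ in the first paragraph, and within that the Levi-form computation that isolates $k\ge 1$ as the exact threshold; there I would want to double-check the sign conventions and that the supremum over $\partial D$ really produces a bona fide (not merely upper semicontinuous) psh function, which is where compactness of $\partial D$ and joint continuity of $q_{w}(z)$ enter. Everything afterwards — boundedness, the exhaustion property, invoking Proposition~\ref{prop:hyindex}, and the Lipschitz estimate — is routine.
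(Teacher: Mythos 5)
Your proof is correct and follows essentially the same route as the paper: hyperconvexity plus Proposition~\ref{prop:hyindex} for the upper bound, the Lipschitz estimate giving $-\rho\lesssim\delta_{\Omega_k}$ for the lower bound, and plurisubharmonicity of $k|y|^{2}-\delta_D(x)^{2}$ obtained by writing $-\delta_D(x)^{2}$ as a supremum of quadratics over $\partial D$. The only (cosmetic) difference is in that last step: the paper isolates the convex-in-$x$ piece $\max_{a\in\partial D}\bigl(|a|^{2}-2\langle a,x\rangle\bigr)$ plus the pluriharmonic term $|y|^{2}-|x|^{2}$, whereas you compute the Levi form of each quadratic $q_{w}$ directly; both identify $k\ge 1$ as the threshold.
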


\begin{remark}
Note that the boundary of $\Omega_k \cap \{ y =0 \} = D\times \{ 0\}$ may be highly irregular!
\end{remark}

\begin{proof} 
Let $\rho (z) : = k|y|^2-  \delta _D(x)^2$,  $z = x+iy$.  Obviously, $\rho$ is a Lipschitz  function and $\rho|_{\partial \Omega_k} =0$.  Thus, we may  find a constant $C_k>0$,  such that
$$
-\rho (z) \leq C_k \delta_{\Omega_k}(z).
$$
Combining this with Proposition \ref{prop:hyindex}, we only need to verify that $\rho \in PSH (\Omega_k)$.  Given $a\in \R^n,$ define
$$
u_a(x) : = -|x-a|^2 + |x|^2 = |a|^2 - 2 \sum _{j=1} ^n a_j x_j.
$$
Since $u_a(x)$ is a convex function and $\delta_D(x) = \min _{a\in \partial D} \{|x -a| \}$,  it follows that
$$
- \delta_D(x)^2 + |x|^2 = \max _{a\in \partial D} \{  u_a(x)   \}
$$
is also convex. Let $h(z) :=  |y|^2 - |x|^2, z= x+iy $.  Note that
\begin{eqnarray*}
\frac{\partial ^2 h(z) }{ \partial z_j \partial \overline{z}_k} &=& \frac{\partial ^2 |y|^2  }{ \partial z_j \partial \overline{z}_k}  - \frac{\partial ^2 |x|^2 }{ \partial z_j \partial \overline{z}_k}   \\ 
& =& \frac1 4 \left ( \frac{\partial ^2 |y|^2  }{ \partial y_j \partial  y _k}  - \frac{\partial ^2 |x|^2 }{ \partial x_j \partial  x _k}  \right )  \\
& =& 0.\\
\end{eqnarray*}
Thus $h(z)$ is psh on $\C^n.$ Hence 
$$
\rho(z) = (k-1) |y|^2 + h(z) + (  |x|^2- \delta_D(x)^2)
$$
is psh on $\Omega_k$ for $k \geq 1$.
\end{proof}

\begin{proposition}
Let 
$$
D_r^* : =  \{(x_1, x_2)\in \R^2  \mid 0 < x_1 ^2 + x_2^2 <r^2 <1\}.
$$ 
Then 
$$
\Omega_k : = \{z = x+iy \in D_r^*  +i  \R^2 \subset \C^2: k|y|^2 < \delta _{D_r^* } (x)^2    \} 
 $$
 is pseudoconvex if and only if $ k \geq 1$.
 
\end{proposition}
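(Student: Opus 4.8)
The plan is to analyze pseudoconvexity of $\Omega_k$ by testing it along complex lines and, in the other direction, by exhibiting a psh defining function. For the "if" direction ($k\ge 1$), I would invoke the previous proposition: the function $\rho(z)=k|y|^2-\delta_{D_r^*}(x)^2$ is psh on $\Omega_k$ when $k\ge 1$ by exactly the argument of Proposition~5.5 (writing $-\delta_{D_r^*}(x)^2+|x|^2=\max_{a\in\partial D_r^*}u_a(x)$ as a supremum of affine functions, hence convex, then adding the psh term $h(z)=|y|^2-|x|^2$ and the nonnegative constant-multiple term $(k-1)|y|^2$). Since $\rho<0$ on $\Omega_k$, $\rho=0$ on $\partial\Omega_k$, and $\rho$ is continuous, $\Omega_k=\{\rho<0\}$ admits a continuous psh exhaustion function, so $\Omega_k$ is pseudoconvex.

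For the "only if" direction, I would argue by contradiction: suppose $0<k<1$ and show $\Omega_k$ fails the Kontinuitätssatz (continuity principle) near a boundary point lying over the puncture $x=0\in\partial D_r^*$. The key geometric point is that $\delta_{D_r^*}(x)=|x|$ for $x$ near $0$ (the nearest boundary point is the origin itself), so near the fiber over $0$ the domain looks like $\{z=x+iy: k|y|^2<|x|^2\}$, i.e. a cone-type Hartogs figure. I would construct an analytic disc, or more simply a family of complex lines, through points of the form $(x,iy)$ with small $|x|,|y|$ whose boundaries stay inside $\Omega_k$ but whose interiors sweep across the puncture $x=0$ where $\delta_{D_r^*}$ drops to $0$; since $k<1$ the admissible cone $\{k|y|^2<|x|^2\}$ is wide enough that such discs exist. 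Concretely, consider the complex line $z(\zeta)=(\varepsilon\zeta,\,i b)$ for a fixed small real vector $b$ and $\zeta$ in a disc in $\mathbb{C}$: on the boundary $|\zeta|=1$ one has $|x|=\varepsilon|\mathrm{Re}\,(\cdot)|$-type control, and I would choose the parametrization so that the boundary circle maps into $\Omega_k$ while the center $\zeta=0$ maps to a point with $x$ in the puncture, contradicting pseudoconvexity via the Kontinuitätssatz. Equivalently, one can test the Levi condition: at a smooth boundary point over $x$ close to $0$, the defining function $k|y|^2-|x|^2$ has a Levi form that is negative in the complexified $x$-direction when $k<1$, so $\partial\Omega_k$ has a strictly pseudoconcave point, forcing $\Omega_k$ to be non-pseudoconvex.

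The main obstacle is making the "only if" direction rigorous in the presence of the puncture and the nonsmoothness of $\delta_{D_r^*}$: one must verify that the relevant test discs genuinely have their boundaries in the \emph{open} set $\Omega_k$ (not just its closure) while their centers escape, and that the identification $\delta_{D_r^*}(x)=|x|$ holds on the precise region swept out. I would handle this by working in a fixed small ball $B(0,\varepsilon_0)$ with $\varepsilon_0<r$ and checking that there $\partial D_r^*\cap B(0,\varepsilon_0)=\{0\}$, so that $\delta_{D_r^*}(x)=|x|$ for all $x\in B(0,\varepsilon_0)\setminus\{0\}$; then the local model $\{k|y|^2<|x|^2\}$ is exact and the cone is genuinely convex/pseudoconcave according to the sign of $k-1$, after which the standard Hartogs-figure argument applies verbatim. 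The borderline case $k=1$ is covered by the "if" direction, and for $k<1$ a direct Levi-form computation at any smooth boundary point with $x\neq 0$ small — where $\rho$ is $C^\infty$ — exhibits a negative eigenvalue, completing the contradiction.
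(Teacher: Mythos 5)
Your proof is in substance the same as the paper's: for $k\ge 1$ you reuse the previous proposition's observation that $\rho(z)=k|y|^2-\delta_{D_r^*}(x)^2$ is psh, negative on $\Omega_k$ and vanishing on $\partial\Omega_k$, hence yields a continuous psh exhaustion; for $k<1$ the step that actually carries the argument is your Levi-form remark, which is exactly the paper's computation that near the puncture $\delta_{D_r^*}(x)=|x|$, so $\rho=k|y|^2-|x|^2=-\frac{k+1}{4}\sum_j(z_j^2+\bar z_j^2)+\frac{k-1}{2}\sum_j|z_j|^2$ has complex Hessian $\frac{k-1}{2}\,\mathrm{Id}<0$, giving strictly pseudoconcave smooth boundary points (any $x\neq0$ small on $\{k|y|^2=|x|^2\}$) and hence non-pseudoconvexity. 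One concrete slip in your preferred Kontinuit\"atssatz route: the disc family $z(\zeta)=(\varepsilon\zeta,ib)$ does not have boundary in $\Omega_k$ — at $\zeta=\pm i$ one gets $x=0$ while $|y|^2=\varepsilon^2+b^2>0$, so the boundary circle leaves even $\overline{\Omega_k}$ — so that construction would need a genuinely different parametrization; since you explicitly fall back on the Levi-form computation, this does not invalidate the proof, but as written the disc argument should be dropped or repaired.
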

\begin{proof}
Let $\rho (z) : = k|y|^2-  \delta _{D_r^* } (x)^2$,  $z = x+iy$.  Then  we have $\delta^2_{D_r^* }  (x) =  x_1 ^2 + x_2^2 $ near $0$.
 Set 
 \begin{eqnarray*}
 \rho (z_1, z_2) : & = &  k (y_1^2 + y_2^2) - (  x_1 ^2 + x_2^2  ) \\  \nonumber
 & = & - \frac{ k + 1 }{ 4} ( z_1 ^2 + \overline{z}_1 ^2 + z_2 ^2 + \overline{z} _2^2  ) +  \frac{k-1}{2}(|z_1|^2  +  |z_2|^2) .  \\  \nonumber
 \end{eqnarray*}
So
  $$
  i \partial  \overline{ \partial} \rho = \frac{k-1}{2}( id z_1  \wedge d \overline{z}_1 +  id z_2 \wedge d \overline{z}_2)<0.
  $$ 
Thus,  $\Omega_k$ is not pseudoconvex at $0$ if $ k <1 $.
 
  \end{proof}
   

To provide some examples of Theorem \ref{cor:Main22},  we now consider a class of Zalcman-type domain $D\subset \R^2.$ 
Let 
\begin{eqnarray} \label{eq:construD}
D= \D \setminus \overline{  \bigcup _{l=1} ^\infty D(x_l,r_l)} 
\end{eqnarray}
with $0< r_l \ll x_l$, where $D(x_l,r_l)$ denotes the disc with center at $x_l$ and radius $r_l$.
We may assume $r_l =  \frac{1}{2^{3l}} $ and  $x_l =\frac{1}{2^l}$ for every $l\geq 1$.
Let  $  t_j : = \frac{1}{2^{j }} $ for each $j \geq 1$.   Define
\begin{eqnarray}\label{eq:eq:foroj}
D^{t_j}= D\left(0,1+\frac{1}{ 2 ^{2 ^{(j/3)}}}\right) \setminus\overline{ \bigcup _{l=1} ^j  D(x_l,\widetilde{ r_l})}
\end{eqnarray}
with $r_l - \widetilde{ r_l} = \frac{1}{ 2 ^{2 ^{(j/3)}}}$ for every $1\leq l \leq j$.
\begin{eqnarray}\label{eq:Dforsup}
\Lambda_j(D):= \sup _{ x \in \partial D}  \delta_{D^{t_j}} (x) &= & x_j - \widetilde{r_j}\\ \nonumber 
  &= &  \frac{1}{2^{j }} -  \frac{1}{2^{3j }}   +  \frac{1}{ 2 ^{2 ^{(j/3)}}}  \\   \nonumber 
  &\leq &  \frac{1}{2^{j }} -  \frac{1}{2^{3j }}   +  \frac{1}{2^{3j }} , \ \    j \gg 1   \\    \nonumber 
    & = &   \frac{1}{2^{j }}   \rightarrow 0,  \ \ \  j\rightarrow \infty. \nonumber 
\end{eqnarray}
On the other hand, 
\begin{eqnarray}\label{eq:Dforinf}
\lambda_j (D): = \inf _{x \in \partial D}  \delta_{D^{t_j} } (x)  &=&    r_j - \widetilde{r_j} \\ \nonumber 
&=&    \frac{1}{ 2 ^{2 ^{(j/3)}}} \\ \nonumber 
  & =   & e^{- \frac{\log 2}{ \left (1/2^j \right)^{1/3} }} : = e^{ {- \frac{\log 2}{  (t_j  )^{1/3} }}}.\\ \nonumber 
\end{eqnarray}
Combining the formula (\ref{eq:Dforsup} )and  (\ref{eq:Dforinf}),  we obtain 
$$
  e^{ {- \frac{\log 2}{  (t_j  )^{1/3} }} } \leq  \delta_{D^{t_j} } (z) \leq t_j,    \ \  j \gg 1
$$
for every $x \in \partial D$.

Now let $D$ and $D^{t_j} $ be as in (\ref{eq:construD}) and (\ref{eq:eq:foroj}) respectively.   Set
$$
\Omega_k = \{z = x+iy \in D+ i\R^2 \subset \C^2: k|y|^2 < \delta _D(x)^2    \}  ,  \  k \geq 1
$$
and
$$
\Omega_{k } ^{t_j } = \{z = x+iy \in D^{t_j} + i\R^2 \subset \C^2: k|y|^2 < \delta _{D^{t_j}} (x)^2    \},    \  k \geq 1 .
$$
We claim that $\Omega_k $  and $ \{ \Omega_{k } ^{t_j }  \} _j$ satisfy Theorem \ref{cor:Main22}.
That is,  there exists some constant $C$ satisfying
$$
  C e^{ {- \frac{\log 2}{  (t_j  )^{1/3} }} } \leq  \delta_{\Omega_{k } ^{t_j }  } (z) \leq t_j,    \ \  j \gg 1
$$
for every $z\in \partial \Omega_k $.  Here $\beta = \frac 1 3 < \frac{\alpha(\Omega_k)}{2} = \frac {1}{2}$ in view of the Proposition \ref{prop:index}.

Put $ \Lambda _j  (\Omega_k) : =\sup _{z \in \partial  \Omega_k  }  \delta_{\Omega_{k } ^{t_j } } (z) $ and $  \lambda _j (\Omega_k): =\inf _{z \in \partial  \Omega_k }  \delta_{\Omega_{k } ^{t_j } } (z)$.   It is sufficient to prove
\begin{eqnarray}\label{eq:con1}
 \Lambda _j  (\Omega_k)  \leq  \Lambda _j  (D)  \leq  t_j,   \ \ \  j \gg 1 
 \end{eqnarray}
 and 
\begin{eqnarray}\label{eq:con2}
  \lambda_j  (\Omega_k)  \geq  \frac{1}{2 \sqrt{k}}e^{ - \frac{\log 2}{  ( t_j   )^{1/3 }  }  } ,   \ \ \  j \gg 1 .
 \end{eqnarray}
Let $z= x+iy\in \partial  \Omega_k $. Then either $x\in D, $  $|y| = \frac{ 1}{\sqrt{k} } \delta_D(x)$ or $x\in \partial D, y=0$.  In both cases, we have $|y| =  \frac{ 1}{\sqrt{k} } \delta_D(x)$.  We first prove (\ref{eq:con1}).  Let $x^* \in \partial D $ with $|x-x^*| = \delta_D(x)$.  Then 
$$
\delta_D(x) \leq \delta_{D^{t_j}} (x) \leq |x-x^*| +  \delta_{D^{t_j}} (x^*) \leq \delta_D(x)  + \Lambda_j(D).
$$ 
 Note that,  for $y^*$ with $|y^*|= \frac{ 1}{\sqrt{k} } \delta_{D^{t_j}} (x)$ we have $z^* = x+iy^* \in \partial \Omega_{k } ^{t_j } $. We may choose $ y^* = \frac{ \delta_{D^{t_j}} (x) }{ \delta_D(x) } y$.  Then 
 \begin{eqnarray*}
\delta_{\Omega_{k } ^{t_j } } (z) & \leq & |z-z^*| \\
&=& |y-y^*|\\
&=&  |y| \cdot \left  |1- \frac{ \delta_{D^{t_j}} (x) }{ \delta_D(x) } \right |   \\
& = &  \frac{1}{ \sqrt{k} } |   \delta_D(x) -   \delta_{D^{t_j}} (x) | \\
& \leq &\frac{  \Lambda_j(D)}{ \sqrt{k} } \\
& \leq &\frac{ t_j}{ \sqrt{k} } \\
& \leq &   t_j  , \ \ \  \text{for every }  \ \ z\in  \partial \Omega_k .
 \end{eqnarray*}
 Hence,  we have 
 \begin{eqnarray*} \label{eq:for1}
 \Lambda_j  (\Omega_k) =  \sup _{z \in \partial \Omega_k }  \delta_{\Omega_{k } ^{t_j } } (z)   \leq  t_j   .
 \end{eqnarray*}
 Next we prove (\ref{eq:con2}). It is sufficient to prove that, for every $z \in \partial \Omega_k$, $z'\in \C^2$ satisfying $|z'-z| < \frac{ \lambda_j(D)}{2} $ implies $z'\in \Omega_{k } ^{t_j } $. Furthermore, it is sufficient to prove that, for every $z=x+iy \in \partial \Omega_k$, $z' = x'+i y' \in \C^2$ satisfying $|x'-x| <\frac{ \lambda_j(D)}{2}$ and $|y'-y| <\frac{  \lambda_j(D)}{ 2 \sqrt{k} } $ implies $z' = x' + iy' \in \Omega_{k} ^{t_j} $.
 We first let $x_j^* \in \partial D^{t_j}$ such that $|x-x_j^* |  = \delta_{D^{t_j}} (x)$. Consider the line segment $l$ connecting $x$ and $x_j^*$, there is a point $x^{**}  $ in $l$ such that $x^{**} \in \partial D$. Thus
 \begin{eqnarray*}
 \delta_{D^{t_j}} (x) &=& |x-x_j^*|\\
  &  =&   |x-x^{**}|  + |x^{**} -x_j^*| \\
 &  \geq  & \delta_D(x) +  \delta_{D^{t_j}}(x^{**})  \\
 &  \geq  &   \delta_D(x)  +  \lambda_j  (D)   .
 \end{eqnarray*}
Then if $|x'-x| < \frac{1}{2}   \lambda_j  (D) $ we have  $x' \in D^{t_j}$.  Furthermore,  we have 
  \begin{eqnarray*}
 \delta_{D^{t_j}} ( x' ) & \geq & \delta_{D^{t_j}} (x) -   |x-x' | \\
 &  \geq  & \delta_D(x) +   \lambda_j  (D)  -r  \\
 & >  &   \delta_D(x)  + \frac{   \lambda_j  (D) }{2}.
 \end{eqnarray*}
 Thus, if $|y'-y| < \frac{  \lambda_j(D)}{ 2 \sqrt{k} }$, then $ |y'|  < \frac{1} {\sqrt{k}}  ( \delta_D(x)  + \frac{ \lambda_j(D)}{2}) = |y| + \frac{  \lambda_j(D)}{ 2 \sqrt{k} }<  \frac{1} {\sqrt{k}}   \delta_{D^{t_j}}(x') $. That is, $z= x'+ i y' \in \Omega_{k } ^{t_j } $.    So
 \begin{eqnarray*} \label{eq:for2}
  \lambda_j(\Omega_k ) \geq \frac{  \lambda_j(D)}{ 2 \sqrt{k} } \geq  \frac{ 1}{ 2 \sqrt{k} }   e^{ {- \frac{\log 2}{  (t_j  )^{1/3} }}} ,    \ \  j \gg 1.
 \end{eqnarray*}
 Hence 
 $$
    \frac{ 1}{ 2 \sqrt{k} }  e^{ {- \frac{\log 2}{  (t_j  )^{1/3} }} } \leq  \delta_{\Omega_{k } ^{t_j }  } (z) \leq t_j,    \ \  j \gg 1
$$
for every $z\in \Omega_k $.

\textbf{Acknowledgements} The third author would like to give thanks to Dr. Yuanpu Xiong for his many valuable discussions  and also to Dr. S$\rm{\acute{e}}$verine Biard for her valuable comments during the writing of the paper.

\end{document}